\documentclass[11pt]{article}

\usepackage[margin=1.1in]{geometry}
\usepackage{amsmath,amssymb,amsthm}
\usepackage{graphicx}
\usepackage{booktabs}
\usepackage{xcolor}
\usepackage[colorlinks=true,linkcolor=blue!60!black,citecolor=blue!60!black,urlcolor=blue!60!black]{hyperref}
\usepackage{microtype}

\graphicspath{{./}}

\newtheorem{theorem}{Theorem}[section]
\newtheorem{lemma}[theorem]{Lemma}
\newtheorem{corollary}[theorem]{Corollary}
\newtheorem{proposition}[theorem]{Proposition}
\newtheorem{conjecture}[theorem]{Conjecture}
\newtheorem{remark}[theorem]{Remark}
\newtheorem{definition}[theorem]{Definition}

\newcommand{\R}{\mathbb{R}}
\newcommand{\supp}{\operatorname{supp}}
\newcommand{\Rey}{\operatorname{Re}}
\newcommand{\lmin}{\lambda^{*}}
\newcommand{\lammin}{\lambda_{\min}}
\newcommand{\Tstar}{T^{*}}
\newcommand{\Tsharp}{T^{\sharp}}
\newcommand{\Aeff}{A_{\mathrm{eff}}}
\newcommand{\fhat}{\hat f}

\title{Weil positivity in compact windows: a finite reduction,\\
certified two-sided bounds, and a Landau--Widom decay law}

\author{%
  \textsc{Xuefeng Zhu}\\[0.6em]
  \normalsize
  School of Mechanics and Aerospace Engineering,\\
  State Key Laboratory of Structural Analysis, Optimization and\\
  CAE Software for Industrial Equipment,\\
  Dalian University of Technology, Dalian 116024, China\\[0.4em]
  \texttt{xuefeng@dlut.edu.cn}%
}
\date{September 3, 2026}

\begin{document}
\maketitle

\begin{abstract}
Weil's criterion states that the Riemann Hypothesis (RH) is equivalent
to the non-negativity of an explicit quadratic form $Q(f)$ on test
functions.  For $f$ supported in a window $[-L,L]$ we study the
normalized infimum $\lmin(L)=\inf_f Q(f)/\|f\|_2^2$ from both sides.

\emph{Lower bounds (unconditional).}  Positivity on bounded support is
classical for $\supp f\subseteq[-\frac{\log2}2,\frac{\log2}2]$
(Yoshida; Connes--Consani by a different method).  We prove, by
certified computation, that $Q(f)\ge8.9\times10^{-18}\|f\|_2^2$ for
all $\supp f\subseteq[-0.8,0.8]$, that is, for autocorrelation support
$1.6$, which is $2.3$ times the classical range.  The proof rests on a
one-stroke reduction: a pointwise envelope for the Weil symbol, with a
comb constant that Weyl equidistribution shows to be optimal, converts
positivity on the whole window into positive semidefiniteness of a
single finite matrix, whose entries converge super-exponentially and
whose coupling to the discarded tail is of size $10^{-100}$.  The same
machinery applied to the odd parity sector shows that the certified
positivity holds for \emph{arbitrary} complex test functions of
support $1.6$, and that the window ground state is simple and even,
with certified two-sided separations; this is precisely the spectral
hypothesis required by the recent operator-theoretic program of
Connes, Consani, Moscovici and van Suijlekom.  We also
report an exploratory computation at support $2.38$, and the cost,
doubly exponential, of making it a certificate.

\emph{Upper bounds (unconditional).}  The same variational upper
bounds are re-evaluated entirely on the geometric side of the
explicit formula---a finite prime sum, a compact archimedean
integral and an explicit pole---in interval arithmetic, with no
appeal to zeros or to RH.  We obtain unconditional certified
variational upper bounds for $\lmin(L)$, down to
$3.2\times10^{-283}$ at $L=2$.  They follow the empirical scaling law
\[
  -\ln \lmin(L) \;\simeq\; 2\pi^2\,\frac{N(\Tstar)}{\ln N(\Tstar)},
  \qquad \Tstar = 2\pi e^{2L},
\]
with $N$ the zero-counting function.  The decay is therefore faster
than any exponential in $L$; the scale $\Tstar$ is the spectral
resolution limit of the window; and $2\pi^2$ matches the Landau--Widom
eigenvalue-plunge rate for time--band limiting operators.  The
one-parameter law fits the four asymptotic points $L=1.4,1.6,1.8,2.0$
with residuals below $0.7\%$.  Out of sample, with both models
calibrated on $L\le1.2$, its residuals are flat ($\approx-5\%$, no
trend), while those of a competing three-parameter ``pure
zero-counting'' model grow monotonically from $+0.8\%$ to $+5.8\%$;
we reject the latter.  The constant is fitted, not derived.  A
conditional theorem complements the
experiments: under RH, $\lmin(L)\le\exp(-Le^L)$ for all large $L$, by
an explicit construction that combines a canonical product over zeros
with a band-limited window.

\emph{Synthesis.}  At $L=0.8$ the two halves meet and enclose the
profile from both sides,
$8.9\times10^{-18}\le\lmin(0.8)\le2.27\times10^{-17}$, with
\emph{unconditional} certificates for both inequalities.  The same computations show why
the positivity route cannot reach RH unassisted.  Any certificate of
the one-stroke type must resolve frequencies up to
$T_1(L)=2\pi e^{A_L}$, where $A_L\sim4e^{L}$ is the mass of the prime
comb; this threshold grows doubly exponentially in $L$, and we show
that no pointwise bound on the comb lowers it.  Meanwhile the
available spectral margin collapses at the matching Landau--Widom
rate.  Passing the resulting wall thus requires cancellation in the
prime comb, a concrete arithmetic problem.  Finally, we document how
float64 evaluations of the geometric side produce spurious negative
eigenvalues, a trap for any numerical study of Weil positivity, and
give a certified remedy.
\end{abstract}

\section{Introduction}\label{sec:intro}

\subsection{Weil's criterion and the window profile}

Let $\Lambda(n)$ denote the von Mangoldt function and let
$\gamma_1\le\gamma_2\le\cdots$ be the ordinates of the non-trivial
zeros of the Riemann zeta function in the upper half-plane.  For an
even, real, continuous test function $f$ with compact support write
$\fhat(r)=\int_{\R}f(u)e^{iru}\,du$.  The Riemann--Weil explicit
formula expresses the sum of $\fhat$ over zeros in terms of arithmetic
and archimedean data, and Weil's criterion \cite{Weil1952,Bombieri2000}
asserts that RH is equivalent to the positivity
\[
  Q(f)\;\ge\;0 \qquad\text{for all admissible } f,
\]
of the associated quadratic form (recalled in
Section~\ref{sec:setup}).  Every spectral approach to RH, from
Hilbert--P\'olya to the trace-formula program of Connes
\cite{Connes1999} and the prolate-operator program of Connes--Consani
and Connes--Moscovici \cite{ConnesConsani2021,ConnesMoscovici2022},
ultimately has to establish some variant of this positivity on larger
and larger spaces of test functions.

Despite the central role of the criterion, its quantitative behaviour
appears not to have been studied.  Positivity is a yes/no statement,
but one may ask how small $Q(f)$ can actually get, relative to
$\|f\|_2^2$, when $f$ ranges over a fixed compact window.  We
therefore consider
\begin{equation}\label{eq:lambdastar}
  \lmin(L)\;=\;\inf_{\substack{f\ \mathrm{even,\ real}\\
      \supp f\subset[-L,L]}}
  \frac{Q(f)}{\|f\|_2^2},
\end{equation}
where $Q$ is evaluated on the autocorrelation $g=f\star\tilde f$,
$\tilde f(x)=\overline{f(-x)}$ (Section~\ref{sec:setup}); under RH the
zeros side gives $Q(f)=\sum_{\gamma>0}2|\fhat(\gamma)|^2\ge0$.  Under
RH one has $\lmin(L)>0$ for every $L$, though this requires a short
argument beyond pointwise positivity, which we supply in
Proposition~\ref{prop:positive}.  Conversely, a negative value of
$\lmin$ for some $L$ would disprove RH, and an unconditional proof of
$\lmin(L)>0$ for a given $L$ is a finite fragment of RH.  The function
$L\mapsto\lmin(L)$ is thus a difficulty profile of the Riemann
Hypothesis as seen through compactly supported test functions: it
quantifies the margin by which positivity holds on each window, and it
is the quantity that any finite-dimensional positivity proof (for
instance in the Connes--Consani program) implicitly has to control.

This paper determines the profile from both sides.  The unconditional
lower bounds, that is, certified positivity theorems, come from a new
finite reduction of the full window problem
(Theorems~\ref{thm:reduction}--\ref{thm:L08}).  The unconditional
upper bounds, certified variational Rayleigh quotients of explicit
trial functions, are evaluated on the geometric side in
interval-closed multiprecision arithmetic
(Section~\ref{sec:pipeline}); a zeros-side pipeline produces the
trial vectors and supplies an independent cross-check.  The numbers
are precise enough to reveal the asymptotic law of the profile
(Section~\ref{sec:results}, Theorem~\ref{thm:main-intro},
Conjecture~\ref{conj:law}).

\subsection{Lower bounds: certified positivity}

Write the geometric side of the explicit formula, for
$\supp f\subseteq[-L,L]$ and $F=\fhat$, as
\begin{equation}\label{eq:Q}
Q(f)\;=\;2F(i/2)^2
+\frac1{2\pi}\int_{\R}|F(t)|^2\,\Psi_L(t)\,dt,
\end{equation}
with the \emph{Weil symbol}
\begin{equation}\label{eq:symbol}
\Psi_L(t)\;=\;\Rey\,\psi\Bigl(\tfrac14+\tfrac{it}2\Bigr)-\log\pi
-\sum_{\log n<2L}\frac{2\Lambda(n)}{\sqrt n}\,\cos(t\log n),
\end{equation}
$\psi=\Gamma'/\Gamma$: because $\supp(f\star\tilde f)\subseteq[-2L,2L]$,
only prime powers with $\log n<2L$ contribute, and \eqref{eq:Q} is
exactly a rank-one pole term plus a frequency multiplier.
Positivity of $Q$ on the class $\supp f\subseteq[-L,L]$ for every $L>0$
is equivalent to RH.  For fixed $L$ it is a finite, unconditional
statement, and the record has stood for three decades: Yoshida
\cite{Yoshida1992} proved positivity for $2L\le\log2$, and
Connes--Consani \cite{ConnesConsani2021} re-proved this range by
trace-formula and von Neumann algebra methods, further exhibiting the
extreme numerical smallness of the form's lower spectral edge.
(Claims beyond $\log2$ exist in unrefereed preprints; we do not rely on
or compare against them beyond noting their existence.)

Our first result removes the infinite-dimensionality of the problem in
one step.  Write $A_L=\sum_{\log n<2L}2\Lambda(n)/\sqrt n$ for the
total mass of the prime comb in \eqref{eq:symbol}.

\begin{theorem}[One-stroke reduction]\label{thm:reduction}
Let $L>0$ and set $T_1:=2\pi e^{A_L}$.  Fix any $\Tsharp$ for which
\[
\beta^*\;:=\;\log\frac{\Tsharp}{2\pi}-\frac1{\Tsharp}-A_L\;>\;0 .
\]
Since the right-hand side increases in $\Tsharp$, this holds for every
$\Tsharp$ above a unique threshold, which exceeds $T_1$ by an
$O(T_1^{-1})$ amount ($120.1$ against $T_1=119.1$ at $L=0.8$); in
particular $\Tsharp>T_1$ is necessary, and $T_1$ is the relevant scale.
Then for every real even $f$ with $\supp f\subseteq[-L,L]$,
\begin{equation}\label{eq:R}
Q(f)\;\ge\;R(f)\;:=\;2F(i/2)^2
+\frac1\pi\int_0^{\Tsharp}\bigl(\Psi_L(t)-\beta^*\bigr)|F(t)|^2\,dt
+\beta^*\|f\|_2^2 .
\end{equation}
Moreover, in the Legendre basis of $L^2[-L,L]$ the quadratic form $R$
is represented by $\beta^*I+2pp^{\mathsf T}+C$ where $C$ has entries
supported, up to explicitly bounded super-exponentially small tails, on
orders $n\lesssim eL\Tsharp/2$.  Consequently, if the basis is cut
after $N$ even modes $0,2,\dots,2N-2$, with the first discarded order
$2N\gtrsim eL\Tsharp/2$: if the leading $N\times N$ block has
$\lammin\ge\lambda_0>0$, then
\[
Q(f)\;\ge\;\bigl(\min(\lambda_0,\;\beta^*-\varepsilon_D)
-\varepsilon_B\bigr)\,\|f\|_2^2
\qquad\text{for all }f\text{ with }\supp f\subseteq[-L,L],
\]
where $\varepsilon_D$ (tail-block deviation) and $\varepsilon_B$
(leading--tail coupling norm) are explicit, super-exponentially small
constants bounded in the proof; in the certified run of
Section~\ref{sec:certified} both are below $10^{-100}$.
\end{theorem}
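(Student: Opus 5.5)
The proof splits into a pointwise bound on the tail of the frequency integral and a matrix analysis of what remains.

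\textbf{Step 1: a pointwise envelope for the symbol.} For the prime comb, $|\cos(t\log n)|\le1$ gives the trivial bound $\sum_{\log n<2L}\frac{2\Lambda(n)}{\sqrt n}\cos(t\log n)\ge -A_L$. For the archimedean term I will use a lower bound of the form
\[
\Rey\,\psi\bigl(\tfrac14+\tfrac{it}2\bigr)\ge\log\tfrac t2-\tfrac1t ,
\]
valid for $t$ beyond a small explicit threshold. It follows from the Stirling expansion of $\psi$ with explicit remainder: $\Rey\psi(\tfrac14+iy)=\log y+O(y^{-2})$, and the $-1/t$ slack absorbs the remainder. Together these give, for $t\ge\Tsharp$,
\[
\Psi_L(t)\ge \log\frac{t}{2\pi}-\frac1t-A_L\ge\beta^* ,
\]
because the middle expression increases in $t$. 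Monotonicity of $\beta^*$ in $\Tsharp$ is immediate. The claimed threshold location comes from solving $\log(\Tsharp/2\pi)=A_L+1/\Tsharp$, which gives $\Tsharp=T_1e^{1/\Tsharp}=T_1+O(1)$; the stated $O(T_1^{-1})$ should be read relative to $T_1$.

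\textbf{Step 2: the inequality $Q\ge R$.} Since $f$ is even, $|F|^2$ is even, so the integral in \eqref{eq:Q} equals $\frac1\pi\int_0^\infty$. On $[\Tsharp,\infty)$ I replace $\Psi_L$ by $\beta^*$ using Step 1. I then add and subtract $\frac{\beta^*}{\pi}\int_0^{\Tsharp}|F|^2$ and apply Plancherel, $\frac1\pi\int_0^\infty|F|^2=\|f\|_2^2$. This yields exactly \eqref{eq:R}.

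\textbf{Step 3: the matrix of $R$.} Let $P_k$ denote normalized Legendre polynomials on $[-L,L]$. Their Fourier transforms are explicit spherical Bessel functions, $\widehat{P_k}(t)\propto i^k\sqrt{L}\,j_k(Lt)$. Hence $F(i/2)^2$ gives the rank-one term $2pp^{\mathsf T}$, with $p_k\propto I_{k+1/2}(L/2)$. The integral gives
\[
C_{jk}=\frac1\pi\int_0^{\Tsharp}(\Psi_L-\beta^*)\,\widehat{P_j}\,\overline{\widehat{P_k}}\,dt .
\]
Finally, $\beta^*\|f\|^2$ becomes $\beta^*I$ by orthonormality. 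For the decay of $C$, I use $|j_k(x)|\le \frac{x^k}{(2k+1)!!}$, which is super-exponentially small once $k\gg ex/2$. With $x\le L\Tsharp$, this makes $|C_{jk}|$ bounded by something like $\|\Psi_L-\beta^*\|_{L^\infty[0,\Tsharp]}\,\Tsharp\,(eL\Tsharp/2k)^{k}$ for $k$ beyond $eL\Tsharp/2$. The sup norm is finite, since $\Psi_L$ has only a $\log$-type growth. The vector $p$ decays even faster.

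\textbf{Step 4: finite certificate, and the main obstacle.} Write $M=\beta^*I+2pp^{\mathsf T}+C$ in block form with leading block $M_{11}$, tail block $M_{22}$, and coupling $B=M_{12}$. Set $\varepsilon_D=\|2p_2p_2^{\mathsf T}+C_{22}\|$, so that $M_{22}\ge(\beta^*-\varepsilon_D)I$, and $\varepsilon_B=\|B\|$. For $x=(x_1,x_2)$, the elementary estimate $x^{\mathsf T}Mx\ge\lambda_0|x_1|^2+(\beta^*-\varepsilon_D)|x_2|^2-2\varepsilon_B|x_1||x_2|$ gives the bound $\min(\lambda_0,\beta^*-\varepsilon_D)-\varepsilon_B$, using $2ab\le a^2+b^2$.

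The main obstacle is bounding $\varepsilon_D$ and $\varepsilon_B$ in operator norm over an infinite tail. This requires summing the entrywise bounds, for instance by a Schur test, $\|C_{22}\|\le\sup_j\sum_k|C_{jk}|$. The factorial decay makes these sums geometric past $2N$. Making the constants explicit is bookkeeping, but it needs care near the transition order $k\approx eL\Tsharp/2$, where the Bessel bound is not yet small. That is why the cut $2N$ must sit comfortably beyond it.
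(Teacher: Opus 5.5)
Your proposal is correct and follows the paper's proof essentially step for step: the envelope on $[\Tsharp,\infty)$ plus Parseval gives $Q\ge R$, the bound $|j_n(x)|\le x^n/(2n+1)!!$ gives the localization, and the two-block inequality $\lammin(M)\ge\min(\lammin(A),\lammin(D))-\|B\|$, with row-sum/Schur-test bounds on the tail, gives the certificate (the paper proves your Stirling step as a separate lemma via Binet's formula, with the explicit threshold $t\ge 15/4<2\pi<\Tsharp$). Your reading of the threshold is also right: it exceeds $T_1$ by about $1$ in absolute terms, i.e.\ by a relative $O(T_1^{-1})$, as the paper's own figures $120.1$ versus $119.1$ confirm.
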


What makes Theorem~\ref{thm:reduction} usable is how little it
contains.  There is no truncation error to track against a spectral
gap, and no splitting of the Hilbert space with $O(1)$ coupling across
an interface.  The frequency split \eqref{eq:R} is an exact direct
sum, every discarded quantity is discarded with a definite sign, and
the coupling between the leading Legendre block and the
super-exponentially small tail is controlled by a single norm bound.
In our experience with the alternatives
(Section~\ref{sec:failures2}), order-space truncations of this problem
always leave $O(1)$ prime-shift coupling across the cut and fail; the
formulation \eqref{eq:R} seems to be the natural one.  The comb
constant $A_L$ in $T_1$ cannot be improved, since the supremum of the
prime comb equals $A_L$ exactly (Lemma~\ref{lem:sharp}); within
pointwise-envelope certificates the threshold $T_1$ is optimal.

Executing the reduction at $L=0.8$ gives:

\begin{theorem}[Certified positivity, support $1.6$]\label{thm:L08}
For every real even $f\in L^2(\R)$ with
$\supp f\subseteq[-0.8,\,0.8]$,
\[
Q(f)\;\ge\;8.9\times10^{-18}\,\|f\|_2^2 .
\]
In particular the Weil form is positive on all autocorrelations
$g=f\star\tilde f$ with $\supp g\subseteq[-1.6,1.6]$.
\end{theorem}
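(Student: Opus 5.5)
We apply Theorem~\ref{thm:reduction} at $L=0.8$, so the whole proof reduces to certified arithmetic on one finite matrix. First we compute the comb mass
\[
A_{0.8}=\sum_{\log n<1.6}\frac{2\Lambda(n)}{\sqrt n}.
\]
Here $e^{1.6}\approx4.95$, so only $n=2,3,4$ contribute. This gives $T_1=2\pi e^{A_{0.8}}\approx119.1$.

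Next we fix $\Tsharp$ slightly above the threshold, say $\Tsharp\approx 121$–$125$. We then verify in interval arithmetic that $\beta^*=\log(\Tsharp/2\pi)-1/\Tsharp-A_{0.8}>0$, which gives an explicit rational lower bound for $\beta^*$. The choice of $\Tsharp$ trades off two things. A larger $\Tsharp$ makes $\beta^*$ bigger. It also pushes the Legendre cutoff $2N\gtrsim e L\Tsharp/2\approx 1.09\,\Tsharp$ higher, so we need roughly $N\approx 70$ even modes.

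The main step is to assemble the $N\times N$ leading block of $\beta^*I+2pp^{\mathsf T}+C$ in the even Legendre basis with rigorous enclosures. We use $\widehat{P_k(\cdot/L)}(t)=2L\,i^k j_k(Lt)$, where $j_k$ is the spherical Bessel function. For even indices $m,n$ the entries are then
\[
C_{mn}=\frac1\pi\int_0^{\Tsharp}\bigl(\Psi_{0.8}(t)-\beta^*\bigr)\,\overline{F_m(t)}F_n(t)\,dt
\]
with normalized $F_m$. The vector $p$ has entries $F_m(i/2)$, which are modified spherical Bessel values. The integrand is analytic on the compact interval $[0,\Tsharp]$. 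It consists of $\Rey\psi(\tfrac14+\tfrac{it}{2})$, three cosines, and products of $j_k$. We enclose each integral by Gauss–Legendre or Taylor-model quadrature in ball arithmetic (Arb), with the remainder bounded rigorously via analyticity on a Bernstein ellipse. We use high working precision, a few hundred digits, because the target eigenvalue is of order $10^{-17}$ and the matrix is very ill-conditioned. As the abstract warns, float64 produces spurious negative eigenvalues.

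To certify $\lammin\ge\lambda_0$ with $\lambda_0\ge 8.9\times10^{-18}+\varepsilon_B$, we use an interval LDL$^{\mathsf T}$/Cholesky factorization of $M-\lambda_0 I$, where $M$ is the interval matrix. Success of the factorization with all pivots certified positive proves that every symmetric matrix in the enclosure satisfies $M\succeq\lambda_0 I$. As an alternative, we could compute an approximate eigendecomposition and apply a Weyl/Gershgorin residual bound. Finally we bound the tail quantities $\varepsilon_D$ and $\varepsilon_B$ by the explicit estimates from the proof of Theorem~\ref{thm:reduction}. These rely on $|j_k(x)|\le x^k/(2k+1)!!$ for $k$ beyond $eL\Tsharp/2$, and we evaluate them numerically to confirm they are below $10^{-100}$. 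Since $\beta^*-\varepsilon_D$ is of order $10^{-2}$ and so exceeds $\lambda_0$, the minimum in Theorem~\ref{thm:reduction} equals $\lambda_0$. This yields $Q(f)\ge 8.9\times10^{-18}\|f\|_2^2$. The statement about autocorrelations follows because $\supp f\subseteq[-0.8,0.8]$ gives $\supp(f\star\tilde f)\subseteq[-1.6,1.6]$.

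We expect the main obstacle to be the certified eigenvalue bound itself. The gap of about $10^{-17}$ relative to entries of size $O(1)$–$O(\Tsharp)$ requires every quadrature enclosure to have width far below $10^{-17}/N$. In practice this means keeping the widths near $10^{-40}$ or smaller. The quadrature of oscillatory Bessel products up to order $\sim140$ over $[0,\Tsharp]$ must therefore be done with rigorous error control at high precision. If the interval Cholesky fails because the enclosures are too wide, we would first raise the precision and the number of quadrature nodes. Next we would subdivide $[0,\Tsharp]$ into panels. Only after that would we increase $N$ or adjust $\Tsharp$.
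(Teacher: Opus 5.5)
Your architecture matches the paper's: one-stroke reduction at $L=0.8$, Bernstein-ellipse quadrature, forbidden-region tails, verified factorization. However, the parameters you commit to cannot produce the constant $8.9\times10^{-18}$, and this is not something precision or quadrature can repair. Whatever the reduction certifies is a lower bound for the reduced form itself, so it can never exceed $\lammin(R_{\Tsharp})$. Moreover, $R_{\Tsharp}$ is pointwise increasing in $\Tsharp$. Write $R_T(f)=2F(i/2)^2+\frac1\pi\int_0^T\Psi_L|F|^2\,dt+\frac{\beta^*(T)}\pi\int_T^\infty|F|^2\,dt$. Then for $T<T'$,
\[
R_{T'}(f)-R_T(f)=\frac1\pi\int_T^{T'}\bigl(\Psi_L-\beta^*(T)\bigr)|F|^2\,dt
+\frac{\beta^*(T')-\beta^*(T)}\pi\int_{T'}^\infty|F|^2\,dt\;\ge\;0
\]
by Lemma~\ref{lem:envelope}. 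With $\Tsharp\in[121,125]$ you get $\beta^*\approx0.008$--$0.04$, so you discard nearly all of $\Psi_L$ beyond $\Tsharp$ (its average size there is about $\log(t/2\pi)\approx3$). The paper's reference spectrum already gives $\lammin(R_{150})=1.356\times10^{-18}$ at $\beta^*=0.224$. Hence $\lammin(R_{\Tsharp})\le1.36\times10^{-18}$ for your choice, and your shifted factorization at $\lambda_0\ge8.9\times10^{-18}$ cannot yield the theorem, however tight the enclosures.

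Your fallback order (precision, then nodes, then $N$, and only last $\Tsharp$) therefore turns the wrong knob. The trade-off you describe is also mis-stated: a larger $\Tsharp$ does not merely enlarge $\beta^*$, it shrinks the loss of the reduction. The paper takes $\Tsharp=200$, $\beta^*=0.5134\ldots$, where $\lammin(R_{200})\ge9\times10^{-18}$ lies within a factor two of the window floor $1.66\times10^{-17}$.

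Your truncation is also too small for the tail claim. At the threshold order $n\approx eL\Tsharp/2$, the factor $(L\Tsharp)^n/(2n+1)!!$ in \eqref{eq:forbidden} behaves like $(eL\Tsharp/2n)^n$ up to polynomial factors, so it is not yet small. For $\Tsharp=125$ and cut order $140$ it is about $4\times10^{-5}$, which gives only $\max_{t\le\Tsharp}|\widehat T_{140}(t)|\lesssim10^{-3}$ from that bound. The $\varepsilon_B$ you could certify would then swamp a $10^{-17}$ margin rather than fall below $10^{-100}$. The bound becomes tiny only well past the threshold. The paper cuts at order $400$ with $L\Tsharp=160$ ($N=200$ even modes, about $1.8$ times the threshold) and obtains $4.3\times10^{-108}$.

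With $\Tsharp=200$ and $N=200$ your plan coincides with the paper's. Your interval LDL$^{\mathsf T}$ is a legitimate substitute for the paper's floating-point Cholesky with residual bound (Lemma~\ref{lem:psd}) plus the quadrature budget $c_{\rm err}$ folded into the shift. Also, $50$ digits of working precision already suffice.
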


The certificate behind Theorem~\ref{thm:L08} ($\Tsharp=200$,
$\beta^*=0.5134\ldots$, $N=200$ even Legendre modes, working precision
$50$ digits) is described in Section~\ref{sec:certified}; its error
budget closes with $24$ orders of magnitude to spare, and an
independent run at $\Tsharp=150$ gives a second, internally monotone
certification.  Its four ingredients are an envelope lemma proved by
classical means, a Bernstein-ellipse quadrature bound, a
forbidden-region tail bound and a verified Cholesky residual.  All are
elementary and fully explicit; there are no anonymous constants.

Section~\ref{sec:L119} reports an exploratory computation at $L=1.19$
(support $2.38$).  An earlier draft of this work claimed a certified
theorem at that support, on the basis of a per-prime ``sharpening'' of
the envelope constant which in fact bounds the prime comb in the wrong
direction; Remark~\ref{rem:nosharp} explains the error, and
Lemma~\ref{lem:sharp} shows that no such sharpening can exist.  We
retract the claim.  We keep the computation, a $950\times950$ matrix at
$70$-digit precision verified positive definite with
$\lammin\in(10^{-48},10^{-46})$, as evidence consistent with
positivity, and Section~\ref{sec:L119} gives the corrected cost of a
genuine support-$2.38$ certificate together with a route that may
bring that cost back down.

Finally, the restriction to \emph{even} test functions in
Theorem~\ref{thm:L08} can be removed.  The Weil form decouples into
parity sectors, with a sign reversal of the pole term in the odd
sector, and the one-stroke reduction applies to each sector
separately.  Section~\ref{sec:parity} certifies
$Q\ge8.2\times10^{-15}$ on the odd sector at support $1.6$, whence
positivity for arbitrary complex $f$ (Corollary~\ref{cor:allf}), and
resolves the parity structure of the window spectrum: the ground
state is simple and even, with certified margins
(Theorem~\ref{thm:parity}).  This is the first of the two missing
spectral hypotheses in the operator program of
Connes--Consani--Moscovici \cite{CCMZeta2025} and
Connes--van Suijlekom \cite{ConnesVanSuijlekom2025}, verified here at
a fixed scale with explicit constants.

\subsection{Upper bounds: the Landau--Widom law}
\label{sec:intro-upper}

The second half of the paper measures how far the certified lower
bounds are from the truth.  We compute certified variational upper
bounds for $\lmin(L)$ for $L\in[0.5,2.0]$ (Table~\ref{tab:main}); they
decay from $10^{-6}$ at $L=0.5$ to $3.2\times10^{-283}$ at $L=2.0$.
The certificates themselves are unconditional: each reported value is
the interval-arithmetic upper endpoint of the geometric-side Rayleigh
quotient $Q(f_L)/\|f_L\|_2^2$ for an explicit trial function $f_L$,
and involves only a finite prime sum, a compact archimedean integral
and an explicit pole.  (A zeros-side assembly is used only to
\emph{propose} the coefficients of $f_L$; the origin of a trial
vector needs no certification for a variational upper bound to be
valid.)  The decay is governed by the \emph{resolution
height}
\[
  \Tstar(L)\;=\;2\pi e^{2L},
\]
the height below which a function band-limited to $[-L,L]$ has enough
oscillation capacity to interpolate zeros of $\zeta$
(Section~\ref{sec:mechanism}).  Writing $N(T)$ for the number of zeros
of height at most $T$, the certified upper bounds are described, with
sub-percent accuracy in the asymptotic regime, by the one-parameter law
\begin{equation}\label{eq:law}
  -\ln\lmin(L)\;=\;C\,\frac{N(\Tstar)}{\ln N(\Tstar)}\,(1+o(1)),
  \qquad C = 20.13\ldots \approx 2\pi^2 .
\end{equation}
The constant is a familiar one: $\pi^2/\ln$ is the Landau--Widom rate
governing the plunge of the eigenvalues of time--band limiting
(prolate) operators beyond the Shannon number
\cite{LandauWidom1980,SlepianPollak1961}.  We singled out the law
\eqref{eq:law} by an out-of-sample test, requiring models calibrated
on the range $L\le1.2$ to predict the new certified values at
$L=1.4,1.6,1.8,2.0$.  What discriminates them is the shape of the
residuals rather than their size.  The Landau--Widom form, with its
single constant, underpredicts by a nearly constant factor
($-4.4\%,-4.9\%,-5.5\%,-4.8\%$), showing no trend; the offset is the
finite-size drift of the constant, which has not yet saturated at
$L\le1.2$.  The competing three-parameter ``one nat per zero'' model
$-\ln\lmin\propto N(\Tstar)$ errs instead by
$+0.8\%,+2.0\%,+3.7\%,+5.8\%$, monotonically growing, which is the
signature of a wrong functional form; we reject it
(Section~\ref{sec:results}).

One caveat applies throughout: what is certified is that each
displayed value is an upper bound for $\lmin(L)$, and in principle
$\lmin(L)$ could be smaller.  Identifying the law \eqref{eq:law} with
$\lmin$ itself rests on two further things.  One is structural: the
sine basis of Section~\ref{sec:pipeline} is complete in $L^2(-L,L)$,
so the Galerkin values decrease to $\lmin(L)$ as the dimension grows,
and we observe stabilization at the $0.03\%$ level under dimension
refinement.  The other is the certified lower bound of
Theorem~\ref{thm:L08}, which pins the true value to within a factor
$2.6$ at $L=0.8$ (Section~\ref{sec:synthesis}).  We formalize the
identification, together with the constant, as
Conjecture~\ref{conj:law}, and flag the distinction wherever it
matters.

We complement the experiments with a conditional theorem, proved in
Section~\ref{sec:theorem}, which captures the qualitative content of
\eqref{eq:law} with the exponent $e^{L}$ in place of $e^{2L}$.

\begin{theorem}\label{thm:main-intro}
Assume RH.  There is an $L_0$ such that for all $L\ge L_0$,
\[
  \lmin(L)\;\le\;\exp\!\bigl(-L\,e^{L}\bigr).
\]
\end{theorem}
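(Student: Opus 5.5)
Under RH the zeros side of the explicit formula gives, for real even $f$ with $\supp f\subseteq[-L,L]$ and $F=\fhat$,
\[
Q(f)=\sum_{\gamma>0}2|F(\gamma)|^2 ,
\]
so it suffices to exhibit one trial function $f_L$ with $\|f_L\|_2=1$ (up to constants) and $\sum_{\gamma>0}|F_L(\gamma)|^2\le\exp(-Le^L)$. The construction is the one announced before the statement: a band-limited window multiplied by a canonical product over zeros. By Paley--Wiener, $F$ is entire of exponential type $L$, square-integrable on $\R$, and even. Fix a height $T$ and let $Z_T=\{\gamma: 0<\gamma\le T\}$, of cardinality $N(T)\sim \frac{T}{2\pi}\log\frac{T}{2\pi}$. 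Take
\[
F_L(t)=\prod_{\gamma\in Z_T}\Bigl(1-\frac{t^2}{\gamma^2}\Bigr)\,W(t),
\]
where $W$ is the Fourier transform of a smooth even bump of support $[-\eta L,\eta L]$ raised to a high power, so that $W$ decays like $\exp(-c\,|t|/\log|t|)$ or faster (a Beurling--Malliavin type, or Ingham-type, window). The polynomial factor has degree $2N(T)$ and contributes no exponential type, so $F_L$ has type $\eta L\le L$ and $f_L$ is supported in $[-L,L]$.

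The key steps, in order, are as follows.

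First, $F_L(\gamma)=0$ for $\gamma\le T$, so only zeros with $\gamma>T$ contribute.

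Second, for $\gamma>T$ bound the product by $\prod(\gamma^2/\gamma_j^2)\le \exp\bigl(2\int_0^T\log(\gamma/u)\,dN(u)\bigr)$, using RH only to know all zeros are on the line, so that $F$ is real there and the zeros side is a sum of squares. Then bound $|W(\gamma)|$ by the window decay. The sum over $\gamma>T$ is controlled by choosing $T$ so that the window decay beats the polynomial growth. The window must lose about $2N(T)\log(\gamma/T)$ nats, and a band-limited window of type $\eta L$ can decay like $\exp(-\eta L\,\gamma/\omega(\gamma))$ with $\omega$ slowly growing.

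Third, the normalization. $\|f_L\|_2^2=\frac1{2\pi}\int|F_L|^2$ is dominated by $|t|\lesssim T$, where the product vanishes at the zeros but between them has size at least $\exp(-O(N(T)))$ relative to its maximum. I would instead normalize by comparing $\int|F_L|^2$ with $|F_L(t_0)|^2$ at a point $t_0$ of order $T$ where it is large. An alternative is to bound the ratio directly, since what matters is $\sum_{\gamma>T}|F_L(\gamma)|^2\big/\int|F_L|^2$.

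Fourth, optimize $T$. Balancing suggests $T\asymp e^{L}$, giving a ratio $\exp(-c\,N(T))$ with $N(T)\asymp Le^{L}$ (since $\log T\approx L$). This yields the claimed $\exp(-Le^L)$ for large $L$ after absorbing constants, which is why the exponent is $e^{L}$ rather than the heuristic $e^{2L}$.

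The main obstacle is the third and fourth steps together: showing that the tail mass at $\gamma>T$ is exponentially smaller, by a factor $e^{-cN(T)}$, than the $L^2$ mass in $[-T,T]$, uniformly, with constants good enough to get coefficient $1$ in the exponent. I would try first to use Jensen/Carleman-type estimates for entire functions of exponential type. These would lower-bound $\int|F_L|^2$ by $|F_L(iy)|^2$-type quantities, and the explicit formula for $\log|\prod(1+y^2/\gamma^2)|$ via $N(u)$ evaluates them. Slack in the choice of $\eta$ and $T=e^{L}/C$ is then available to absorb all constants.
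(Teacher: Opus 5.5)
Your skeleton matches the paper's: the zeros side under RH, a canonical product $B$ over the ordinates up to $T$, a window of type $\le L$, and a tail over $\gamma>T$ measured against the $L^2$ mass. But the two steps that carry the quantitative content fail as stated.

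\textbf{The window.} By your own product bound, at $\gamma=xT$ you get $\ln|B(\gamma)|\le 2N(T)\ln x+2\int_0^T N(u)\,\frac{du}{u}\approx 2N(T)(1+\ln x)$. For $T\approx2\pi e^{L}$ this is about $\frac{LT}{\pi}(1+\ln x)$. So the window must already pay about $2N(T)$ nats at $\gamma\approx T$, not $2N(T)\log(\gamma/T)$. A scale-free Ingham/Beurling--Malliavin window with decay $\exp(-\eta L\gamma/\omega(\gamma))$ supplies only $\eta LT/\omega(T)$ nats there. That falls short by the unbounded factor $\omega(T)$, so the tail is not even small. The window has to be tuned to the scale $T$. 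The paper takes $h(r)=\bigl(\sin(Lr/m)/(Lr/m)\bigr)^m$ with $m=\lceil LT/e\rceil$. This gives $\ln|h(xT)|\le-\frac{LT}{e}\ln(ex)+2$ for $x\ge1$, and only polynomial decay of degree $m>2N(T)+2$ beyond, which is all that integrability requires.

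\textbf{The constants.} They cannot be ``absorbed.'' The target has coefficient exactly $1$ in front of $Le^{L}$, and the exponent you obtain is $c\,LT$ with $T\asymp e^{L}$, so $c$ must actually be computed. Your proposed slack ($T=e^{L}/C$, $\eta<1$) moves the wrong way, because the gain is proportional to $T$ and to the type. Take your crude estimate $|1-\gamma^2/\gamma_j^2|\le\gamma^2/\gamma_j^2$ with the tuned window at $T=2\pi e^{L}$. Then $\ln|\fhat_0(xT)|\le-(\frac1e-\frac1\pi)LT(1+\ln x)+O(L^2+L\ln x)$, which yields only $\lmin(L)\le\exp\bigl(-(\frac{4\pi}{e}-4)Le^{L}\bigr)=\exp(-0.62\,Le^{L})$.

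The paper instead keeps the subtraction in the potential. It proves $\ln B(xT)\le\nu(T)T\,\bar g(x)+O(L(L+\ln 2x))$, where $\bar g(x)=\int_0^1\bigl(\ln\frac{x^2-u^2}{u^2}\bigr)_+du$ and $\bar g(1)=\ln(3+2\sqrt2)<2$. This step needs $T$ placed in a gap of the ordinates, $\mathrm{dist}(T,\{\gamma_j\})\ge c_0/L$. It then shows that $F(x)=\bar g(x)/2\pi-\ln(ex)/e$ is maximal at $x=1$, giving the constant $\frac{4\pi}{e}-2\ln(3+2\sqrt2)=1.097>1$.

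The real slack lies in the opposite direction. Multiplying $T$ by a constant changes $\nu(T)$ only by $O(1)$, so with your crude bound $T=4\pi e^{L}$ already gives $\frac{8\pi}{e}-8=1.25>1$, and no gap condition is needed.

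\textbf{The normalization.} This is what you flag as the main obstacle, but it is immediate. $B(0)=1$, and on $[0,1]$ one has $\ln B\ge-0.0234$ from $\sum_{\gamma>0}\gamma^{-2}<0.0232$ (under RH). Also $h\ge e^{-L^2/4m}\to1$ there. Hence $\|f_0\|_2^2\ge\frac14$. Comparing with $|F_L(t_0)|$ at $t_0\sim T$ points the wrong way, since the correctly built $F_L$ is exponentially small there.
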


The proof is an explicit construction: a finite canonical product over
the zeros up to $T=2\pi e^{L}$, multiplied by a power-of-sinc window
that exhausts the exponential-type budget $L$.  The analysis makes the
mechanism behind $\Tstar$ transparent, in that the potential-theoretic
cost of the product exactly balances the achievable decay of a type-$L$
window.  It also locates the obstruction to reaching the full exponent
$e^{2L}$ in a precise extremal problem for band-limited windows, the
same problem whose solution is governed by the Landau--Widom
asymptotics.  In this sense the theorem and the measured constant
$2\pi^2$ in \eqref{eq:law} corroborate each other.

\subsection{Synthesis: a two-sided enclosure and the barrier}
\label{sec:intro-synthesis}

Where the two programmes overlap they enclose the profile of RH from
both sides, with certificates on both sides:
\[
  8.9\times10^{-18}\;\le\;\lmin(0.8)\;\le\;2.27\times10^{-17},
\]
a factor $2.6$.  To our knowledge $\lmin$ is the first RH-equivalent
quantity whose window profile has been enclosed from both sides by
certified computation.  The enclosure also measures the loss of the
reduction: the reduced form $R$ of \eqref{eq:R}, which discards the
entire excess of $\Psi_L$ beyond $\Tsharp$, still reaches to within an
order of magnitude of the true window floor.  Conversely, the law
\eqref{eq:law} calibrates the certification effort, since it predicts
in advance the scale of $\lammin$ of a planned matrix and hence the
working precision required; Section~\ref{sec:L119} records this
interplay.

The same computations quantify why the positivity route, unassisted,
cannot reach RH:

\begin{theorem}[Barrier for pointwise-envelope certificates]
\label{thm:barrier}
Any application of Theorem~\ref{thm:reduction} requires
$\Tsharp>T_1(L)=2\pi e^{A_L}$, and $A_L=(4+o(1))e^{L}$ as $L\to\infty$
by the prime number theorem; hence the matrix size $N\asymp LT_1$ and
the number of quadrature nodes grow doubly exponentially in $L$.
Moreover $\sup_t$ of the prime comb equals $A_L$ exactly
(Lemma~\ref{lem:sharp}), so within the class of certificates that bound
the comb pointwise the threshold $T_1$ cannot be lowered.
\end{theorem}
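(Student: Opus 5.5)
The statement has three parts, and I would prove them in the order they appear: the necessity of $\Tsharp>T_1$, the asymptotics of $A_L$, and the sharpness of the comb supremum.

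\emph{Necessity of $\Tsharp>T_1$.} This is the monotonicity remark already recorded in Theorem~\ref{thm:reduction}. Applying the reduction requires $\beta^*>0$. Since $1/\Tsharp>0$, the condition $\beta^*>0$ gives $\log(\Tsharp/2\pi)>A_L$, that is, $\Tsharp>2\pi e^{A_L}=T_1$.

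\emph{Asymptotics of $A_L$.} Write $x=e^{2L}$, so that $A_L=2\sum_{n<x}\Lambda(n)n^{-1/2}$. By the prime number theorem in the form $\psi(x)=x+o(x)$, partial summation gives
\[
\sum_{n<x}\Lambda(n)n^{-1/2}=\psi(x)x^{-1/2}+\tfrac12\int_2^x\psi(u)u^{-3/2}\,du=2\sqrt x\,(1+o(1)).
\]
Hence $A_L=(4+o(1))\sqrt x=(4+o(1))e^{L}$.

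\emph{Growth of $N$ and the node count.} By the proof of Theorem~\ref{thm:reduction}, the truncation order must satisfy $2N\gtrsim eL\Tsharp/2$, which gives $N\asymp L\Tsharp\ge LT_1$. For the nodes, the integrand $|F(t)|^2\Psi_L(t)$ on $[0,\Tsharp]$ oscillates at frequencies up to $\log n<2L$ from the comb and up to $2L$ from $|F|^2$. Resolving it therefore needs $\gtrsim L\Tsharp$ nodes. Combined with the previous step, both quantities are at least of order $L\exp\bigl((4+o(1))e^L\bigr)$, which is doubly exponential.

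\emph{Sharpness of the comb supremum (Lemma~\ref{lem:sharp}).} The upper bound $\bigl|\sum 2\Lambda(n)n^{-1/2}\cos(t\log n)\bigr|\le A_L$ is the triangle inequality. For the reverse inequality, only finitely many primes $p_1,\dots,p_k$ with $\log p_j<2L$ occur, and $\log p_1,\dots,\log p_k$ are linearly independent over $\mathbb{Q}$ by unique factorization. Kronecker's theorem (the Weyl equidistribution of $t(\log p_1,\dots,\log p_k)$ mod $2\pi$) then gives, for any $\epsilon>0$, a $t$ with every $t\log p_j$ within $\epsilon$ of $\pi$ mod $2\pi$. For a prime power $p^m$ we have $\cos(t\,m\log p)=\cos(m\,t\log p)$, so these cosines are within $m\epsilon$ of $\cos(m\pi)=(-1)^m$, not of $-1$.

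This means driving all primes to the phase $\pi$ only makes the prime terms $-1$. To make every term have the same sign I would instead drive the phases toward $0$, which makes every cosine close to $+1$. The comb then tends to $A_L$, so its supremum equals $A_L$ and is not attained. Whether this is the "relevant" sign depends on the paper's orientation: the comb enters $\Psi_L$ with a minus sign, so the phases near $0$ are exactly where $\Psi_L$ is smallest, and that is the side the envelope must bound.

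\emph{Optimality within the envelope class, the main obstacle.} The final claim needs care in saying what "within the class" means. My plan is to argue that any pointwise bound $c$ for the comb used in the $\beta^*$ construction must satisfy $c\ge\sup_t(\text{comb})=A_L$. Otherwise the envelope inequality fails at Kronecker times $t$, which can be taken arbitrarily large. So no admissible $c$ smaller than $A_L$ exists, and the threshold $2\pi e^{c}$ cannot drop below $T_1$. The main obstacle is making precise the class of "pointwise-envelope certificates." I would formalize it as any certificate that replaces the comb by a constant bound valid for all $t\ge\Tsharp$, and then the Kronecker argument closes it.
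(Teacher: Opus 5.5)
Your proposal is correct and follows the paper's proof essentially step for step: $\beta^*>0$ forces $\Tsharp>T_1$, the prime number theorem gives $A_L\sim4e^{L}$ (you make this explicit by partial summation), and aligning the phases $t\log p$ near $0$ at arbitrarily large $t$ by Weyl/Kronecker shows that no constant bound on the comb below $A_L$ can hold on $[\Tsharp,\infty)$. One small slip: the supremum is attained, at $t=0$ (as Lemma~\ref{lem:sharp} notes), and it is only at large $t$ that it is approached without being attained (once two distinct primes occur in the comb), which is all the envelope argument needs.
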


Theorem~\ref{thm:barrier} is unconditional and is proved in
Section~\ref{sec:barrier}.  Two statements of a different logical
character belong beside it, and we keep them outside the theorem: the
first is measured rather than proved, the second is interpretation.

\begin{remark}[The margin to be resolved; measured, not proved]
\label{rem:barrier-rate}
The margin any certificate must resolve collapses at the
Landau--Widom rate: by the certified upper bounds of
Section~\ref{sec:results} and the law \eqref{eq:law},
$-\ln\lmin(L)\simeq2\pi^2 N(\Tstar)/\ln N(\Tstar)$ with
$\Tstar=2\pi e^{2L}$, so that the floor falls doubly exponentially in
$L$: from $10^{-17}$ at $L=0.8$ through $10^{-48}$ at $L=1.2$ to
$10^{-283}$ at $L=2$.  The status of this statement is that of
\eqref{eq:law} itself, namely a law fitted to certified upper bounds;
converting it into a proved lower bound on the cost of an arbitrary
positivity certificate is open (Section~\ref{sec:barrier}).
\end{remark}

\begin{remark}[What the threshold means]\label{rem:barrier-interp}
$T_1$ is the last frequency at which the prime comb, in the worst case
of phase alignment, is still able to drive $\Psi_L$ to zero.  (The
supremum $A_L$ is attained only in the limit, so the actual last sign
change of $\Psi_L$ lies somewhat below $T_1$; what
Lemma~\ref{lem:sharp} excludes is any pointwise argument that lowers
the threshold.)  Certifying positivity past support $\approx3.2$
($T_1\approx10^7$) by any pointwise-envelope method is therefore
computationally void.  To improve on pointwise envelopes one must prove
that the phases $(t\log p)_p$ cannot align, an arithmetic statement
about simultaneous Diophantine approximation of $\{\log p\}$, of the
same nature as the information carried by the zeros themselves.
\end{remark}

Under RH, $Q\ge0$ at every support, so an optimist will find
Theorem~\ref{thm:L08} unsurprising.  Its content lies elsewhere, in
unconditionality, explicitness and cost: it locates how much certified
positivity a given amount of computation buys, on the one route to RH
where partial progress is even well defined.  The reduction theorem
makes the certificates easy to check, since checking one means
factoring one matrix.  The barrier theorem shows that the route closes
doubly exponentially fast, which turns the folklore that ``positivity
gets exponentially hard'' into measured constants and a specific
arithmetic obstruction.  Certificate, law and barrier are what we take
this program to deliver: the frontier moves from $\log2$ to $1.6$, the
profile beyond it is measured out to $L=2$, and the wall behind both is
mapped rather than left implicit.

\subsection{Related work}

The tradition of attacking RH-adjacent quantities numerically is best
represented by the de Bruijn--Newman constant $\Lambda_{dBN}$, whose
lower bounds were improved over decades of computation before the
proof of $\Lambda_{dBN}\ge0$ by Rodgers and Tao \cite{RodgersTao2020}
(see also \cite{Polymath15} for the current upper bound); our study is
in the same spirit, applied to a different, and to our knowledge
previously unmeasured, RH-equivalent quantity.  On the theoretical
side, Weil positivity for restricted classes of test functions has
been established by Connes--Consani using prolate spheroidal wave
functions \cite{ConnesConsani2021}; the appearance of the
Landau--Widom constant in \eqref{eq:law} gives a quantitative bridge
to that circle of ideas, and Theorem~\ref{thm:L08}
extends the range of unconditional positivity itself.  Very recently,
finite-rank truncations of the Weil form have become objects of
study in their own right: Connes--van Suijlekom
\cite{ConnesVanSuijlekom2025} and Connes--Consani--Moscovici
\cite{CCMZeta2025} construct explicit Galerkin matrices at a prime
cutoff, Suzuki \cite{Suzuki2026} develops a parallel screw-function
treatment, and Groskin \cite{Groskin2026} proves two-sided
certification rules for such truncations, including an explicit
``inconclusive band'' of spurious negative eigenvalues caused by the
archimedean cutoff, a phenomenon close to the one we diagnose in
Section~\ref{sec:failures}.  Those works study a fixed
truncation family at fixed cutoffs; the window-variational infimum
\eqref{eq:lambdastar}, its unconditional certification, and its
asymptotic law in $L$ appear not to have been considered before.
The program of \cite{CCMZeta2025} moreover isolates two missing
spectral steps, the first of which (a simple, even ground state at a
finite scale) is exactly a two-sided eigenvalue-separation problem for
the window form; Section~\ref{sec:parity} settles it at $\lambda=e^{0.8}$
with certified constants.  The
extremal-problem viewpoint on the explicit formula is developed in the
Fourier-optimization literature, e.g.\
Carneiro--Chandee--Milinovich \cite{CCM2013}; the variational problem
\eqref{eq:lambdastar} may be viewed as an $L^2$-normalized member of
that family.  The certified-computation ingredients are standard: our
PSD verification is the approximate-factorization-plus-residual
argument (cf.\ \cite{Rump2010}), and the interval closure uses
\texttt{mpmath} \cite{mpmath}.  Odlyzko-style zero data enter only as a proposal engine and as
\emph{validation} of the geometric certificates
(Remark~\ref{rem:zerostatus}); they never enter the unconditional
proofs.

\subsection{Notation}

$\fhat(r)=\int f(u)e^{iru}\,du$; outside Section~\ref{sec:parity}, all
test functions are real and even, so $\fhat$ is real and even.  $\lmin(L)$ is the window infimum
\eqref{eq:lambdastar}; $\lammin(M)$ is the least eigenvalue of a
matrix $M$.  $\Tstar=2\pi e^{2L}$ is the resolution height of the
window (upper-bound half); $\Tsharp$ is the frequency-split point of
the one-stroke reduction (lower-bound half); $T_1=2\pi e^{A_L}$ is
the comb-alignment threshold, with
$A_L=\sum_{\log n<2L}2\Lambda(n)/\sqrt n$ the mass of the prime comb.  $N(T)$ counts zeros of $\zeta$ with
$0<\gamma\le T$, and $\nu(t)=\frac1{2\pi}\ln\frac t{2\pi}$ is the
density of zeros at height $t$.  $\bar P_n$ denotes the Legendre
polynomial normalized to $\int_{-1}^1\bar P_n^2=1$; the basis of
$H=L^2_{\mathrm{even}}[-L,L]$ is $T_n(x)=\bar P_n(x/L)/\sqrt L$ for
even $n$, with Fourier transforms
\begin{equation}\label{eq:Tn}
\widehat T_n(t)=(-1)^{n/2}\,2\sqrt{L\nu_n}\;j_n(tL),
\qquad \nu_n=n+\tfrac12,
\end{equation}
$j_n$ the spherical Bessel function.  Throughout, ``dps'' denotes
decimal digits of working precision.

\section{The Weil form: two sides, one variational problem}
\label{sec:setup}

For $f$ as above, with $g=f\star\tilde f$, the geometric side of the
explicit formula (normalization of \cite{Bombieri2000}; see also
\cite[Ch.~5]{IK2004} and Barner \cite{Barner1981}) gives \eqref{eq:Q},
an exact identity for $\supp f\subseteq[-L,L]$: a rank-one pole term
plus the multiplier $\Psi_L$ of \eqref{eq:symbol}.  The explicit
formula identifies this with the zeros side,
\begin{equation}\label{eq:zerosside}
  Q(f)\;=\;\sum_{\rho}\widehat g\Bigl(\frac{\rho-\frac12}{i}\Bigr)
  \;\overset{\mathrm{RH}}{=}\;
  \sum_{\gamma>0} 2\,\bigl|\fhat(\gamma)\bigr|^{2},
\end{equation}
and $Q(f)\ge0$ for all admissible $f$ if and only if RH holds
\cite{Weil1952}.  The lower-bound half of the paper
(Sections~\ref{sec:envelope}--\ref{sec:L119}) works exclusively with
the geometric side \eqref{eq:Q}.  The upper-bound half
(Sections~\ref{sec:pipeline}--\ref{sec:results}) evaluates the
\emph{same} geometric side at explicit trial functions, in interval
arithmetic deep enough to resolve the cancellation; the resulting
variational upper bounds are therefore unconditional.  A zeros-side
pipeline, conditional on RH and on the correctness of the
multiprecision zero values, is retained as a proposal engine and as
an independent cross-check
(Remark~\ref{rem:zerostatus}, Section~\ref{sec:ivclosure}).

Two structural remarks underlie everything that follows.

\begin{remark}[Positivity vs.\ cancellation]\label{rem:sides}
The zeros side \eqref{eq:zerosside} is a sum of non-negative terms;
the geometric side \eqref{eq:Q} computes the same tiny number as a
difference of large terms.  When $Q(f)\approx10^{-20}\|f\|^2$ and the
individual blocks are of order $\|f\|^2$, double-precision evaluation
of the geometric side is meaningless.  This elementary observation,
quantified in Section~\ref{sec:failures}, once suggested that upper
bounds had to be computed on the zeros side.  Multiprecision interval
arithmetic removes the obstruction: Section~\ref{sec:pipeline}
certifies geometric-side Rayleigh quotients down to
$10^{-283}$, matching (and typically slightly sharpening) the
zeros-side proposals.  Lower bounds continue to require one-sided,
sign-definite estimates on the geometric side.
\end{remark}

\begin{remark}[Variational upper bounds]\label{rem:variational}
For any finite-dimensional subspace $V$ of admissible test functions,
the minimal eigenvalue of the restriction of $Q$ to $V$ (with respect
to the $L^2$ Gram matrix) is an upper bound for $\lmin(L)$.  All
numbers in Table~\ref{tab:main} are of this form; no claim of
convergence to the true infimum is needed for the upper bounds to be
rigorous.
\end{remark}

Positivity of the infimum under RH needs a proof: in an
infinite-dimensional space, $Q(f)>0$ for every $f\ne0$ does not by
itself exclude $\inf_{\|f\|=1}Q(f)=0$.  What rules this out is a
sampling inequality for the Paley--Wiener space, available because the
zeros are, conditionally, dense at every sufficiently large height.

\begin{proposition}[The infimum is positive under RH]\label{prop:positive}
Assume RH.  For every $L>0$ there is $c_L>0$ such that
$Q(f)\ge c_L\|f\|_2^2$ for all admissible $f$ supported in $[-L,L]$;
in particular $\lmin(L)>0$.
\end{proposition}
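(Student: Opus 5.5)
Under RH the zeros side \eqref{eq:zerosside} gives $Q(f)=2\sum_{\gamma>0}|\fhat(\gamma)|^2$, counted with multiplicity. So it suffices to prove a \emph{sampling} (lower Riesz/Plancherel--P\'olya) inequality for the Paley--Wiener space $PW_L$ of entire functions of exponential type at most $L$ that are square-integrable on $\R$:
\[
\sum_{\gamma>0}|F(\gamma)|^2\;\ge\;c\int_{\R}|F(t)|^2\,dt\qquad(F=\fhat),
\]
and then invoke Plancherel, $\int|F|^2=2\pi\|f\|_2^2$. Because $f$ is real and even, $F$ is even. Hence $\sum_{\gamma>0}|F(\gamma)|^2=\frac12\sum_{\gamma}|F(\gamma)|^2$, where the second sum runs over the symmetric set $\Gamma=\{\pm\gamma\}$ of real ordinates. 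The task is therefore to show that $\Gamma$, or a suitable subset of it, is a sampling set for $PW_L$.

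The first step is to extract a separated subset $\Gamma'\subset\Gamma$ of large lower density. The Riemann--von Mangoldt formula gives $N(T+h)-N(T)=\frac{h}{2\pi}\log\frac{T}{2\pi}+O(\log T)$. Under RH the error improves to $O(\log T/\log\log T)$, so every interval $[T,T+1]$ with $T\ge T_0$ contains $\gg\log T$ ordinates. I would partition $[T_0,\infty)$ into intervals of length $\delta$ and pick one ordinate in each interval with a fixed parity of index. This yields a uniformly separated set with gaps between $\delta$ and $3\delta$ on $|t|\ge T_0$, which is possible once $\log T_0\gg 1/\delta$. The main obstacle is that $\Gamma$ has \emph{no} points in $(-T_0,T_0)$, in particular none in $(-\gamma_1,\gamma_1)$. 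Beurling's density theorem concerns uniform density and tolerates a bounded gap, but I want an explicit, elementary argument.

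I would handle the gap by splitting $F$ rather than quoting Beurling directly. By Kadec/Beurling, any set that is $\delta$-dense on all of $\R$ with $\delta<\pi/L$ is sampling for $PW_L$. So I complete $\Gamma'$ by adding finitely many auxiliary points $\Lambda_0\subset(-T_0,T_0)$, with spacing $\delta$, to get a sampling set $\Gamma'\cup\Lambda_0$. This gives
\[
\|F\|_2^2\le C\Bigl(\sum_{\Gamma'}|F|^2+\sum_{\Lambda_0}|F|^2\Bigr).
\]
It remains to remove the finitely many points of $\Lambda_0$. Since $\Gamma$ is infinite, each point $\lambda\in\Lambda_0$ can be traded for one \emph{extra} ordinate in $\Gamma\setminus\Gamma'$. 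Because $\Gamma$ is far denser than $\delta^{-1}$ at large height, there are unused ordinates arbitrarily close to any prescribed position. The replacement must be made by a stability argument rather than literally in place, since the points of $\Lambda_0$ are at small height and the ordinates are not.

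For that stability argument I would use the standard fact that \emph{uniqueness plus frame over a cofinite subset gives a frame}. The set $\Gamma_{\mathrm{new}}=\Gamma'\cup E$, with $E$ a finite set of extra ordinates, $|E|=|\Lambda_0|$, has the same excess structure as $\Gamma'\cup\Lambda_0$: a finite perturbation of a sampling set. Such a perturbation remains sampling provided it is still a uniqueness set, by a compactness argument on the finite-dimensional quotient. Uniqueness holds because $\Gamma$ has density $\to\infty>L/\pi$. Indeed, an $F\in PW_L$ vanishing on $\Gamma$ would violate Jensen's formula: the number of zeros of $F$ in $|z|<R$ is $O(R)$, whereas $N(R)\asymp R\log R$. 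I expect this compactness step to be the delicate point. I would carry it out by contradiction: take $F_k$ with $\|F_k\|=1$ and $\sum_{\Gamma}|F_k|^2\to0$, extract a locally uniformly convergent subsequence with limit $F$, use the sampling inequality on $\Gamma'$ together with the smallness on $E$ to keep $\|F\|$ away from $0$, and conclude that $F$ vanishes on $\Gamma$, contradicting uniqueness. Then $c_L=\frac{1}{\pi}\cdot 2\pi\, c>0$ after tracking the constants, and $\lmin(L)\ge c_L>0$.
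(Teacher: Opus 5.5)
Your proof is correct. It shares the paper's skeleton: the zeros-side identity under RH, a separated subset of the ordinates that is eventually denser than the Nyquist rate $L/\pi$, and a Paley--Wiener sampling inequality. The difference is in how the zero-free hole $(-T_0,T_0)$ is handled.

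The paper does not treat the hole as an obstacle. Lower uniform (Beurling) density is a limit over windows of length tending to infinity, so a bounded hole does not affect it. The paper therefore takes a maximal $\delta$-separated subset of $\{\pm\gamma:\gamma\ge T_1\}$, with density supplied by Littlewood's gap theorem, and quotes Beurling's sampling theorem directly.

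You instead fill the hole with finitely many auxiliary nodes $\Lambda_0$ and apply a maximal-gap sampling theorem on all of $\R$ (of Gr\"ochenig type, provable from Wirtinger's and Bernstein's inequalities). You then remove $\Lambda_0$ by weak compactness, with uniqueness supplied by Jensen's formula. This buys a proof from more elementary ingredients, and it isolates a reusable principle: a finite modification of a sampling set that is still a uniqueness set is again sampling. The cost is length. Neither route yields an effective $c_L$.

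Three points to tighten:
\begin{itemize}
\item To force an ordinate into every interval of length $\delta$ at height $T$, the main term $\frac{\delta}{2\pi}\log T$ must beat the RH error $O(\log T/\log\log T)$. That requires $\log\log T_0\gg 1/\delta$, not $\log T_0\gg1/\delta$. This only changes $T_0$.
\item In the contradiction step, the non-vanishing of the limit must come from the finite set $\Lambda_0$, not from ``the sampling inequality on $\Gamma'$'' (which is not available) or from $E$. The norm itself is not weakly continuous. Since $\sum_{\Gamma'}|F_k|^2\to0$ and $\Gamma'\cup\Lambda_0$ is sampling, $\sum_{\Lambda_0}|F_k|^2$ stays bounded below, and this survives the pointwise limit because $\Lambda_0$ is finite. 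With that, trading $\Lambda_0$ for extra ordinates $E$ is superfluous: run the argument directly on $\Gamma$, with Fatou giving $F|_\Gamma=0$.
\item Jensen's formula must be confronted with the number of \emph{distinct} points of $\Gamma$, whereas $N(R)$ counts multiplicity. Use instead that, for every $\delta>0$, distinct ordinates are $\delta$-dense beyond $T_0(\delta)$. Their count in $[0,R]$ is then superlinear, which contradicts the $O(R)$ zero count of a nonzero function in $PW_L$.
\end{itemize}
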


\begin{proof}
For such $f$, the transform $\fhat$ is even and belongs to the
Paley--Wiener space $PW_L^2$ of entire functions of exponential type
at most $L$ that are square-integrable on $\R$.  By Littlewood's
theorem (a consequence of RH; see \cite[Ch.~XIII]{Titchmarsh1986}) the
gaps between consecutive ordinates satisfy
$\gamma_{n+1}-\gamma_n\le\eta(\gamma_n)$ with $\eta(t)\downarrow0$, so
the symmetrized ordinate set $\{\pm\gamma\}$ is $\eta(T_1)$-dense in
$\{|t|\ge T_1\}$ for every $T_1$.  Fix $\delta=\pi/(4L)$ and choose
$T_1$ with $\eta(T_1)\le\delta/2$; let $\Lambda$ be a maximal
$\delta$-separated subset of $\{\pm\gamma:\gamma\ge T_1\}$.  By
maximality, consecutive points of $\Lambda$ on either half-line lie
within $\delta+2\eta(T_1)\le2\delta$ of each other, so every interval
of length $r$ contains at least $(r-2T_1)/(2\delta)$ points of
$\Lambda$; hence the lower uniform (Beurling) density is
$D^{-}(\Lambda)\ge\frac{1}{2\delta}=\frac{2L}{\pi}>\frac{L}{\pi}$.  A
separated set of lower uniform density exceeding $L/\pi$ is a set of
sampling for $PW_L^2$ (Beurling's sampling theorem; see
\cite[Ch.~2--3]{Seip2004}): there exists $c>0$, depending only on $L$,
$\delta$ and $T_1$, with
$\sum_{\lambda\in\Lambda}|\fhat(\lambda)|^{2}\ge c\,\|\fhat\|_{2}^{2}$.
Since $\fhat$ is even and $\Lambda$ is contained in the symmetrized
ordinate set,
\[
  Q(f)\;=\;\sum_{\gamma>0}2\,|\fhat(\gamma)|^{2}
  \;\ge\;\sum_{\lambda\in\Lambda}|\fhat(\lambda)|^{2}
  \;\ge\;c\,\|\fhat\|_{2}^{2}
  \;=\;2\pi c\,\|f\|_{2}^{2}. \qedhere
\]
\end{proof}

\begin{remark}
The proposition is where RH enters twice: once through the zeros-side
identity \eqref{eq:zerosside}, and once through Littlewood's gap bound.
Unconditionally the gaps are only known to be bounded, which yields a
separated subset of some fixed density, and that is insufficient once
$L/\pi$ exceeds it.  No effective $c_L$ comes out of the argument.  An
effective, unconditional value of $c_L$ is exactly what
Theorem~\ref{thm:L08} provides, and the true decay of $c_L=\lmin(L)$
is what Sections~\ref{sec:results}--\ref{sec:theorem} measure and
bound.
\end{remark}

\subsection{Validation of the geometric-side assembly}
\label{sec:validation}

Convention errors are the dominant practical risk in computations of
this kind, so we record the end-to-end validation performed on the
lower-bound implementation.  The matrix of $Q$ in the basis
\eqref{eq:Tn} was assembled twice, (a) in the frequency domain from
\eqref{eq:symbol} by certified quadrature, and (b) in the time domain
via the exact archimedean identity of Lemma~\ref{lem:arch} below.
Separately, the bottom eigenvector $v$ of the arithmetic-side matrix
was evaluated on the zero side, as $2\sum_{j\le J}F_v(\gamma_j)^2$
over the first $J=400$ ordinates of $\zeta$.  At $L=0.5,0.8,1.0$ the
two arithmetic-side assemblies agree to all displayed digits, and the
zero-side partial sums are consistent with them in the only way they
can be, lying a few percent below the floors, since the missing tail
over $\gamma_j>\gamma_{400}$ is a sum of squares:
\[
\begin{array}{c|cc}
L & \lammin(\text{arith.\ side}) & 2\sum_{j\le400}F_v(\gamma_j)^2\\
\hline
0.5 & 9.4039\times10^{-7} & 9.1610\times10^{-7}\\
0.8 & 5.7499\times10^{-16} & 5.3290\times10^{-16}\\
1.0 & 1.6236\times10^{-20} & 1.5922\times10^{-20}
\end{array}
\]
These validation runs use a deliberately small basis ($16$ even
Legendre modes), so the floors in the table sit above the converged
window values quoted elsewhere (e.g.\ $1.66\times10^{-17}$ at
$L=0.8$); what is being validated is the normalization and the
zeros-side identity, not the floor itself.  The agreement confirms
the normalization of \eqref{eq:Q} at the $10^{-20}$
scale, twenty digits past the leading cancellation.  It also cross-links the
two halves of the paper: the same zeros-side functional evaluated here
for validation is the one whose infimum the upper-bound pipeline of
Section~\ref{sec:pipeline} certifies.

\begin{lemma}[Exact time-domain archimedean term]\label{lem:arch}
Let $g$ be real-valued, even and continuous with
$\supp g\subseteq[-2L,2L]$, suppose the one-sided limit
\begin{equation}\label{eq:archcorner}
  c\;=\;\lim_{x\to0^+}\frac{g(0)-g(x)}{x}
\end{equation}
exists, and suppose
$\int_{\R}|\widehat g(t)|\log(2+|t|)\,dt<\infty$.  Then, with $\gamma$
Euler's constant,
\begin{align*}
\frac1{2\pi}\int_{\R}\widehat g(t)\Bigl[\Rey\,\psi\bigl(\tfrac14
+\tfrac{it}2\bigr)-\log\pi\Bigr]dt
&=-\bigl(\gamma+\log\pi+\log(1-e^{-4L})\bigr)g(0)\\
&\quad+\int_0^{2L}
\frac{2\,[\,e^{-2x}g(0)-e^{-x/2}g(x)\,]}{1-e^{-2x}}\,dx ,
\end{align*}
both sides converging absolutely.  The integrand of the last integral
extends continuously to $x=0$ with value $c-\tfrac32g(0)$, and it
inherits on $[0,2L]$ whatever regularity the restriction
$g|_{[0,2L]}$ has there.  In the assembly of
Section~\ref{sec:validation} that restriction is a polynomial, so the
extended integrand is real-analytic on $[0,2L]$; note that $c\ne0$ in
general, the even extension of $g$ having a corner at the origin.
\end{lemma}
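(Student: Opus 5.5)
We will derive the identity from the Gauss-type integral representation of the digamma function. For $\Rey s>0$ we have
\[
\psi(s)=-\gamma+\int_0^\infty\frac{e^{-u}-e^{-su}}{1-e^{-u}}\,du .
\]
Take $s=\tfrac14+\tfrac{it}{2}$ and substitute $u=2x$. This gives
\[
\Rey\psi\bigl(\tfrac14+\tfrac{it}2\bigr)=-\gamma+\int_0^\infty\frac{2\,[e^{-2x}-e^{-x/2}\cos(tx)]}{1-e^{-2x}}\,dx .
\]
The $-\log\pi$ term is carried along unchanged.

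\textbf{Pairing step.} Multiply by $\widehat g(t)/(2\pi)$ and integrate in $t$. By Fourier inversion for even $g$ we have $\frac1{2\pi}\int\widehat g(t)\,dt=g(0)$ and $\frac1{2\pi}\int\widehat g(t)\cos(tx)\,dt=g(x)$. Formally this yields
\[
-(\gamma+\log\pi)\,g(0)+\int_0^\infty\frac{2\,[e^{-2x}g(0)-e^{-x/2}g(x)]}{1-e^{-2x}}\,dx .
\]
Since $g(x)=0$ for $x>2L$, the tail contributes
\[
g(0)\int_{2L}^\infty\frac{2e^{-2x}}{1-e^{-2x}}\,dx=g(0)\bigl[\log(1-e^{-2x})\bigr]_{2L}^\infty=-\log(1-e^{-4L})\,g(0).
\]
This is exactly the stated constant, and the integral now runs over $[0,2L]$.

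\textbf{Justifying the interchange.} This is the main obstacle. The double integral is not absolutely convergent near $x=0$, where the kernel behaves like $1/x$ against $1-\cos(tx)$. I would first truncate to $x\ge\varepsilon$. On $[\varepsilon,\infty)$ Fubini applies, because $\widehat g\in L^1$ (this follows from $\int|\widehat g|\log(2+|t|)<\infty$) and the kernel is integrable there.

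On the $t$-side, the truncated inner integral equals $\Rey\psi+\gamma$ minus $\int_0^\varepsilon \frac{2[e^{-2x}-e^{-x/2}\cos tx]}{1-e^{-2x}}\,dx$. I will bound this remainder uniformly by $C\log(2+|t|)$, using $|1-\cos tx|\le\min(2,t^2x^2/2)$ on $[0,\varepsilon]$; this gives a bound $O(\min(\log(2+|t|\varepsilon),1+\varepsilon\ldots))$. Pointwise the remainder tends to $0$ as $\varepsilon\to0$. The hypothesis $\int|\widehat g|\log(2+|t|)<\infty$ then supplies an integrable dominating function, and dominated convergence lets $\varepsilon\to0$ on the frequency side. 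The same hypothesis, together with $|\Rey\psi(\tfrac14+\tfrac{it}2)|=O(\log(2+|t|))$, shows that the left-hand side converges absolutely.

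On the $x$-side, convergence of $\int_\varepsilon^{2L}$ as $\varepsilon\to0$ follows from the continuity of the integrand at $0$, established in the next step. The limit is therefore the stated integral, converging absolutely.

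\textbf{Behaviour at $x=0$.} Write
\[
e^{-2x}g(0)-e^{-x/2}g(x)=g(0)\bigl(e^{-2x}-e^{-x/2}\bigr)+e^{-x/2}\bigl(g(0)-g(x)\bigr).
\]
The first piece is $-\tfrac32 x\,g(0)+O(x^2)$, and the second is $c\,x+o(x)$. The denominator $1-e^{-2x}$ is $2x+O(x^2)$. Dividing, the factor $2$ cancels and the limit is $c-\tfrac32 g(0)$.

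For regularity, the integrand equals $\frac{2x}{1-e^{-2x}}$, which is analytic on $[0,2L]$, times the difference quotient $\frac{e^{-2x}g(0)-e^{-x/2}g(x)}{x}$. When $g|_{[0,2L]}$ is a polynomial, the numerator is an entire function vanishing at $0$. The difference quotient is then entire, so the integrand is real-analytic on $[0,2L]$. For general $g$, the same factorization transfers whatever smoothness $g|_{[0,2L]}$ has to the integrand, which gives the stated inheritance of regularity.
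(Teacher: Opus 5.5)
Your proposal is correct and is essentially the paper's proof. The paper writes the Gauss representation as $\int_0^1\frac{1-s^{z-1}}{1-s}\,ds$ and substitutes $s=e^{-2x}$, which is your formula after $s=e^{-u}$, $u=2x$; the Fourier-inversion pairing, the closed-form tail $-\log(1-e^{-4L})\,g(0)$ and the first-order expansion at the origin are the same. The one misstatement is your claim that the double integral is not absolutely convergent near $x=0$: the estimate $\int_0^\infty \frac{2}{1-e^{-2x}}\,\bigl|e^{-2x}-e^{-x/2}\cos(tx)\bigr|\,dx=O(\log(2+|t|))$ (essentially the uniform bound your dominated-convergence step already uses), together with the hypothesis $\int|\widehat g(t)|\log(2+|t|)\,dt<\infty$, makes it absolutely convergent. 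The paper therefore applies Fubini once on all of $[0,\infty)\times\mathbb{R}$, and your $\varepsilon$-truncation, while valid, is an unnecessary detour.
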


\begin{proof}
\emph{The kernel.}  Gauss's representation
$\psi(z)=-\gamma+\int_0^1\frac{1-s^{z-1}}{1-s}\,ds$ is valid for
$\Rey\,z>0$, hence at $z=\frac14+\frac{it}2$, where
$s^{z-1}=s^{-3/4}e^{i(t/2)\log s}$.  Taking real parts and substituting
$s=e^{-2x}$, so that $ds=-2e^{-2x}\,dx$, $s^{-3/4}=e^{3x/2}$ and
$\frac t2\log s=-tx$,
\begin{equation}\label{eq:archkernel}
  \Rey\,\psi\bigl(\tfrac14+\tfrac{it}2\bigr)
  \;=\;-\gamma+\int_0^\infty k(x)
  \bigl[e^{-2x}-e^{-x/2}\cos(tx)\bigr]dx,
  \qquad k(x)=\frac2{1-e^{-2x}} .
\end{equation}

\emph{Size of the kernel.}  Split the bracket as
$(e^{-2x}-e^{-x/2})+e^{-x/2}(1-\cos(tx))$.  The first part does not
involve $t$; since $k(x)=x^{-1}+O(1)$ and
$e^{-2x}-e^{-x/2}=-\frac32x+O(x^2)$ as $x\to0^+$, while $k$ is bounded
and $e^{-2x}-e^{-x/2}=O(e^{-x/2})$ as $x\to\infty$, it contributes
$O(1)$.  The second part is non-negative; split it at $x=1/|t|$.  On
$(0,1/|t|)$ use $1-\cos(tx)\le t^2x^2/2$ and $k(x)\le Cx^{-1}$, giving
$O(1)$; on $(1/|t|,1)$ use $1-\cos(tx)\le2$ and $k(x)\le Cx^{-1}$,
giving $O(\log(2+|t|))$; on $(1,\infty)$ use $k=O(1)$ against the
factor $e^{-x/2}$, giving $O(1)$.  Hence
\begin{equation}\label{eq:archsize}
  \int_0^\infty k(x)\bigl|e^{-2x}-e^{-x/2}\cos(tx)\bigr|dx
  \;=\;O\bigl(\log(2+|t|)\bigr) .
\end{equation}
In particular \eqref{eq:archkernel} converges absolutely and
$\Rey\,\psi(\frac14+\frac{it}2)-\log\pi=O(\log(2+|t|))$, so the
left-hand side of the lemma converges absolutely by the hypothesis on
$\widehat g$.

\emph{Exchange of integrals.}  By \eqref{eq:archsize} and that same
hypothesis,
\[
  \int_{\R}|\widehat g(t)|\int_0^\infty k(x)
  \bigl|e^{-2x}-e^{-x/2}\cos(tx)\bigr|dx\,dt\;<\;\infty ,
\]
so Fubini's theorem applies.  Since $\widehat g\in L^1$ and $g$ is
continuous, real and even, Fourier inversion gives
$\frac1{2\pi}\int_{\R}\widehat g(t)\,dt=g(0)$ and
$\frac1{2\pi}\int_{\R}\widehat g(t)\cos(tx)\,dt=g(x)$ at every $x$.
Substituting \eqref{eq:archkernel} and exchanging the order of
integration therefore yields
\begin{equation}\label{eq:archhalf}
  \frac1{2\pi}\int_{\R}\widehat g(t)
  \Bigl[\Rey\,\psi\bigl(\tfrac14+\tfrac{it}2\bigr)-\log\pi\Bigr]dt
  =-(\gamma+\log\pi)g(0)
  +\int_0^\infty k(x)\bigl[e^{-2x}g(0)-e^{-x/2}g(x)\bigr]dx .
\end{equation}

\emph{The tail in closed form.}  On $[2L,\infty)$ we have $g\equiv0$, so
there the integrand of \eqref{eq:archhalf} is $k(x)e^{-2x}g(0)$, and
the substitution $u=e^{-2x}$ gives
\[
  \int_{2L}^\infty\frac{2e^{-2x}}{1-e^{-2x}}\,dx
  \;=\;\int_0^{e^{-4L}}\frac{du}{1-u}
  \;=\;-\log\bigl(1-e^{-4L}\bigr) .
\]
Splitting \eqref{eq:archhalf} at $2L$ and inserting this value gives the
identity of the lemma.

\emph{Behaviour at the origin.}  By \eqref{eq:archcorner},
$g(x)=g(0)-cx+o(x)$ as $x\to0^+$, so
\[
  e^{-2x}g(0)-e^{-x/2}g(x)
  =g(0)\bigl(e^{-2x}-e^{-x/2}\bigr)+e^{-x/2}\bigl(g(0)-g(x)\bigr)
  =\bigl(c-\tfrac32g(0)\bigr)x+o(x),
\]
while $1-e^{-2x}=2x+O(x^2)$.  The quotient therefore tends to
$c-\frac32g(0)$.  Moreover $2\bigl[e^{-2x}g(0)-e^{-x/2}g(x)\bigr]$ and
$1-e^{-2x}$ are, on $[0,2L]$, as regular as $g|_{[0,2L]}$, and the
latter has the simple zero $2x+O(x^2)$ at the origin, which the former
matches; the quotient therefore inherits that regularity, and is
real-analytic when $g|_{[0,2L]}$ is.
\end{proof}

Lemma~\ref{lem:arch} eliminates every oscillatory or singular integral
from the assembly of $Q$: prime terms are shifts $g(\pm\log n)$, the
pole term is a rank-one $\cosh$ moment, and the archimedean term is
the integral above, whose integrand becomes real-analytic on $[0,2L]$
once its removable singularity at the origin is filled in.  All matrix
entries therefore converge geometrically under Gauss quadrature and are
certifiable at any precision.  We use this assembly for validation and floor curves; the
certificates of Sections~\ref{sec:certified}--\ref{sec:L119} use the
frequency-side assembly, whose certification is Lemma~\ref{lem:quad}.

\section{Envelope lemmas}\label{sec:envelope}

\begin{lemma}[Crude envelope]\label{lem:envelope}
For all $t\ge\frac{15}4$,
\[
\Rey\,\psi\Bigl(\tfrac14+\tfrac{it}2\Bigr)-\log\pi
\;\ge\;\log\frac t{2\pi}-\frac1t ,
\qquad\text{hence}\qquad
\Psi_L(t)\;\ge\;\log\frac t{2\pi}-\frac1t-A_L .
\]
\end{lemma}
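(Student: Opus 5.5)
The plan is to compare $\psi$ with its Stirling main term $\log z$ at $z=\frac14+\frac{it}2$, using an exact remainder formula whose error we can bound by hand, rather than relying on asymptotics. I would use Binet's second formula in the form
\[
\psi(z)\;=\;\log z-\frac1{2z}-2\int_0^\infty\frac{s\,ds}{(s^2+z^2)(e^{2\pi s}-1)},
\qquad \Rey z>0,
\]
which holds at our $z$ since $\Rey z=\frac14>0$. Taking real parts splits $\Rey\psi(z)-\log\pi$ into three pieces: the main term $\log|z|-\log\pi$, the correction $-\Rey\frac1{2z}$, and the integral remainder.

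\emph{Main term.} Since $|z|=\sqrt{\frac1{16}+\frac{t^2}4}\ge\frac t2$, we get $\log|z|-\log\pi\ge\log\frac t{2\pi}$. This already produces the right-hand side of the lemma, so it remains to show that the other two pieces together are at least $-\frac1t$.

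\emph{Correction term.} We have $\Rey\frac1{2z}=\frac{\Rey z}{2|z|^2}=\frac{1/8}{\frac1{16}+\frac{t^2}4}\le\frac1{2t^2}$, so this piece contributes at least $-\frac1{2t^2}$.

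\emph{Integral remainder.} This is the step needing the most care. The danger is that $s^2+z^2$ could be small on the positive $s$-axis, because $\Rey z^2=\frac1{16}-\frac{t^2}4<0$ makes the real part vanish at some $s>0$. The imaginary part avoids this: $\operatorname{Im}(s^2+z^2)=\operatorname{Im}z^2=\frac t4$ is independent of $s$, so $|s^2+z^2|\ge\frac t4$ for all real $s$. Hence the remainder has modulus at most
\[
\frac8t\int_0^\infty\frac{s\,ds}{e^{2\pi s}-1}\;=\;\frac8t\cdot\frac1{24}\;=\;\frac1{3t},
\]
using $\int_0^\infty s/(e^{2\pi s}-1)\,ds=\zeta(2)/(4\pi^2)=\frac1{24}$. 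Its real part is therefore at least $-\frac1{3t}$.

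\emph{Combining.} The three pieces give
\[
\Rey\psi\bigl(\tfrac14+\tfrac{it}2\bigr)-\log\pi\;\ge\;\log\frac t{2\pi}-\frac1{2t^2}-\frac1{3t}.
\]
This is at least $\log\frac t{2\pi}-\frac1t$ as soon as $\frac1{2t^2}\le\frac2{3t}$, that is, for $t\ge\frac34$. This comfortably covers $t\ge\frac{15}4$. If it is convenient to use only the more familiar Stirling remainder bound with a $\sec^2(\frac12\arg z)$ factor, the stated threshold $\frac{15}4$ leaves ample room for that as well.

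\emph{Bound on $\Psi_L$.} Since $\cos(t\log n)\le1$, each term satisfies $-\frac{2\Lambda(n)}{\sqrt n}\cos(t\log n)\ge-\frac{2\Lambda(n)}{\sqrt n}$. Summing over $\log n<2L$ shows the comb term in \eqref{eq:symbol} is at least $-A_L$. Adding this to the archimedean bound gives the stated inequality for $\Psi_L(t)$.
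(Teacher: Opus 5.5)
Your proof is correct and is essentially the paper's: Binet's second formula for $\psi$, the lower bound $|s^2+z^2|\ge|\operatorname{Im} z^2|$, the value $\int_0^\infty s\,ds/(e^{2\pi s}-1)=\frac1{24}$, and the trivial bound $\cos\le1$ on the comb. The only difference is that the paper first passes to $z+1=\frac54+\frac{it}2$ via $\psi(z)=\psi(z+1)-1/z$, which shrinks the integral term to $\frac1{15t}$ but adds $O(t^{-2})$ terms that force $t\ge\frac{15}4$; applying Binet directly at $\Rey z=\frac14$, as you do, is equally valid and gives the inequality already for $t\ge\frac34$, so your closing aside about the Stirling bound with a $\sec^2(\frac12\arg z)$ factor is not needed.
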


\begin{proof}
By $\psi(z)=\psi(z+1)-1/z$ it suffices to bound
$\psi\bigl(\tfrac54+\tfrac{it}2\bigr)$.  Binet's second formula gives,
for $\Rey\,z>0$,
$\psi(z)=\log z-\frac1{2z}-2\int_0^\infty
\frac{u\,du}{(u^2+z^2)(e^{2\pi u}-1)}$.
At $z=\frac54+\frac{it}2$ we have
$|u^2+z^2|\ge|\Im(z^2)|=\frac{5t}4$ for all $u\ge0$, and
$\int_0^\infty\frac{u\,du}{e^{2\pi u}-1}=\frac1{24}$, so the integral
term is $\le\frac{8}{5t}\cdot\frac1{24}=\frac1{15t}$.  Furthermore
$\Rey\log z\ge\log(t/2)$, $|\Rey\frac1{2z}|\le\frac{5}{2t^2}$, and the
shift term satisfies $|\Rey\frac1{1/4+it/2}|\le\frac1{t^2}$.  The
total deficit is $\le\frac1{15t}+\frac7{2t^2}\le\frac1t$ for
$t\ge\frac{15}4$.  For the comb, trivially
$\bigl|\sum c_{p,k}\cos(tk\log p)\bigr|\le A_L$.
\end{proof}

\begin{lemma}[Optimality of the crude constant]\label{lem:sharp}
Write $P_L(t)=\sum_{\log n<2L}\frac{2\Lambda(n)}{\sqrt n}\cos(t\log n)$
for the prime comb.  Then
\[
\sup_{t\in\R}P_L(t)\;=\;A_L .
\]
Consequently the crude constant $C=A_L$ of Lemma~\ref{lem:envelope} is
already optimal among all pointwise bounds $P_L(t)\le C$ valid for
every $t$, which is the only information about the comb that the
reduction of Theorem~\ref{thm:reduction} uses.
\end{lemma}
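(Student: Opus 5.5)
The upper bound $P_L(t)\le A_L$ is immediate from $|\cos|\le1$ and the positivity of the coefficients $2\Lambda(n)/\sqrt n$. The content of the lemma is the reverse inequality, which we prove by simultaneous Diophantine approximation. The comb involves only the finitely many prime powers $n=p^k$ with $\log n<2L$. Hence only the finitely many primes $p_1,\dots,p_m<e^{2L}$ occur, and $\cos(tk\log p)$ depends on $t$ only through the phases $\theta_j(t)=t\log p_j \bmod 2\pi$. It therefore suffices to find, for every $\varepsilon>0$, a real $t$ with $\|t\log p_j/(2\pi)\|<\varepsilon$ for all $j\le m$, where $\|\cdot\|$ denotes distance to the nearest integer. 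For such $t$ and $n=p_j^k$ we get $|\cos(tk\log p_j)-1|\le \tfrac12(2\pi k\varepsilon)^2$. Summing against the coefficients with $k\le 2L/\log 2$ gives $P_L(t)\ge A_L-O_L(\varepsilon^2)$, and hence $\sup P_L=A_L$.

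To find such $t$, I will invoke Kronecker's theorem, or more simply Dirichlet's simultaneous approximation theorem applied to the real numbers $\alpha_j=\log p_j/(2\pi)$. Dirichlet's pigeonhole argument gives, for every $Q\ge1$, an integer $1\le q\le Q^m$ with $\|q\alpha_j\|<1/Q$ for all $j$. Taking $t=q$ then yields the required alignment with $\varepsilon=1/Q$. This version needs no independence hypothesis at all, since we approximate the zero vector, which is always reachable.

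Alternatively one can use the linear independence of $\{\log p\}$ over $\mathbb{Q}$, which holds by unique factorization, together with Kronecker–Weyl equidistribution of the flow $t\mapsto(t\log p_j/2\pi)_j$ on the torus $\mathbb{T}^m$. This stronger route shows that the phases are dense, so every configuration is approached, and it is presumably what the paper's phrase ``Weyl equidistribution shows optimal'' refers to. It also shows that the supremum is not attained for $t\ne0$, although $t=0$ trivially gives $P_L(0)=A_L$. Indeed, $t=0$ alone already proves $\sup_{t\in\R}P_L=A_L$. The nontrivial and useful form of the statement is that values arbitrarily close to $A_L$ occur at arbitrarily large $t$, in particular for $t>\Tsharp$ and beyond any threshold, which is what matters for the envelope in Theorem~\ref{thm:reduction}. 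Dirichlet supplies this directly, because we may choose $q$ larger than any prescribed $T_0$: apply the pigeonhole argument to multiples of a large integer, or take the $q$ produced for precision $\varepsilon/M$ and use $Mq$.

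The consequence about the crude constant then follows immediately. Suppose $C<A_L$ were an admissible pointwise bound, even one valid only for $t\ge T_0$. Choosing $\varepsilon$ with $O(\varepsilon^2)<A_L-C$ and $t\ge T_0$ aligned as above produces $P_L(t)>C$, a contradiction. The main step needing care is exactly this large-$t$ version of the alignment, and it is settled by the Dirichlet argument. Note also that the quantifier ``valid for every $t$'' in the statement makes even the $t=0$ case sufficient on its own.
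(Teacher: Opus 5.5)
Your proof is correct. For the substantive part, near-alignment at arbitrarily large $t$ (what matters, since the envelope is only used beyond $\Tsharp$), you use a different key lemma from the paper.

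The paper appeals to the $\mathbb{Q}$-linear independence of $\{\log p\}$ and to Kronecker--Weyl equidistribution of $t\mapsto(t\log p_j)_j$ on the full torus. You use Dirichlet's homogeneous simultaneous approximation, together with the rescaling $t=Mq$ (or pigeonholing the multiples of a large integer).

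\emph{What your route buys.} It needs no arithmetic input, and it is effective. It gives an aligned $t\in[T_0,\lceil T_0\rceil Q^{m}]$ at precision $1/Q$, where $m$ is the number of primes below $e^{2L}$. The Weyl argument as written is qualitative and gives no bound on how far one must go.

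\emph{What the paper's route buys.} It gives density of the whole phase orbit, so every configuration of phases is approached, not just the zero vector. This is exactly what Remark~\ref{rem:nosharp} needs for $\inf_tP_L=-\Aeff$, where the phases must be steered to the per-prime minimizers. That inhomogeneous target is out of reach of Dirichlet's theorem and genuinely requires independence.

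One side remark in your proposal is false, though the proof does not use it: the claim that the supremum is not attained at any $t\neq0$. Density has nothing to do with attainment. Non-attainment follows from linear independence, and only when at least two primes occur: then $\cos(t\log2)=\cos(t\log3)=1$ with $t\neq0$ would force $\log2/\log3\in\mathbb{Q}$. For $\log2<2L\le\log3$, e.g.\ $L=0.5$, the comb is the single term $\sqrt2\log2\,\cos(t\log2)$, which attains $A_L$ at $t=2\pi/\log2$.
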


\begin{proof}
$P_L(t)\le A_L$ is trivial (every cosine is $\le1$ and the
coefficients are positive), with equality at $t=0$.  That the supremum
is approached at arbitrarily large $t$ as well follows from Weyl
equidistribution.  The numbers $\{\log p\}_p$ are linearly independent
over $\mathbb Q$ by unique factorization, so the phases
$(t\log p_1,\dots,t\log p_r)$ equidistribute mod $2\pi$ on the full
torus.  Hence $t$ can be chosen so that every $\theta_p=t\log p$, and
therefore every $k\theta_p$, lies simultaneously arbitrarily close to
$0$, where all cosines are arbitrarily close to $+1$.
\end{proof}

\begin{remark}[A tempting sharpening, and why it fails]
\label{rem:nosharp}
Grouping the comb by primes, with
$c_{p,k}=2\Lambda(p^k)/p^{k/2}$, the quantities
\[
\mu_p\;=\;-\min_{\theta\in[0,\pi]}\ \sum_{k:\,k\log p<2L}
c_{p,k}\cos(k\theta),
\qquad
\Aeff\;=\;\sum_{p:\,\log p<2L}\mu_p
\]
satisfy $\inf_tP_L(t)=-\Aeff$, by the same Weyl argument with the
phases sent to the per-prime block minimizers.  Here $\Aeff$ is often
much smaller than $A_L$, because the powers of $p$ share the phase
$\theta_p$ and cannot all reach $-1$ together; at $L=1.19$, for
instance, $\Aeff=4.6948$ against $A_L=7.0750$.  It is tempting to
substitute $\Aeff$ for $A_L$ in Lemma~\ref{lem:envelope}.  This is
wrong.  The inequality $P_L\ge-\Aeff$ bounds the comb from below, hence
the symbol $\Psi_L=H-P_L$ from above, whereas the envelope needs an
upper bound for $P_L$; and by Lemma~\ref{lem:sharp} no pointwise upper
bound below $A_L$ exists, since at $\theta_p=0$ all cosines align at
$+1$ and the per-prime grouping buys nothing.  What $\Aeff$ measures is
the depth of the wells of $\Psi_L$, not its high-frequency floor.  An
earlier draft of this work made exactly this substitution;
Section~\ref{sec:L119} discusses the resulting retracted
support-$2.38$ claim and the corrected cost, and
Section~\ref{sec:failures2} records the error.  The thresholds are:
\[
\begin{array}{c|cc}
L & A_L & T_1=2\pi e^{A_L}\\
\hline
0.8 & 2.9420 & 119\\
1.0 & 5.8525 & 2187\\
1.19 & 7.0750 & 7.4\times10^3\\
1.2 & 8.5210 & 3.2\times10^4\\
1.4 & 10.290 & 1.9\times10^5\\
1.6 & 14.323 & 1.0\times10^7\\
2.0 & 24.383 & 2.4\times10^{11}
\end{array}
\]
\end{remark}

\section{Proof of Theorem \ref{thm:reduction}}\label{sec:reduction}

Split $\int_0^\infty=\int_0^{\Tsharp}+\int_{\Tsharp}^\infty$ in
\eqref{eq:Q} (for even real $f$, $\int_0^\infty|F|^2=\pi\|f\|^2$ by
Parseval).  On $[\Tsharp,\infty)$,
Lemma~\ref{lem:envelope} gives
$\Psi_L(t)\ge\log\frac t{2\pi}-\frac1t-A_L\ge\beta^*$, the last step
because the middle expression is increasing in $t$.  Hence
\[
\frac1{2\pi}\int_{|t|\ge \Tsharp}|F|^2\Psi_L
\;\ge\;\frac{\beta^*}\pi\int_{\Tsharp}^\infty|F|^2
\;=\;\beta^*\Bigl(\|f\|^2-\frac1\pi\int_0^{\Tsharp}|F|^2\Bigr),
\]
which is \eqref{eq:R}.  For the localization: the operator $C$ with
symbol $(\Psi_L-\beta^*)\chi_{[0,\Tsharp]}$ has entries
$C_{nm}=\frac1\pi\int_0^{\Tsharp}(\Psi_L-\beta^*)\widehat T_n
\widehat T_m\,dt$, and by \eqref{eq:Tn} and the elementary bound
\begin{equation}\label{eq:forbidden}
|j_n(x)|\;\le\;\frac{x^n}{(2n+1)!!}\qquad(x\ge0),
\end{equation}
each row satisfies
$|C_{nm}|\le\max_{[0,\Tsharp]}|\Psi_L-\beta^*|\cdot
2\sqrt{L\nu_n}\,\frac{(\Tsharp L)^n}{(2n+1)!!}\cdot2\sqrt{L\nu_m}$,
which decays super-exponentially once $n>eL\Tsharp/2$ by Stirling.
The pole vector decays super-exponentially as well ($\cosh$ is entire:
its Legendre coefficients decay faster than any geometric rate).

It remains to pass from the leading block to the whole of $H$, and here
a coupling estimate, small but not zero, has to be stated explicitly.
Fix a cut after $N$ even modes, with first discarded order
$2N\gtrsim eL\Tsharp/2$, and write the matrix of $R$ in block form
\[
M_R\;=\;\begin{pmatrix}A&B\\ B^{\mathsf T}&D\end{pmatrix},
\]
with $A$ the leading $N\times N$ block, $D$ the tail block, and $B$
the leading--tail coupling.  Positive semidefiniteness of the two
diagonal blocks alone does not imply $M_R\succeq0$; the correct
statement is the elementary two-block bound
\begin{equation}\label{eq:twoblock}
\lammin(M_R)\;\ge\;\min\bigl(\lammin(A),\,\lammin(D)\bigr)
-\|B\| ,
\end{equation}
which follows from
$x^{\mathsf T}Ax+2x^{\mathsf T}By+y^{\mathsf T}Dy
\ge\lammin(A)|x|^2-2\|B\||x||y|+\lammin(D)|y|^2$ and
$2|x||y|\le|x|^2+|y|^2$.  The two tail quantities are controlled by
the decay above: the Gershgorin bound on the rows of $D$ gives
$\lammin(D)\ge\beta^*-\varepsilon_D$, and the spectral norm of the
rectangular coupling block is bounded by the Schur test
$\|B\|\le\bigl(\|B\|_1\|B\|_\infty\bigr)^{1/2}
=\bigl(\max_m\textstyle\sum_{n\ge N}|B_{nm}|
\cdot\max_{n\ge N}\sum_{m<N}|B_{nm}|\bigr)^{1/2}
\le\varepsilon_B$; both the row sums (at most $N$ terms, each
super-exponentially small) and the column sums (convergent by the
decay in $n$) are explicit sums of the entry bounds over rows
$n\ge N$ (in the certified run of Section~\ref{sec:certified}, both
constants are below $10^{-100}$).  With
\eqref{eq:twoblock}, $\lammin(A)\ge\lambda_0$ implies
$R\ge(\min(\lambda_0,\beta^*-\varepsilon_D)-\varepsilon_B)\|f\|^2$ on
all of $H$, which is the statement of the theorem.  \qed

\begin{remark}
The proof discards three quantities, each with a definite sign or an
explicit bound: the excess of $\Psi_L$ over $\beta^*$ beyond
$\Tsharp$, the tail deviation $\varepsilon_D$, and the coupling norm
$\varepsilon_B$.  This one-sidedness is what makes the certificate
robust, since finer quadrature or larger $N$ can only improve the
certified constant, never invalidate it.  Section~\ref{sec:synthesis}
measures the cost of the first discard: at $L=0.8$ it is less than one
order of magnitude.
\end{remark}

\section{The certified computation at support 1.6}
\label{sec:certified}

All computations use \texttt{mpmath} \cite{mpmath} with the
\texttt{gmpy2} backend.

\subsection{Assembly}
$L=0.8$, $\Tsharp=200$, envelope constant
$A_L=2.9419735\ldots$ (a finite sum, certified by direct
multiprecision evaluation), so
$\beta^*=\log\frac{200}{2\pi}-\frac1{200}-A_L=0.5134667\ldots$;
$N=200$ even Legendre orders $0,2,\dots,398$; dps $50$.  The
$C$-matrix is integrated over $[0,200]$ in $800$ panels of width
$\frac14$ with $32$-point Gauss--Legendre quadrature; spherical Bessel
values by Miller's backward recurrence; the pole vector by Gauss
quadrature of order $320$ (the integrand
$\bar P_n(x/L)\cosh(x/2)$ is entire, not polynomial, so ``exact'' is
not claimed: the same Bernstein-ellipse argument as in
Lemma~\ref{lem:quad} bounds the quadrature error at this order far
below the $10^{-40}$ level).
The assembled matrix is archived.

\subsection{Quadrature certification}

\begin{lemma}[Ellipse bound]\label{lem:quad}
On each panel the integrand of $C_{nm}$ extends analytically to the
Bernstein ellipse with sum of semiaxes $\rho=6.55$ relative to the
panel (the nearest singularities of $\Psi_L$ are the digamma poles at
$\Im t=\pm\frac12$; we use the strip $|\Im t|\le0.4$).  Here
``analytic extension'' means the digamma term
$\Re\,\psi(\tfrac14+\tfrac{it}2)$ is replaced by its half-sum
$\tfrac12\bigl[\psi(\tfrac14+\tfrac{it}2)+\psi(\tfrac14-\tfrac{it}2)\bigr]$,
which is analytic in $t$ and agrees with it on the real axis.  On that
ellipse, by the Poisson representation
$j_n(z)=\frac{(-i)^n}2\int_{-1}^1P_n(s)e^{izs}ds$,
\[
|j_n(z)|\le e^{|\Im z|}
\quad\Rightarrow\quad
|\widehat T_n\widehat T_m|\le 4L\sqrt{\nu_n\nu_m}\,e^{2L\cdot0.4},
\]
and $|\Psi_L-\beta^*|\le20$ there; hence the $32$-point Gauss error
per panel is at most $4M\rho/((\rho-1)(\rho^{64}-1))$ with
$M\le4.9\times10^4$, i.e.\ $\le1.3\times10^{-47}$, and the total
per-entry quadrature error is $\varepsilon_Q\le1.03\times10^{-44}$.
The error matrix has spectral norm
$\le c_{\rm err}:=N\varepsilon_Q\le2.06\times10^{-42}$.
\end{lemma}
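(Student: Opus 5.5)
The plan is the standard Gauss--Legendre error estimate for analytic integrands (Trefethen's Bernstein-ellipse theorem), applied panel by panel, combined with explicit bounds on the integrand on the ellipse. First, fix a panel $[a,a+\frac14]$ and map it affinely to $[-1,1]$. The Bernstein ellipse $E_\rho$ with $\rho=6.55$ has semi-minor axis $\frac12(\rho-\rho^{-1})\approx3.20$ in normalized units. The map scales by $\frac18$, so the image has half-height $\approx0.400$, and the ellipse lies inside the strip $|\Im t|\le0.4$. I will check that this strip avoids the singularities. The digamma poles of $\psi(\frac14\pm\frac{it}2)$ occur where $\frac14\pm\frac{it}2=-k$, i.e.\ at $t=\pm i(\frac12+2k)$, so the nearest ones are at $|\Im t|=\frac12$. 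The half-sum continuation is therefore analytic on the strip. The prime cosines, the constant $\log\pi$ and $\beta^*$ are entire.

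Next, I bound the integrand on the strip.
\begin{itemize}
\item The Poisson representation with $|P_n(s)|\le1$ gives $|j_n(z)|\le\frac12\int_{-1}^1 e^{|\Im z||s|}ds\le e^{|\Im z|}$. Evaluated at $z=tL$, this yields $|\widehat T_n\widehat T_m|\le4L\sqrt{\nu_n\nu_m}\,e^{2\cdot0.4L}$ by \eqref{eq:Tn}.
\item The comb satisfies $|\cos(t\log n)|\le\cosh(0.4\log n)$ on the strip. This gives $|P_L|\le\sum 2\Lambda(n)n^{-1/2}n^{0.4}$, a finite explicit sum because $\log n<1.6$, i.e.\ $n\le4$.
\item For the digamma half-sum I use the asymptotic $\psi(z)=\log z+O(1/|z|)$ away from the poles, together with a direct bound near $t=0$ where the poles at distance $0.1$ contribute at most $O(10)$. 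The total should stay below $\log(200)+A_L+$ modest, so $|\Psi_L-\beta^*|\le20$.
\end{itemize}
Multiplying these bounds gives the maximum $M$ of the integrand on the ellipse. At the largest orders, $\nu_n\approx398.5$, so $M\le20\cdot4\cdot0.8\cdot398.5\cdot e^{0.64}\approx4.9\times10^4$.

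Then I apply the theorem: for $f$ analytic in $E_\rho$ with $|f|\le M$, the $n$-point Gauss error on $[-1,1]$ is at most $\frac{64M}{15(\rho^{2n}-1)}$, and Trefethen's refined form is $\frac{4M\rho}{(\rho-1)(\rho^{2n}-1)}$ up to the interval-length factor. I use the form stated in the lemma. Rescaling by the panel half-length $\frac18$ only helps. With $n=32$ and $\rho^{64}\approx e^{120.3}$, this gives $\approx1.3\times10^{-47}$ per panel. Summing over $800$ panels and including the factor $\frac1\pi$ gives $\varepsilon_Q\le1.03\times10^{-44}$ per entry. Finally, for an $N\times N$ matrix with entries bounded by $\varepsilon_Q$, the spectral norm is at most the Frobenius norm, which is at most $N\varepsilon_Q$. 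This gives $c_{\rm err}\le2.06\times10^{-42}$.

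I expect the main obstacle to be the rigorous bound $|\Psi_L-\beta^*|\le20$ uniformly on the strip for $\Re t\in[0,200]$, especially near $t=0$, where the poles at $\pm i/2$ sit $0.1$ away. I would first try the partial-fraction form $\psi(z)=-\gamma+\sum_k\bigl(\frac1{k+1}-\frac1{k+z}\bigr)$, isolating the $k=0$ term, which is bounded by $1/0.05$ after halving. If the margin is too tight on the first panels, I would instead verify the bound by interval evaluation on a grid of the strip, using the derivative bound to cover the gaps.
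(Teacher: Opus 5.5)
Your proposal is correct and is essentially the paper's argument, which the paper gives inline in the lemma statement itself. The $\rho=6.55$ ellipse has half-height $\tfrac18\cdot\tfrac12(\rho-\rho^{-1})\approx0.3998$ and so avoids the digamma poles at $\pm i/2$. The Poisson bound together with $|\Psi_L-\beta^*|\le20$ gives $M\le4.9\times10^4$. The Bernstein-ellipse Gauss bound is then summed over $800$ panels, and $\|E\|\le\|E\|_F\le N\varepsilon_Q$; your extra factors $\tfrac18$ and $\tfrac1\pi$ only make the stated numbers more conservative.

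The one input you leave as a plan, $|\Psi_L-\beta^*|\le20$, is likewise only asserted in the paper. When you carry it out, bound the two $k=0$ pole terms of the half-sum jointly rather than separately. Their real parts $\tfrac14\mp\tfrac12\Im t$ sum to $\tfrac12$, so together they contribute at most $\tfrac12(20+1/0.45)\approx11.1$. Bounding each by $10$ separately would already exhaust the budget; done jointly, the total comes to about $17$--$18$.
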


\subsection{Tail and coupling certification}
At the cut order $400$: by \eqref{eq:forbidden},
$\max_{t\le200}|\widehat T_{400}(t)|\le4.3\times10^{-108}$ (at
$\Tsharp=150$: $4.5\times10^{-158}$), and the pole tail is
$<10^{-390}$.  The row and column sums built from these entry bounds
give both tail constants of \eqref{eq:twoblock}: the Gershgorin tail
diagonals exceed $\beta^*-10^{-100}$, i.e.\
$\varepsilon_D\le10^{-100}$, and both factors in the Schur bound
$\|B\|\le(\|B\|_1\|B\|_\infty)^{1/2}$ are dominated by the same
super-exponential bounds, giving $\varepsilon_B\le10^{-100}$.  Orders
beyond $400$ therefore cost a total of $2\times10^{-100}$ against the
certified constant.

\subsection{Verified positive semidefiniteness}

\begin{lemma}[Cholesky residual]\label{lem:psd}
Let $M'$ be symmetric and suppose a floating-point Cholesky
factorization produces $\tilde L$ with
$\|M'-\tilde L\tilde L^{\mathsf T}\|_\infty\le r$ (residual computed
in the same arithmetic, with product-rounding slack $s$).  Then
$\lammin(M')\ge-(r+s)$.
\end{lemma}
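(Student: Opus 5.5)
The plan is to write $M'=\tilde L\tilde L^{\mathsf T}+E$ with $E:=M'-\tilde L\tilde L^{\mathsf T}$. This splits the claim into two pieces: the Gram part is positive semidefinite with no conditions, and the perturbation $E$ must be controlled in the spectral norm using the computed residual.

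First, $\tilde L\tilde L^{\mathsf T}\succeq0$ holds exactly, whatever the rounding errors in $\tilde L$ were. For every real $x$,
\[
x^{\mathsf T}\tilde L\tilde L^{\mathsf T}x=|\tilde L^{\mathsf T}x|^2\ge0 .
\]
So the quality of the factorization never needs to be analysed; only the residual matters.

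Second, $E$ is symmetric because both $M'$ and $\tilde L\tilde L^{\mathsf T}$ are. For a symmetric matrix the spectral radius is bounded by any induced norm, and in particular $\|E\|_2\le\|E\|_\infty$, the maximum absolute row sum. One can see this from Gershgorin's theorem, or from $\|E\|_2\le(\|E\|_1\|E\|_\infty)^{1/2}=\|E\|_\infty$, since $\|E\|_1=\|E\|_\infty$ by symmetry; this is the same Schur-test device used for $B$ in the proof of Theorem~\ref{thm:reduction}. Weyl's inequality then gives
\[
\lammin(M')\ge\lammin(\tilde L\tilde L^{\mathsf T})-\|E\|_2\ge-\|E\|_\infty .
\]

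Third, I pass from the true residual to the computed one. The quantity actually evaluated is a floating-point residual $\tilde E$, obtained by forming $\tilde L\tilde L^{\mathsf T}$ and subtracting. The product-rounding slack $s$ is defined precisely so that $\|E-\tilde E\|_\infty\le s$. This can be justified by a standard a priori bound on rounding in inner products, of order $n\,u\,\||\tilde L||\tilde L|^{\mathsf T}\|_\infty$ with unit roundoff $u$, or by computing the residual in interval arithmetic, in which case $s$ is the width of the enclosure. The triangle inequality then gives $\|E\|_\infty\le r+s$, and hence $\lammin(M')\ge-(r+s)$.

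The only delicate step is this last one: making sure that $s$ really bounds the gap between the exact residual and the computed one, including the rounding in the final subtraction. I would discharge it by stating explicitly that the residual is enclosed by \texttt{mpmath} interval arithmetic, or else by quoting the classical Higham bound for dot products at the working precision. Everything else is elementary linear algebra.

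In the application, $M'$ is taken to be the shifted matrix $A-\lambda_0 I$. The error matrix from Lemma~\ref{lem:quad} has spectral norm at most $c_{\rm err}$, so applying the lemma to $A-\lambda_0 I$ gives $\lammin(A)\ge\lambda_0-(r+s)-c_{\rm err}$. That is exactly the input Theorem~\ref{thm:reduction} needs.
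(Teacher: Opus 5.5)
Your argument is correct and is the paper's proof ($\tilde L\tilde L^{\mathsf T}\succeq0$ plus Weyl's inequality). You make explicit two steps the paper leaves implicit: first, $\|E\|_2\le\|E\|_\infty$ for symmetric $E$, which is why $\|\cdot\|_\infty$ has to be read as the induced row-sum norm; second, $\|E\|_\infty\le r+s$. One labeling slip in your last paragraph: the Cholesky is run on the computed matrix $\tilde A$, not the exact $A$. The paper factors $\tilde A-(\lambda_0+c_{\rm err})I$ and then uses $\|A-\tilde A\|\le c_{\rm err}$ to get $\lammin(A)\ge\lambda_0-(r+s)$, whereas factoring $\tilde A-\lambda_0 I$ gives your equally valid $\lambda_0-(r+s)-c_{\rm err}$.
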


\begin{proof}
$\tilde L\tilde L^{\mathsf T}\succeq0$ and Weyl's inequality.
\end{proof}

The error budget must distinguish the exact leading block $A$, defined
by the integrals, from the computed matrix $\tilde A$;
Lemma~\ref{lem:quad} gives $\|A-\tilde A\|\le c_{\rm err}$.  Applying
Lemma~\ref{lem:psd} to $\tilde A-(\lambda_0+c_{\rm err})I$ with
$\lambda_0=9\times10^{-18}$: the factorization succeeds, with
$r=1.06\times10^{-50}$ and $s=3.6\times10^{-43}$, so
$\lammin(\tilde A)\ge\lambda_0+c_{\rm err}-(r+s)$ and hence, by
Weyl's inequality,
\[
\lammin(A)\;\ge\;\lammin(\tilde A)-c_{\rm err}
\;\ge\;\lambda_0-(r+s)
\;\ge\;9\times10^{-18}-4\times10^{-43}.
\]
Feeding this into Theorem~\ref{thm:reduction} (with
$\varepsilon_D,\varepsilon_B\le10^{-100}$ from the previous
subsection, and $\beta^*-\varepsilon_D\gg\lambda_0$) gives
\[
Q(f)\;\ge\;\bigl(\lambda_0-(r+s)-\varepsilon_B\bigr)\|f\|_2^2
\;\ge\;8.9\times10^{-18}\,\|f\|_2^2 ,
\]
which proves Theorem~\ref{thm:L08} with an explicit positive margin
of $10^{-19}$ to spare. \qed

\subsection{Independent checks}
(a) An earlier, independent certification at $\Tsharp=150$
($\beta^*=0.2241$) gives $\lammin\ge1.2\times10^{-18}$.
(b) Monotonicity: $R_{\Tsharp}$ increases pointwise in $\Tsharp$, and
the certified constants respect it, as does the window floor:
$1.2\times10^{-18}\le\lammin(R_{150})$,
$9\times10^{-18}\le\lammin(R_{200})$, and both lie below
$\lammin(Q|_{\rm window})=1.656\times10^{-17}$, the last value from
the exact time-domain assembly of Section~\ref{sec:validation}.
(c) The reference (non-certified) spectrum of $R_{150}$ has bottom
eigenvalues $1.356\times10^{-18},\,2.32\times10^{-12},\,
2.9\times10^{-7},\,2.1\times10^{-3},\,\beta^*,\dots$ (the first two by
inverse iteration on the stored matrix, residuals $2.4\times10^{-37}$
and $1.1\times10^{-32}$; an earlier draft quoted $1.78\times10^{-18}$
for the first, a stale value which a failed Cholesky at shift
$1.5\times10^{-18}$ already contradicted): four collective
near-null directions sit above the floor, and the bottom eigenvector
concentrates on Legendre orders $0$--$10$, carrying mass
$5\times10^{-76}$ at orders $\ge300$, so the truncation is converged
with a very large margin.
(d) Externally, the certified upper bound
$\lmin(0.8)\le2.27\times10^{-17}$ of Table~\ref{tab:main}, obtained
unconditionally on the geometric side by the pipeline of
Section~\ref{sec:geocert}, sits consistently above the certified
lower bound and within $40\%$ of the measured window floor
$1.656\times10^{-17}$ (the residual difference is attributable to the
boundary condition $f(\pm L)=0$ of the sine basis used there, harmless
for upper bounds).

\section{Parity of the window ground state}\label{sec:parity}

Everything so far concerns even test functions.  This section resolves
the odd sector at support $1.6$ and, with it, a spectral question
recently isolated by Connes--Consani--Moscovici \cite{CCMZeta2025}.
Their construction realizes approximations to the zeta zeros from the
ground state of the semilocal Weil form at scale $\lambda$, and its
theoretical foundation, the zero-localization theorem of
Connes--van Suijlekom \cite{ConnesVanSuijlekom2025}, applies under the
hypothesis that the bottom of the spectrum is a \emph{simple}
eigenvalue whose eigenfunction is \emph{even} (invariant under
$u\mapsto u^{-1}$).  They list this as the first missing step of the
program; an independent reproduction has verified it empirically at
many scales, but not rigorously at any.  We certify it at
$\lambda=e^{0.8}$, where the semilocal form has prime content
$\{2,3,4\}$ and coincides, place by place, with the window form
studied in this paper (up to the normalization dictionary, whose
line-by-line verification against \cite{CCMZeta2025} we have not
carried out; the statements below, being ordering statements, are
invariant under positive rescaling of the form).

\subsection{Parity decoupling}

\begin{lemma}[Parity splitting]\label{lem:parity}
Let $f=f_e+f_o$ be real with $\supp f\subseteq[-L,L]$, split into even
and odd parts.  Then
\[
Q(f)\;=\;Q(f_e)+Q(f_o),
\]
where $Q(f_e)$ has the pole term $+2\bigl(\int f_e\cosh(x/2)\,dx\bigr)^2$
of \eqref{eq:Q}, while
\[
Q(f_o)\;=\;-\,2\Bigl(\int f_o\sinh(x/2)\,dx\Bigr)^{\!2}
+\frac1{2\pi}\int_{\R}|F_o(t)|^2\,\Psi_L(t)\,dt :
\]
\emph{the pole term changes sign in the odd sector}.  Moreover, for
complex $f$, $Q(f)=Q(\Rey f)+Q(\operatorname{Im} f)$.
\end{lemma}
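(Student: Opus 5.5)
The plan is to work with the general (not necessarily even) form of the explicit formula. For $f$ with $\supp f\subseteq[-L,L]$, set $g=f\star\tilde f$ and $F=\fhat$. The geometric side then reads
\[
Q(f)=\widehat g(i/2)+\widehat g(-i/2)+\frac1{2\pi}\int_{\R}\widehat g(t)\,\Psi_L(t)\,dt ,
\]
where $\widehat g(t)=|F(t)|^2$ on the real line. The prime terms $g(\pm\log n)$ and the archimedean term are already folded into $\Psi_L$, because the distributions $\Lambda(n)n^{-1/2}(\delta_{\log n}+\delta_{-\log n})$ and the archimedean kernel are symmetric. I would first check that this reduces to \eqref{eq:Q} for even real $f$. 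In that case $\widehat g(\pm i/2)=F(i/2)F(-i/2)$-type products become $F(i/2)^2$, giving $2F(i/2)^2$. This fixes the normalization of the pole term as $\widehat g(i/2)+\widehat g(-i/2)$, with $\widehat g(z)=F(z)\overline{F(\bar z)}$ for real $f$.

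Next, the pole term. For real $f$, $F(\pm i/2)=\int f(x)e^{\mp x/2}dx=\int f\cosh(x/2)\mp\int f\sinh(x/2)$. With the normalization above, the pole contribution is $2\,\Re\bigl(F(i/2)F(-i/2)\bigr)$, which equals $2\bigl[(\int f\cosh)^2-(\int f\sinh)^2\bigr]$. Here $\int f\cosh(x/2)=\int f_e\cosh(x/2)$ and $\int f\sinh(x/2)=\int f_o\sinh(x/2)$, since the integrand of wrong parity integrates to zero. So the pole term splits additively, with $+2(\int f_e\cosh)^2$ in the even sector and $-2(\int f_o\sinh)^2$ in the odd sector.

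For the multiplier part, $F=F_e+F_o$ with $F_e$ real and even and $F_o=i\times$(real odd function). Hence $|F|^2=|F_e|^2+|F_o|^2+2\Re(F_e\overline{F_o})$, and the cross term is $2F_e\cdot\Re(\overline{F_o})=0$ pointwise, since $F_o$ is purely imaginary. This gives the splitting without even using that $\Psi_L$ is even. (As a check, the cross term would in any case be an odd function of $t$ integrated against the even $\Psi_L$.) Combining the two parts gives $Q(f)=Q(f_e)+Q(f_o)$, where $Q(f_o)$ has the displayed form.

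For complex $f=a+ib$ with $a,b$ real, the hermitian extension of the form is $Q(f)=\widehat g(i/2)+\widehat g(-i/2)+\frac1{2\pi}\int|F|^2\Psi_L$. Here $F=A+iB$ with $A=\hat a$ and $B=\hat b$. The cross term in $|F|^2$ is $2\Re(\overline{A}\,iB)=2\Im(A\overline B)$. It is odd in $t$ because $\overline{A(t)}=A(-t)$ and $\overline{B(t)}=B(-t)$, so it integrates to zero against the even $\Psi_L$. Similarly the pole term is $F(i/2)\overline{F(\overline{i/2})}$ plus its reflection, which expands into the $a$-part, the $b$-part, and imaginary cross terms $\pm i(A(i/2)B(-i/2)-B(i/2)A(-i/2))$ that cancel between the two poles.

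The main obstacle is settling the correct sesquilinear normalization of the pole term for non-even $f$, namely the form $\widehat g(i/2)+\widehat g(-i/2)$ with $\widehat g(z)=F(z)\overline{F(\bar z)}$. I would pin it down against \cite{Bombieri2000} and confirm that it reproduces $2F(i/2)^2$ in the even case.
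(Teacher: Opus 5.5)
Your proposal is correct and follows the paper's proof essentially step for step. The pole term $\widehat g(i/2)+\widehat g(-i/2)$ with $F(\pm i/2)=c\mp s$ gives $2(c^2-s^2)$. The multiplier cross term vanishes pointwise because $F_e$ is real and $F_o$ purely imaginary. For complex $f$, the odd cross term integrates to zero against the even $\Psi_L$ and the imaginary pole cross terms cancel. The normalization you flag as the main obstacle needs no outside source: computing the transform of $g=f\star\tilde f$ directly gives $\widehat g(z)=F(z)\overline{F(\bar z)}$, which reduces to $F(z)F(-z)$ for real $f$, the form the paper uses.
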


\begin{proof}
With $g=f\star\tilde f$ one has $\hat g(z)=\hat f(z)\hat f(-z)$, so the
pole term is $\hat g(i/2)+\hat g(-i/2)=2\hat f(i/2)\hat f(-i/2)$.
Writing $\hat f(\mp i/2)=\int f e^{\pm x/2}\,dx=c\pm s$ with
$c=\int f_e\cosh(x/2)$, $s=\int f_o\sinh(x/2)$ gives
$2(c^2-s^2)$: no cross term, and the stated signs.  In the multiplier
term, $F_e$ is real even and $F_o$ is imaginary odd, so
$|F|^2=|F_e|^2+|F_o|^2$ pointwise.  For the complex statement, the
cross term of the multiplier is $\operatorname{Im}(\overline{F_r}F_i)$, odd in $t$,
and integrates to zero against the even symbol; the cross terms of the
two poles cancel in pairs.
\end{proof}

The sign reversal was validated independently of this computation: for
a random odd polynomial, the time-domain pole term
$2\int g(y)\cosh(y/2)\,dy$ computed by direct autocorrelation agrees
with $-2(\int f\sinh(x/2)\,dx)^2$ to relative error $2\times10^{-11}$
(script \texttt{pwl\_parity\_recon.py}).

The one-stroke reduction (Theorem~\ref{thm:reduction}) holds verbatim
in the odd sector: the inequality \eqref{eq:R} only discards the
positive multiplier tail beyond $\Tsharp$, which is parity-blind, and
the pole term (of either sign) sits inside the retained matrix.  The
tail and coupling bounds of Section~\ref{sec:reduction} are unchanged
(the odd pole tail obeys the same forbidden-region bound, with $\sinh$
majorized by $\cosh$).

\subsection{Certified constants}

The odd-sector reduced matrix at $\Tsharp=150$
($\beta^*=0.2241\ldots$, $200$ odd Legendre modes $1,3,\dots,399$,
$50$-digit assembly, quadrature budget
$c_{\rm err}=1.6\times10^{-42}$) has
\[
\lammin(M_{\rm odd})=9.1183\times10^{-15}
\quad\text{(inverse iteration, residual }2.3\times10^{-30}),
\]
and the verified Cholesky residual (Lemma~\ref{lem:psd}) at shift
$8.2065\times10^{-15}$ succeeds with $r=7.9\times10^{-51}$, so
\begin{equation}\label{eq:oddlower}
Q(f)\;\ge\;8.2065\times10^{-15}\,\|f\|_2^2
\qquad\text{for all real odd }f,\ \supp f\subseteq[-0.8,0.8].
\end{equation}

Upper bounds require the exact form rather than the reduced one (the
reduction discards a positive term).  Both integrals of the exact
assembly live on compact intervals: the archimedean identity of
Section~\ref{sec:validation} and shift-correlation integrands that are
polynomials of degree $\le239$, integrated exactly by Gauss--$160$.
Evaluating the exact Rayleigh quotient at the (truncated) bottom
vectors of the reduced matrices gives, for the window infima of the
two sectors,
\[
\lambda_1^{\rm even}\le2.5223\times10^{-16},
\qquad
\lambda_1^{\rm odd}\le2.3470\times10^{-14}.
\]
As an independent check, the zeros-side partial sums
$2\sum_{\gamma\le\gamma_{300}}F(\gamma)^2$ on the same vectors are
$1.767\times10^{-16}$ and $2.071\times10^{-14}$: below the Rayleigh
values, as they must be, by $12$--$30\%$, the right size for the
missing tail above $\gamma_{300}\approx541$.  (The zeros enter only
this check, not the certificates.)

For the even-sector gap, apply the Courant--Fischer principle: if the
restriction of $M_{\rm even}-s_2I$ to \emph{any} subspace of
codimension one is positive semidefinite, then
$\lambda_2(M_{\rm even})\ge s_2$.  Taking the orthogonal complement of
the approximate ground vector (an explicit Householder frame; no
orthogonality of the computed frame is assumed, full rank suffices)
and running the verified Cholesky on the restricted matrix at
$s_2=2.0857\times10^{-12}$ succeeds with residual $1.1\times10^{-50}$
and frame-multiplication slack $5.1\times10^{-43}$; the reference
value from deflated inverse iteration is
$\lambda_2(M_{\rm even})=2.3175\times10^{-12}$ (residual
$1.1\times10^{-32}$).  The lift from the $200$-mode head to the full
window form is the block-perturbation argument of
Theorem~\ref{thm:reduction}, at cost below $10^{-100}$.

\subsection{The parity theorem}

\begin{theorem}[Simple even ground state, support $1.6$]
\label{thm:parity}
Let $\lambda_1^{\rm even},\lambda_2^{\rm even}$ denote the first two
min--max values of $Q(f)/\|f\|_2^2$ over real even $f$ with
$\supp f\subseteq[-0.8,0.8]$, and $\lambda_1^{\rm odd}$ the infimum
over real odd $f$.  Then
\[
8.9\times10^{-18}\;\le\;\lambda_1^{\rm even}\;\le\;2.523\times10^{-16},
\qquad
\lambda_2^{\rm even}\;\ge\;2.085\times10^{-12},
\]
\[
8.206\times10^{-15}\;\le\;\lambda_1^{\rm odd}\;\le\;2.347\times10^{-14}.
\]
In particular the bottom of the window spectrum lies in the even
sector with clearance at least a factor $32$ against the odd sector
and $8\times10^{3}$ against the second even value: the ground state
is simple, and even.
\end{theorem}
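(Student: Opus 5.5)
The proof assembles certified pieces already established, in the order the statement lists them.

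\emph{Even sector.} The lower bound $\lambda_1^{\rm even}\ge8.9\times10^{-18}$ is Theorem~\ref{thm:L08}. The upper bound $2.523\times10^{-16}$ is variational (Remark~\ref{rem:variational}). We evaluate the exact form $Q$, not the reduced form $R$, at an explicit truncated bottom vector of the reduced matrix. The assembly is the time-domain one of Lemma~\ref{lem:arch}, with exact Gauss quadrature on the polynomial shift correlations, and the quotient is enclosed in interval arithmetic. Its upper endpoint gives the bound.

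For $\lambda_2^{\rm even}$ we use Courant--Fischer on the reduced form. If $M_{\rm even}-s_2I$ is positive semidefinite on some codimension-one subspace, then the second min--max value of $R$ is at least $s_2$. Since $Q\ge R$ on the whole window (Theorem~\ref{thm:reduction}), the second min--max value of $Q$ is also at least $s_2$. The codimension-one subspace is the span of an explicit full-rank frame complementary to the approximate ground vector. Positivity on it is certified by Lemma~\ref{lem:psd} applied to the compressed matrix $W^{\mathsf T}(\tilde M-s_2I)W$. The budget includes the residual, the frame-multiplication slack and the quadrature error $c_{\rm err}$, the last entering through $\|W\|^2$ and controlled using full rank.

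The certificate lives on the $200$-mode head. To lift it to the full space we apply the two-block bound \eqref{eq:twoblock} to the compressed head together with the tail. The tail block has $\lammin\ge\beta^*-\varepsilon_D$ and the coupling is at most $\varepsilon_B$, so the lift costs at most $2\times10^{-100}$. Together with the reported shift $2.0857\times10^{-12}$, this yields $\lambda_2^{\rm even}\ge2.085\times10^{-12}$.

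\emph{Odd sector.} Lemma~\ref{lem:parity} reduces $Q$ on odd $f$ to the odd pole term, now with a minus sign, plus the parity-blind multiplier. Theorem~\ref{thm:reduction} then carries over to this sector, since it discards only the tail of the multiplier beyond $\Tsharp$ and keeps the negative pole term inside the retained matrix. The verified Cholesky at shift $8.2065\times10^{-15}$ gives \eqref{eq:oddlower}, after subtracting $c_{\rm err}$ and the $10^{-100}$ lift cost. The odd upper bound $2.347\times10^{-14}$ is again an exact-form Rayleigh quotient at an explicit odd vector.

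\emph{Conclusion.} By Lemma~\ref{lem:parity}, the form on real $f$ (and, for complex $f$, on the real and imaginary parts) is an orthogonal direct sum of the even and odd sectors. Hence the bottom of the window spectrum is $\min(\lambda_1^{\rm even},\lambda_1^{\rm odd})$. The comparisons
\[
8.206\times10^{-15}/2.523\times10^{-16}>32,\qquad 2.085\times10^{-12}/2.523\times10^{-16}>8\times10^{3}
\]
show that this minimum is $\lambda_1^{\rm even}$, strictly separated both from $\lambda_1^{\rm odd}$ and from $\lambda_2^{\rm even}$. So whenever the infimum is attained, the minimizer is even and unique up to scalar. Strictly, ``ground state'' presupposes attainment. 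If needed, attainment follows because $Q-\beta^*\|\cdot\|^2$ is compact plus positive on the window, so the spectrum below $\beta^*$ is discrete.

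\emph{Main obstacle.} The hardest step is the $\lambda_2$ certificate. The frame is only approximately orthogonal, so the rounding errors of the products $W^{\mathsf T}\tilde MW$ and the factor $\|W\|^2$ multiplying $c_{\rm err}$ must be tracked explicitly. One must also check that Courant--Fischer is applied to a subspace of the full space, with the tail directions included, rather than just of the head. I would handle this by forming the compressed two-block matrix of the subspace spanned by the frame and all tail modes, and applying \eqref{eq:twoblock} to it.
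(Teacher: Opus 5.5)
Your proposal is correct and follows essentially the paper's own argument. It uses Theorem~\ref{thm:L08} and exact-form Rayleigh quotients for the even enclosure, and the odd-sector reduced matrix certified by Lemma~\ref{lem:psd} at shift $8.2065\times10^{-15}$. The gap comes from Courant--Fischer on a full-rank frame complementary to the approximate ground vector at $s_2=2.0857\times10^{-12}$, lifted from the $200$-mode head by the two-block bound \eqref{eq:twoblock}.

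Your explicit codimension-one subspace (frame plus all tail modes) and the attainment remark via the compact-plus-positive structure of $Q-\beta^*\|\cdot\|^2$ spell out what the paper leaves implicit. One thing to record is a quantitative lower bound on $\sigma_{\min}(W)$ rather than bare full rank. That is what converts the frame-coordinate Cholesky residual and $c_{\rm err}\|W\|^2$ into the small slack between $2.0857$ and $2.085$.
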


\begin{corollary}[Positivity for arbitrary test functions]
\label{cor:allf}
For every complex $f\in L^2(\R)$ with $\supp f\subseteq[-0.8,0.8]$,
\[
Q(f)\;\ge\;8.9\times10^{-18}\,\|f\|_2^2 .
\]
Equivalently, the Weil functional is nonnegative on all
$g=f\star\tilde f^*$ with $\supp g\subseteq[-1.6,1.6]$, with no parity
or reality restriction on $f$.
\end{corollary}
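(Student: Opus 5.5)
The plan is to reduce the complex case to the real case with Lemma~\ref{lem:parity}, and then to split each real function into even and odd parts. Let $f$ be complex with $\supp f\subseteq[-0.8,0.8]$ and write $f=u+iv$, with $u=\Rey f$ and $v=\operatorname{Im} f$ both real and supported in the same window. By the complex clause of Lemma~\ref{lem:parity}, $Q(f)=Q(u)+Q(v)$. Pointwise we also have $|f|^2=u^2+v^2$, so $\|f\|_2^2=\|u\|_2^2+\|v\|_2^2$. It therefore suffices to prove $Q(w)\ge8.9\times10^{-18}\|w\|_2^2$ for every real $w$ supported in $[-0.8,0.8]$.

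For real $w$, write $w=w_e+w_o$ and apply the first clause of Lemma~\ref{lem:parity}, which gives $Q(w)=Q(w_e)+Q(w_o)$ with no cross term. The two parts are orthogonal in $L^2$ (the integral of an even function times an odd one vanishes), so $\|w\|_2^2=\|w_e\|_2^2+\|w_o\|_2^2$. The even part has support in $[-0.8,0.8]$, so Theorem~\ref{thm:L08} gives $Q(w_e)\ge8.9\times10^{-18}\|w_e\|_2^2$. The odd part also has support in $[-0.8,0.8]$, so the certified odd-sector bound \eqref{eq:oddlower} gives $Q(w_o)\ge8.2065\times10^{-15}\|w_o\|_2^2$, which is at least $8.9\times10^{-18}\|w_o\|_2^2$. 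Adding the two sector bounds and using orthogonality yields the claim for $w$. Applying this to $u$ and $v$ and summing proves the corollary. The equivalent formulation in terms of $g=f\star\tilde f^*$ is just the definition of $Q(f)$ as the Weil functional evaluated at that autocorrelation, whose support lies in $[-1.6,1.6]$.

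One point needs checking. Theorem~\ref{thm:L08} and \eqref{eq:oddlower} are stated for $L^2$ functions, while the Weil form is defined through the explicit formula, originally for continuous test functions. I would rely on the geometric-side representation \eqref{eq:Q}, which is continuous in $f$ with respect to the $L^2$ norm, since $\Psi_L$ grows only logarithmically and $F$ has the required decay for smooth $f$. I would then pass to general $L^2$ functions by density of smooth functions with the same support, mollifying after a slight shrinking of the support. This is the same convention Theorem~\ref{thm:L08} already uses, so nothing new is required.

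The main obstacle is not in this corollary at all: it lies in the odd-sector certificate \eqref{eq:oddlower}. That certificate needs the one-stroke reduction to hold in the odd sector even though the pole term there is negative. This is fine because the reduction discards only the parity-blind positive multiplier tail beyond $\Tsharp$, and the pole term stays inside the certified matrix.
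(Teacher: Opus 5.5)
Your proof is correct and is essentially the paper's own proof: the complex clause of Lemma~\ref{lem:parity} reduces to real $f$, the parity splitting together with orthogonality of even and odd parts reduces to the two sectors, and Theorem~\ref{thm:L08} and \eqref{eq:oddlower} finish. One caution on your side remark: $Q$ is not continuous in the $L^2$ norm, because $\Psi_L$ grows like $\log|t|$ and so the form is unbounded on $L^2[-L,L]$; but no density step is needed, since Theorem~\ref{thm:L08} and \eqref{eq:oddlower} are already stated for all $L^2$ functions of the given support, and $\Psi_L$ is bounded below, so $Q$ is well defined in $(-\infty,+\infty]$.
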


\begin{proof}
Lemma~\ref{lem:parity}, Theorem~\ref{thm:L08} on the even parts,
\eqref{eq:oddlower} on the odd parts.
\end{proof}

Two remarks.  First, the certificate is geometric-side only; unlike
the upper-bound pipeline of Section~\ref{sec:pipeline}, nothing here
depends on zero data.  Second, the parity asymmetry is systematic.
Table~\ref{tab:parity} traces both sector floors across
$L\in[0.5,1.2]$ by converged exact assembly (reference values, not
certificates; obtained in the Legendre basis per sector, with an
$(N{-}12)$-mode convergence check at every entry).  Three features
stand out.  (i) The even floors sit $20$--$40\%$ below the certified
sine-basis upper bounds of Table~\ref{tab:main} at every $L$, as they
must.  (ii) The bottom of the window spectrum alternates parity,
$\lambda_1^{\rm even}<\lambda_1^{\rm odd}<\lambda_2^{\rm even}
<\lambda_2^{\rm odd}$ at every scanned $L$, the same alternation as
the prolate eigenfunctions of time--band limiting.  (iii) The sector
ratio $\lambda_1^{\rm odd}/\lambda_1^{\rm even}$ grows almost exactly
geometrically, $\approx10^{2.07\,L+1.32}$ over the scanned range
(residuals of $\log_{10}$ below $0.06$).  Its mechanism is \emph{not}
the pole sign, which in fact favors the odd sector (the pole term
adds $+2c^2$ to the even form and $-2s^2$ to the odd one); on the
zeros side the asymmetry must instead reflect the constraint
$F_o(0)=0$, which denies odd test functions the zero-free frequency
interval $[0,\gamma_1)$ in which even minimizers park most of their
mass.  We leave its quantitative law, and whether the odd sector
obeys a shifted version of \eqref{eq:law}, to a separate study.  The
second missing step of \cite{CCMZeta2025}, convergence of the
approximate ground-state zeros to the zeta zeros as the scale grows,
is untouched by the present certificate.

\begin{table}[ht]
\centering
\caption{Sector floors of the window form by converged exact assembly
(reference values, not certificates).  At $L=0.8$ the two enclosures
certified in this section,
$\lambda_1^{\rm even}\in[8.9\times10^{-18},2.52\times10^{-16}]$ and
$\lambda_1^{\rm odd}\in[8.21\times10^{-15},2.35\times10^{-14}]$,
contain the corresponding entries.  The last row is converged only to
within a factor $\approx2$ ($80$ modes per sector).}
\label{tab:parity}
\begin{tabular}{lllll}
\hline
$L$ & $\lambda_1^{\rm even}$ & $\lambda_1^{\rm odd}$ &
ratio & $\lambda_2^{\rm even}$ \\
\hline
$0.5$ & $9.34\times10^{-7}$  & $1.94\times10^{-4}$  & $208$  & $1.80\times10^{-2}$ \\
$0.6$ & $1.61\times10^{-9}$  & $5.97\times10^{-7}$  & $371$  & $1.03\times10^{-4}$ \\
$0.7$ & $4.18\times10^{-13}$ & $2.56\times10^{-10}$ & $612$  & $8.65\times10^{-8}$ \\
$0.8$ & $1.65\times10^{-17}$ & $1.57\times10^{-14}$ & $949$  & $8.38\times10^{-12}$ \\
$0.9$ & $4.14\times10^{-23}$ & $7.22\times10^{-20}$ & $1746$ & $7.25\times10^{-17}$ \\
$1.0$ & $5.88\times10^{-30}$ & $1.49\times10^{-26}$ & $2535$ & $2.18\times10^{-23}$ \\
$1.1$ & $2.04\times10^{-38}$ & $7.68\times10^{-35}$ & $3769$ & $1.54\times10^{-31}$ \\
$1.2$ & $7.94\times10^{-49}$ & $4.76\times10^{-45}$ & $5997$ & $1.46\times10^{-41}$ \\
\hline
\end{tabular}
\end{table}

\section{An exploratory computation at support 2.38, and a
retraction}\label{sec:L119}

An earlier draft of this work claimed a certified positivity theorem
at $L=1.19$ (support $2.38$).  The claim rested on substituting the
per-prime constant $\Aeff(1.19)=4.6948$ for $A_L(1.19)=7.0750$ in the
envelope, which lowered the apparent threshold to
$2\pi e^{\Aeff}=687$ and made the run $\Tsharp=1100$,
$\beta^*=0.46948$ seem sufficient.  As Remark~\ref{rem:nosharp}
explains, that substitution bounds the comb in the wrong direction and
is invalid.  By Lemma~\ref{lem:sharp} the true threshold is
$T_1=2\pi e^{A_L}\approx7.4\times10^3$, and a positive $\beta^*$ of
the same size requires $\Tsharp\approx1.2\times10^4$.  Since
$\Psi_L\ge\beta^*$ is not established on $[1100,\,1.2\times10^4]$, the
inequality $Q\ge R$ of \eqref{eq:R} is not proved for that run, and
nothing about $Q$ follows from it.  We retract the claim.

We nevertheless report the computation, for two reasons.  The first is
as data.  The assembled matrix ($N=950$ even orders to $1898$, dps
$70$, Gauss-$40$ on $2200$ panels of width $\frac12$, adaptive
forbidden-region cutoff, $10$ CPU-hours) is verified positive definite,
with a Cholesky shift ladder passing at
$10^{-60},10^{-55},10^{-52},10^{-50},10^{-48}$ and failing at
$10^{-46}$, so $\lammin\in(10^{-48},10^{-46})$, with residuals
$r=1.7\times10^{-70}$ and $s=2.9\times10^{-62}$ at the passing shift.
This is consistent with Weil positivity at support $2.38$, as under RH
it must be, and the scale $10^{-48}$ agrees with the law
\eqref{eq:law}, which predicts $\lmin(1.19)\approx10^{-47}$; but it
certifies nothing.  The second reason is methodological.  A first
attempt at dps $50$ could only conclude $\lammin<10^{-33}$, positive
definite but unresolvable, and it was the decay law of
Section~\ref{sec:results} that dictated the right response: raise the
precision to dps $70$, rather than distrust the sign.

\emph{The corrected cost of support $2.38$.}  The failure is real, not
merely proof-technical.  A direct scan of
$\log\frac t{2\pi}-\frac1t-P_L(t)$ (float64, step $10^{-3}$) shows
that on $[1100,\,2\pi e^{A_L+\beta^*}]$ this quantity drops below
$\beta^*$ on a set of measure $\approx18$, by as much as $2.0$ near
$t\approx1550$.  The comb has Lipschitz constant
$\sum_n 2\Lambda(n)\log n/\sqrt n\le11$ here, so a step of $10^{-3}$
can miss at most $0.011$, and the dip exceeds that by a factor of
nearly two hundred.  The Binet estimates behind
Lemma~\ref{lem:envelope} are moreover two-sided, the digamma term
equalling $\log\frac t{2\pi}$ to within $2/t\le2\times10^{-3}$ on this
range.  Hence $\Psi_L$ itself, and not merely its envelope, dips below
$\beta^*$ by about $2$ there, and $Q\ge R$ genuinely fails for the run
above.  The last such dip sits at $t_0\approx8.48\times10^3$, with
margin $\ge0.06$ from there to the crude-envelope takeover at
$1.19\times10^4$.  A valid one-stroke certificate at $L=1.19$
therefore needs $\Tsharp\gtrsim8.5\times10^3$ if the pointwise
envelope on $[\Tsharp,2\pi e^{A_L+\beta^*}]$ is replaced by a
certified interval-arithmetic lower bound for $\Psi_L$ itself (the
symbol is an explicit finite expression, so $\min_{[a,b]}\Psi_L$ is
certifiable by adaptive interval evaluation), or
$\Tsharp\approx1.2\times10^4$ with the envelope alone.  Either way it
needs $N\approx eL\Tsharp/2\approx1.4$--$2\times10^4$ Legendre orders
at a working precision near $90$ digits, the law placing the floor
near $10^{-47}$.  That is roughly three orders of magnitude more work
than the run above, out of reach for a pure-Python pipeline but not
for a compiled double-double assembly.  We have not carried it out.

\section{Synthesis: the two-sided enclosure}\label{sec:synthesis}

We combine the two halves here, before developing the upper-bound
pipeline in detail.  At $L=0.8$ the enclosure is
\[
  \underbrace{8.9\times10^{-18}}_{\text{Thm.~\ref{thm:L08}}}
  \;\le\;\lmin(0.8)\;\le\;
  \underbrace{2.27\times10^{-17}}_{\text{Table~\ref{tab:main}}} ,
\]
a certified two-sided determination of an RH-equivalent quantity to
within a factor $2.6$, \emph{with both inequalities unconditional}.
In particular $\inf\sigma(K_{0.8})\le2.27\times10^{-17}$ is an
operator fact, not an observation under RH.  Two conclusions follow.
First, the one-stroke reduction loses little.  The reduced form $R$
of \eqref{eq:R} discards the entire excess of $\Psi_L$ beyond
$\Tsharp$, yet still reaches to within an order of magnitude of the
true window floor: $\lammin(R_{150})=1.356\times10^{-18}$ against the
window value $1.66\times10^{-17}$, a factor $12$.  The loss is
governed by the artificial wells created by the frequency split,
that is, by the set $\{t\le\Tsharp:\Psi_L(t)<\beta^*\}$.  Second, and
conversely, the law \eqref{eq:law} calibrates the certification
effort: it predicts the scale of the bottom eigenvalue of a planned
certificate matrix, and hence the working precision required, an
interplay recorded in Section~\ref{sec:L119}.

\section{The resolution height \texorpdfstring{$\Tstar$}{T*} and the
  expected law}\label{sec:mechanism}

We now turn to the upper-bound half: how small does $\lmin(L)$
actually get?  A function supported in $[-L,L]$ has Fourier transform
of exponential type $L$; by Jensen's inequality its real zeros have
density at most $L/\pi$ in long windows.  The zeros of $\zeta$ have
density $\nu(t)=\frac{1}{2\pi}\ln\frac{t}{2\pi}$ at height $t$, which
reaches the budget $L/\pi$ exactly at
\[
  \Tstar(L)\;=\;2\pi e^{2L}.
\]
Below $\Tstar$ the test function can in principle interpolate, placing
a zero of $\fhat$ on every $\gamma_j$; above $\Tstar$ the zeta zeros
outnumber the available oscillations and $|\fhat|^2$ must leak onto
them.  One therefore expects the minimizer of \eqref{eq:lambdastar} to
suppress $|\fhat(\gamma)|$ for $\gamma<\Tstar$ down to the working
level $\sqrt{\lmin}$ and to pay its cost in the transition region
around $\Tstar$.  Figure~\ref{fig:portrait} confirms this picture in
detail.

How much does the suppression cost?  Killing $N=N(\Tstar)$ zeros with
an exactly exhausted budget is the regime of the Landau--Widom theorem
\cite{LandauWidom1980}: for time--band limiting to a window of area
$K$, the Shannon number, the eigenvalues of the concentration operator
plunge at the rate $\lambda_{K+j}\approx\exp\bigl(-\pi^{2}j/\ln K\bigr)$.
Modelling the suppression of $N$ zeros at saturated budget as an
excursion of order $N$ beyond the Shannon number of the window
suggests
\begin{equation}\label{eq:lwlaw}
  -\ln\lmin(L)\;\asymp\;\frac{\pi^{2}\,N(\Tstar)}{\ln N(\Tstar)}
  \times\text{const},
\end{equation}
a double-exponential decay in $L$ with a logarithmic correction, rather
than the naive ``one nat per killed zero'' guess
$-\ln\lmin\asymp N(\Tstar)$.  Our data decide between the two:
\eqref{eq:lwlaw} holds with overall constant $2\pi^{2}$, reproducing
the four asymptotic points to within $2.7\%$
(Section~\ref{sec:results}).

Note the two heights that now coexist: the resolution height
$\Tstar=2\pi e^{2L}$, which controls how far the minimizer must reach
and hence how small $\lmin$ becomes, and the comb-alignment threshold
$T_1=2\pi e^{A_L}$ with $A_L\sim4e^L$, which controls how far a
certificate must reach.  For $L\gtrsim1$ one has $T_1\gg\Tstar$, so
certifying positivity is exponentially more expensive than defeating
it.  This is the quantitative content of the barrier
(Theorem~\ref{thm:barrier}).

\section{Three generations of float64 experiments and how they fail}
\label{sec:failures}

We summarize the failure modes we encountered before moving to
multiprecision; details and scripts are in the repository
(Section~\ref{sec:repro}).  All three generations discretize
\eqref{eq:lambdastar} by a basis $(\varphi_k)$ of functions supported
in $[-L,L]$ and solve the generalized eigenproblem $Mc=\lambda Gc$
with $M_{jk}=Q(\varphi_j,\varphi_k)$ and $G$ the $L^2$ Gram matrix.

\textbf{Generation 1: hat functions, geometric side.}  Piecewise-linear
hat functions (uniform nodes, or arithmetic nodes at $\log p^k$)
assembled through the geometric side.  Two artifacts appeared.  First,
boundary half-hats make $f(\pm L)\ne0$, so $|\fhat(r)|^2$ decays only
like $r^{-2}$ and the truncated archimedean integral loses a positive
mass, producing spurious negative eigenvalues that are, at first
sight, structurally indistinguishable from a counterexample to RH.
Second, after fixing the boundary condition, the sweep produced a
convincing exponential law $\lmin(L)\sim e^{-2.64 L}$, which later
turned out to be an artifact of the float64 noise floor: the apparent
rate is the local slope of the true (much steeper) curve at the point
where it crosses $10^{-16}$.

\textbf{Generation 2: cubic $B$-splines and Richardson extrapolation.}
Replacing hats by $C^2$ cubic splines improves the spectral decay of
the basis from $r^{-2}$ to $r^{-4}$ per factor and demonstrably lowers
the variational bound at fixed dimension; a Richardson extrapolation
in the knot spacing then appeared to converge to a cleaner exponential
law with rate $1.52$.  The extrapolation, however, was converging to
the noise floor of the ambient discretization, not to $\lmin(L)$:
refining the mesh at fixed $L$ eventually increased the computed
minimum, violating the variational monotonicity that a Galerkin scheme
must obey.  We recommend this monotonicity check as a cheap
self-diagnostic for quadratic-form computations near the precision
floor.

\textbf{Generation 3: zeros side in float64.}  Evaluating
\eqref{eq:zerosside} term by term is cancellation-free, and certified
re-evaluation of the proposed minimizer gives genuine upper bounds
(e.g.\ $\lmin(0.7)\le5.5\times10^{-13}$, a valid bound lying within
$10\%$ of the later multiprecision value $5.00\times10^{-13}$).  But for
$L\gtrsim0.9$ the true values sink below $10^{-16}$ and float64
eigen-solvers return the noise floor: certified bounds stall near
$10^{-16}$ for all $L\in[1.0,2.2]$.  Figure~\ref{fig:history} displays
all three generations against the multiprecision curve.

\begin{figure}[t]
  \centering
  \includegraphics[width=\textwidth]{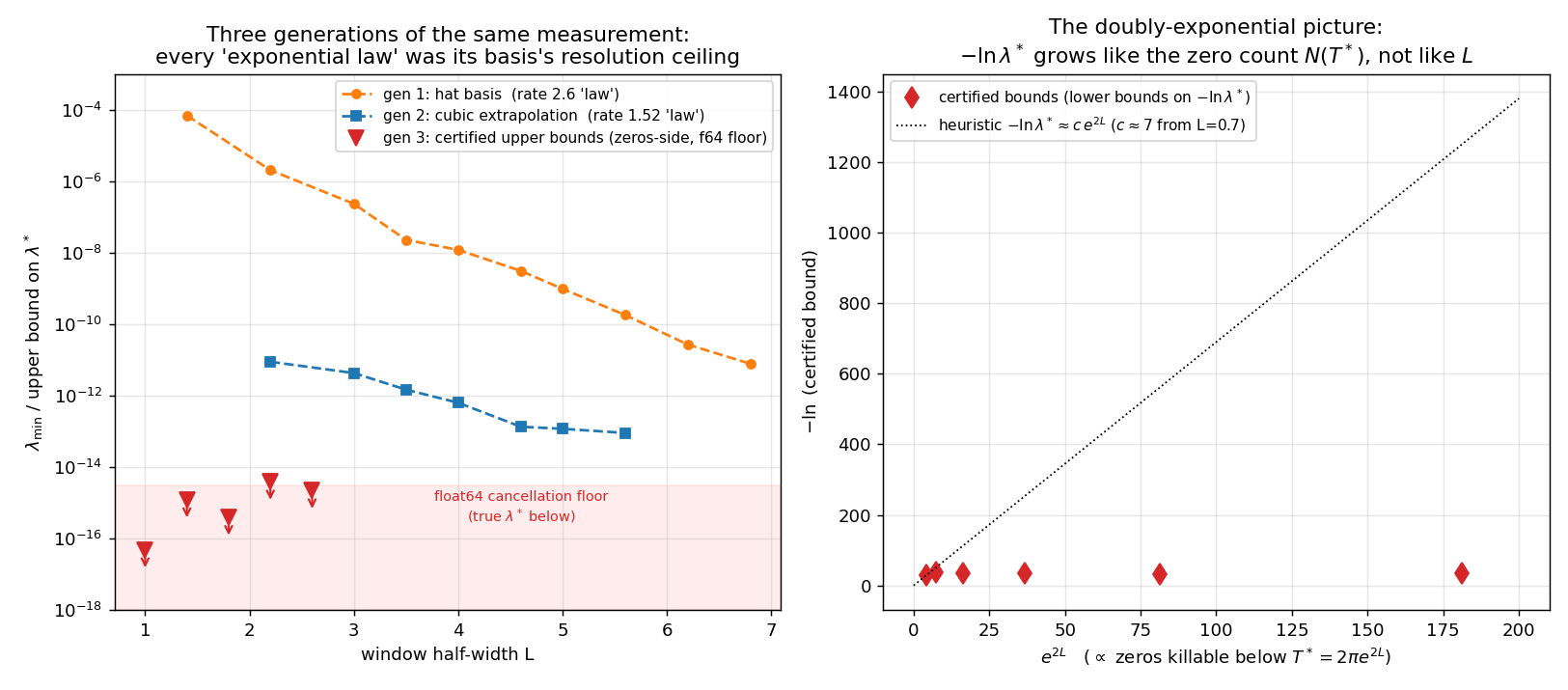}
  \caption{The three generations of float64 measurements.  Apparent
    exponential laws (rates $2.64$ and $1.52$) are local-slope
    artifacts; certified float64 bounds (squares) stall at the noise
    floor near $10^{-16}$.}
  \label{fig:history}
\end{figure}

\section{A certified multiprecision pipeline for upper bounds}
\label{sec:pipeline}

\subsection{Basis and block structure}

Fix $L$ and work with the orthogonal modes
\[
  s_k(u)\;=\;\sin\Bigl(\frac{k\pi(u+L)}{2L}\Bigr),\qquad
  u\in[-L,L],\quad k=1,\dots,K,
\]
which vanish at $\pm L$ and satisfy $\int s_js_k=L\,\delta_{jk}$, so
the Gram matrix is exactly $L\,I$ and the discretized problem is an
ordinary symmetric eigenproblem; no ill-conditioned mass matrix enters
at any precision.  The transforms are closed-form: with
$\alpha_k=k\pi/(2L)$,
\begin{equation}\label{eq:shat}
  \hat s_k(r)
  =\frac{\alpha_k\bigl(e^{-irL}-(-1)^k e^{irL}\bigr)}{\alpha_k^2-r^2}
  =\begin{cases}
    \dfrac{2\alpha_k\cos(rL)}{\alpha_k^2-r^2}, & k \text{ odd},\\[2ex]
    \dfrac{-2i\,\alpha_k\sin(rL)}{\alpha_k^2-r^2}, & k \text{ even},
  \end{cases}
\end{equation}
so the form \eqref{eq:zerosside} splits exactly into two parity blocks
(odd $k$: even real $\fhat$; even $k$: odd imaginary $\fhat$), halving
the dimension.  In all runs the minimizer lies in the odd-$k$ (even
$\fhat$) block; we verified this by solving both blocks for $L\le1.2$
and spot-checking at $L=1.4$.  We take
\[
K=\max\bigl(48,\ \lfloor 5L\Tstar/\pi\rfloor+8\bigr)
\]
and retain the odd $k<K$, so that $\alpha_{\max}\approx2.5\,\Tstar$
once $L\ge0.9$, the floor of $48$ being active (and the margin
correspondingly larger) for $L\le0.8$.  This rule reproduces the
dimension of each of the twelve runs reported below, from $24$ odd
modes at $L=0.5$ to $549$ at $L=2.0$.  Doubling the margin changes
$-\ln\lmin$ by $0.03\%$ at $L=1.6$ (Table~\ref{tab:checks}).

\subsection{Assembly and error budget}

For the odd block the discretized form reads
\[
  A_{jk}\;=\;\sum_{0<\gamma\le\Gamma_0} 8\cos^2(\gamma L)\,
  u_j(\gamma)u_k(\gamma)\;+\;T_{jk},
  \qquad u_k(\gamma)=\frac{\alpha_k}{\alpha_k^2-\gamma^2},
\]
with $\Gamma_0=1419.8$ (the first $1000$ zeros) and $T$ the tail
model of Definition~\ref{lem:tail}.  Every term is non-negative as a
quadratic form; the assembly is cancellation-free by construction.
The three error sources are controlled as follows.

\begin{lemma}[Zero-location error]\label{lem:zeros}
Let $\tilde\gamma_j=\gamma_j+\delta_j$ with $|\delta_j|\le\delta$ be the
ordinates used in the assembly, and let $M_j$ denote the supremum of
$|\fhat\,'|$ on the segment joining $\gamma_j$ to $\tilde\gamma_j$.
Then for every test function $f$,
\[
  \Bigl|\;\sum_j 2\bigl|\fhat(\tilde\gamma_j)\bigr|^{2}
        -\sum_j 2\bigl|\fhat(\gamma_j)\bigr|^{2}\;\Bigr|
  \;\le\;\sum_j\Bigl(4\,\bigl|\fhat(\gamma_j)\bigr|\,M_j\,\delta
         \;+\;2\,M_j^{2}\,\delta^{2}\Bigr).
\]
\end{lemma}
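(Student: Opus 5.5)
The plan is to prove the bound term by term and then sum over $j$. Fix $j$ and write $a=\fhat(\gamma_j)$, $b=\fhat(\tilde\gamma_j)$. Since $\fhat$ is entire, it is in particular $C^1$ along the real segment $[\gamma_j,\tilde\gamma_j]$ (or along the straight segment, in whichever orientation). The mean value inequality for complex-valued functions along a segment then gives
\[
|b-a|\le M_j\,|\tilde\gamma_j-\gamma_j|\le M_j\delta .
\]
We use the integral form $b-a=\int_{\gamma_j}^{\tilde\gamma_j}\fhat\,'(r)\,dr$ and bound it by the length of the segment times the supremum of $|\fhat\,'|$. This avoids the failure of the scalar mean value theorem for complex-valued $\fhat$, which arises when $f$ is not even.

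Next we bound the difference of squared moduli. We have $\bigl||b|^2-|a|^2\bigr|=\bigl||b|-|a|\bigr|\,(|b|+|a|)$. Using $\bigl||b|-|a|\bigr|\le|b-a|$ and $|b|\le|a|+|b-a|$, this gives
\[
\bigl||b|^2-|a|^2\bigr|\le|b-a|\,(2|a|+|b-a|)\le 2|a|M_j\delta+M_j^2\delta^2 .
\]
Multiplying by $2$ produces exactly the $j$-th summand $4|\fhat(\gamma_j)|M_j\delta+2M_j^2\delta^2$ of the statement.

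Finally we sum over $j$ and apply the triangle inequality to the difference of the two sums. Here we interpret the sums as absolutely convergent, which holds for the finite sums actually assembled (zeros up to $\Gamma_0$). For infinite sums the same argument works whenever the right-hand side converges, since then the difference series converges absolutely termwise. If the right-hand side diverges, the inequality is trivial.

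There is no real obstacle here. The only point needing care is the complex-valued mean value step and the use of the supremum over the whole segment, which is why $M_j$ is defined as a supremum on the segment rather than as a value of $\fhat\,'$ at a point.
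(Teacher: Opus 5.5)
Your proof is correct and follows the paper's argument: bound $|b-a|\le M_j\delta$, factor $\bigl||b|^2-|a|^2\bigr|=\bigl||b|-|a|\bigr|(|b|+|a|)\le M_j\delta(2|a|+M_j\delta)$, then multiply by $2$ and sum over $j$. Your use of the integral form of the mean value inequality together with $\bigl||b|-|a|\bigr|\le|b-a|$ is a slightly more careful version of the paper's appeal to ``the mean value theorem''. That extra care is appropriate, since $\fhat$ need not be real-valued.
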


\begin{proof}
Fix $j$ and put $a=|\fhat(\gamma_j)|$, $b=|\fhat(\tilde\gamma_j)|$.  The
mean value theorem gives $|b-a|\le M_j\delta$, whence
\[
  \bigl|b^{2}-a^{2}\bigr|=|b-a|\,(b+a)
  \le M_j\delta\,\bigl(2a+M_j\delta\bigr)
  =2a\,M_j\delta+M_j^{2}\delta^{2} .
\]
Multiplying by $2$ and summing over $j$ gives the stated bound.
\end{proof}

\begin{remark}[Precision demanded of the ordinates]\label{rem:zeroprec}
Lemma~\ref{lem:zeros} is what fixes the working precision, by the
following sizing.  At a near-minimizer the transform is suppressed at
the low ordinates down to the level of the eigenvalue itself,
$|\fhat(\gamma_j)|\approx\sqrt{\lmin}$, while the derivatives $M_j$ are
of order one on the range that matters; the two terms of the bound are
then of sizes $\delta\sqrt{\lmin}$ and $\delta^{2}$.  Requiring both to
be small against $\lmin$ gives $\delta\lesssim\sqrt{\lmin}$: to resolve
a value of size $\lmin$ the ordinates must be known to accuracy about
$\sqrt{\lmin}$.  This is a sizing estimate rather than a bound with
explicit constants, since $|\fhat(\gamma_j)|\approx\sqrt{\lmin}$ is an
observed feature of the computed minimizers and not something proved
here; it is used only to choose $\delta$, and like the rest of this
subsection it does not enter the unconditional claims
(Remark~\ref{rem:zerostatus}).
\end{remark}

Accordingly, all $1000$ zeros were computed to $115$ significant
digits with \texttt{mpmath} ($\delta^2\approx10^{-220}$, sufficient
for $\lmin\ge10^{-192}$, i.e.\ for $L\le1.8$), and recomputed to $155$
digits for the $L=2.0$ run ($\delta^{2}\approx10^{-300}$); working
precisions were $60$--$330$ digits depending on $L$
(Table~\ref{tab:main}).

\begin{remark}[Status of the zero data]\label{rem:zerostatus}
The zeros-side assembly of this subsection is used only to
\emph{propose} trial vectors and to cross-check the geometric
certificates of Section~\ref{sec:geocert}.  The zero values themselves
are high-confidence multiprecision computations
(\texttt{mpmath.zetazero}), not formally verified enclosures; a fully
rigorous zeros-side certificate would additionally verify each
enclosure (interval Newton or the argument principle) and certify
completeness below $\Gamma_0$ (Turing's method).  Neither step is out
of reach: the verification of Platt and Trudgian
\cite{PlattTrudgian2021} places every zero of height at most
$3\cdot10^{12}$ on the critical line and, by Turing's method, fixes
the zero count below any such height, so both the reality of our
$1000$ zeros and the completeness of the list below
$\Gamma_0=1419.8$ are facts in the literature rather than hypotheses;
what remains is only to propagate that verification into certified
enclosures at our working precision, and the appeal to RH proper is
confined to the tail $\gamma>\Gamma_0$.  None of this enters
the unconditional claims of Table~\ref{tab:main}.
\end{remark}

\begin{definition}[Tail model]\label{lem:tail}
For $r>\Gamma_0>\alpha_{\max}$ expand
$\fhat(r)=-2\cos(rL)\sum_{j\ge0}M_j\,r^{-2j-2}$ with moments
$M_j=\sum_k a_k\alpha_k^{2j+1}$.  The assembly replaces the tail
$\sum_{\gamma>\Gamma_0}2|\fhat(\gamma)|^{2}$ by the model value
\[
  T_{\rm model}
  \;=\;
  \Bigl(\sum_{j,j'\le J}M_jM_{j'}\,C_{j+j'}\Bigr)(1+\varepsilon_J),
  \qquad
  C_m=\frac{4}{\pi}\,
  \frac{\Gamma_0^{-(2m+3)}}{2m+3}
  \Bigl(\ln\frac{\Gamma_0}{2\pi}+\frac{1}{2m+3}\Bigr),
\]
obtained by bounding $\cos^2\le1$, replacing the zeros beyond
$\Gamma_0$ by the smooth density $\nu(t)\,dt$, and truncating the
moment expansion at depth $J$ (with geometric remainder
$\varepsilon_J\le(\alpha_{\max}/\Gamma_0)^{2J+2}/(1-(\alpha_{\max}/\Gamma_0)^2)$).
\end{definition}

Definition~\ref{lem:tail} is a model, not a lemma: replacing the
zero-counting measure $dN$ by the smooth density $\nu\,dt$ is not an
inequality, because $N(t)$ fluctuates around its smooth part.  The
model is used only to assemble the matrix whose bottom eigenvector
serves as a trial vector, and no certified claim depends on it.  In the
certification step (Section~\ref{sec:ivclosure}) the tail is bounded
by a genuine inequality that carries the Backlund bound on
$N(t)-\bar N(t)$ explicitly.  The choice of the depth $J$ is delicate,
not for the certificate, which is valid for any $J$, but for the
quality of the trial vector; see Remark~\ref{rem:caught}.  We use $J$
from $12$ at small $L$ up to $80$ at $L=2.0$, validated by doubling
$J$ and checking stability of the certified value.

\begin{remark}[Proposal versus certificate]\label{rem:cert}
The minimal eigenvalue of $A/L$ is a proposal, contaminated by the tail
model and by solver rounding.  The certified upper bound for $\lmin(L)$
is produced only by the interval re-evaluation of its eigenvector in
Section~\ref{sec:ivclosure}, which uses exact transforms below
$\Gamma_0$ and the rigorous tail inequality above it.  By
Remark~\ref{rem:variational} that re-evaluation is valid for any
coefficient vector whatsoever, so no property of the proposal needs to
be trusted.
\end{remark}

\subsection{Interval-arithmetic closure}\label{sec:ivclosure}

The assembly and the eigensolve run in plain (non-interval)
multiprecision; their rounding errors are dozens of guard digits below
the target scale, but ``far below'' is not ``certified''.  We
therefore close the loop as follows.  The eigenvector $a$ returned by
the solver is treated as a mere proposal, since by
Remark~\ref{rem:variational} any coefficient vector whatsoever yields a
valid upper bound, and the Rayleigh quotient
$R(a)=Q(f_a)/(L\|a\|^2)$ is re-evaluated from scratch in interval
arithmetic (\texttt{mpmath.iv}):

\begin{enumerate}
\item each zero enters as an enclosing interval
  $[\gamma_j-\delta_j,\gamma_j+\delta_j]$ covering both the accuracy
  of the stored digits and the parsing round-off;
\item the finite sum over the first $1000$ zeros uses only the
  closed-form transforms \eqref{eq:shat}, so every operation is an
  interval operation on exact inputs;
\item the tail $\sum_{\gamma>\Gamma_0}2|\fhat(\gamma)|^2$ is bounded
  through the signed series $S(t)=\sum_{j\ge0}M_j t^{-2j-2}$, so that
  $2\fhat(t)^{2}\le8S(t)^{2}$ with the trigonometric factor bounded by
  one.  Writing $B(t)=1+\ln t$, which dominates
  $|N(t)-\frac{t}{2\pi}\ln\frac{t}{2\pi e}-\frac78|$ on the range used
  (Backlund; see \cite[\S9.4]{Titchmarsh1986}), integration by parts
  against $dN$ gives
  \[
    \sum_{\gamma>\Gamma_0}8S(\gamma)^{2}
    \;\le\;8\!\int_{\Gamma_0}^{\infty}\!\!S^{2}\nu\,dt
    \;+\;8S(\Gamma_0)^{2}B(\Gamma_0)
    \;+\;16\Bigl(\int_{\Gamma_0}^{\infty}\!\!S^{2}B\Bigr)^{\!1/2}
    \Bigl(\int_{\Gamma_0}^{\infty}\!\!S'^{2}B\Bigr)^{\!1/2},
  \]
  the last factor by Cauchy--Schwarz applied to $\int16|SS'|B$.  Each
  integral is a finite signed double sum over the moments with
  closed-form kernels.  Keeping the signs matters: the cross-$j$
  cancellation in these sums runs dozens of orders of magnitude deep,
  so any bound routed through $\sum_j|M_j|$ would be useless.  The
  truncation of the series at $J$ carries a geometric remainder, sized
  (adaptively in $J$) below $10^{-12}$ of the target value.
\end{enumerate}

The upper endpoint of the resulting interval is a rigorous variational
upper bound for $\lmin(L)$, conditional on RH and on the correctness of
the zero values.  We retain these zeros-side enclosures as
cross-checks; the unconditional certificates of Table~\ref{tab:main}
are the geometric-side enclosures of the next subsection.

\begin{remark}[The closure caught a real error]\label{rem:caught}
The interval re-evaluation is not a formality; at $L\ge1.8$ it
corrected the record in both directions.  With the moment tail of
Definition~\ref{lem:tail} truncated at $J\approx10$--$14$, which is
ample for $L\le1.6$, the $L=2.0$ eigenvector drifted into directions
whose tail penalty the truncated model underestimates, the relative
slack being of order $z^{J+1}$ with
$z=(\alpha_{\max}/\Gamma_0)^{2}\approx0.37$ at $L=2.0$.  The certified
re-evaluation exposed the resulting proposal $8.2\times10^{-281}$ as
unattainable: the true value of that Rayleigh quotient is
$2.5\times10^{-276}$, four orders of magnitude larger.  Conversely, at
$L=1.8$ the shallow model hid genuinely better directions, and
deepening to $J=40$ improved the certified bound from
$5.2\times10^{-184}$ to $9.0\times10^{-186}$; at $L=2.0$, deepening to
$J=80$ settled it at $4.2\times10^{-283}$, and doubling $J$ again
reproduces every displayed digit.  The mechanism behind all of this is
that the sought eigenvalue lies hundreds of orders of magnitude below
the matrix norm, so a model error even at relative level $10^{-11}$
completely rewires the bottom of the spectrum.  This is the
multiprecision analogue of the float64 pathologies of
Section~\ref{sec:failures}: an eigensolver will exploit any
unpenalized direction of its model, and only a bound re-derived
independently of the model deserves trust.
\end{remark}

\begin{remark}[Scope of the certification]\label{rem:scope}
What is certified is precisely this: each displayed number is an upper
bound for the infinite-dimensional infimum $\lmin(L)$.  What is not
certified is the reverse inequality, the proximity of the bound to
$\lmin(L)$.  The evidence for proximity is structural
rather than rigorous: the sine system is a complete orthogonal basis
of $L^2(-L,L)$, so the Galerkin values decrease monotonically to
$\lmin(L)$ as $K\to\infty$, and the observed stabilization under
dimension refinement (Table~\ref{tab:checks}) indicates that the limit
has been reached at the displayed precision; moreover, at $L=0.8$ the
unconditional lower bound of Theorem~\ref{thm:L08} confirms proximity
within a factor $2.6$ (Section~\ref{sec:synthesis}).  The empirical
law of Section~\ref{sec:results} is thus established for the certified
bounds; its identification with the asymptotics of $\lmin$ itself is
part of Conjecture~\ref{conj:law}.
\end{remark}

\subsection{Geometric-side certification}\label{sec:geocert}

The unconditional certificate evaluates the Rayleigh quotient of the
same trial vector $f_a=\sum_k a_k s_k$ entirely on the geometric side
of the explicit formula.  Writing $g=f_a\star\tilde f_a$ and
$N_2=\|f_a\|_2^2=L\sum a_k^2$,
\begin{equation}\label{eq:Qgeo}
\begin{aligned}
Q(f_a)
&=2\Bigl(\int_{-L}^{L}f_a\cosh\tfrac x2\,dx\Bigr)^{\!2}
-\bigl(\gamma_E+\log\pi+\log(1-e^{-4L})\bigr)N_2\\
&\quad+\int_0^{2L}\frac{2\bigl(e^{-2x}N_2-e^{-x/2}g(x)\bigr)}{1-e^{-2x}}\,dx
-\sum_{\log n<2L}\frac{2\Lambda(n)}{\sqrt n}\,g(\log n).
\end{aligned}
\end{equation}
Every term is a finite sum or an integral over a compact interval; no
zeros and no RH enter.  The autocorrelation of the odd-$k$ cosine
modes admits a closed form, so the prime sum is exact (modulo
interval rounding).  The archimedean integral is split into a
closed-form piece and a remainder that is analytic at the origin;
the remainder is integrated by dyadic-panel Gauss--Legendre with
rigorous Bernstein-ellipse remainders.  The upper endpoint of the
resulting interval enclosure of $Q(f_a)/N_2$ is the unconditional
bound reported in Table~\ref{tab:main}.

At every $L\in[0.5,2.0]$ the geometric enclosure agrees with the
zeros-side proposal to relative accuracy far below the displayed
digits, and is typically a few percent sharper (the zeros-side
pipeline carries a positive tail majorant that the geometric
evaluation does not need).  Relative enclosure widths range from
$10^{-61}$ at $L=0.5$ to $10^{-37}$ at $L=2.0$; the cancellation among
the $O(1)$ blocks of \eqref{eq:Qgeo}, which at $L=2$ reaches some $280$
digits, is fully resolved by the working precision of
Section~\ref{sec:pipeline}.

\subsection{Consistency checks}

\begin{table}[t]
  \centering
  \small
  \begin{tabular}{lll}
    \toprule
    check & result & note\\
    \midrule
    float64 bound at $L=0.7$ &
    $5.0\times10^{-13}$ vs $5.5\times10^{-13}$ &
    independent method, $10\%$\\
    $L=1.2$ rerun, $115$-digit zeros &
    $110.49$ vs $110.49$ & zero-precision insensitivity\\
    parity at $L=1.4$ &
    odd $176.61$ vs even $167.04$ & minimizer is even $\fhat$\\
    $K\!\to\!K+32$ at $L=1.6$ &
    $276.38$ vs $276.45$ & $0.03\%$\\
    $K\!\to\!K+24$ at $L=0.9$ &
    $51.18$ vs $51.32$ & $0.3\%$\\
    interval closure, all $L$ &
    Table~\ref{tab:main} values & Remark~\ref{rem:caught}\\
    lower bound at $L=0.8$ &
    $8.9\times10^{-18}\le2.27\times10^{-17}$ & Thm.~\ref{thm:L08}\\
    \bottomrule
  \end{tabular}
  \caption{Internal consistency checks of the upper-bound pipeline.}
  \label{tab:checks}
\end{table}

The checks are collected in Table~\ref{tab:checks}.  The cost of the
entire computation is modest: the largest run ($L=2.0$: $549$ odd
modes, $330$-digit arithmetic, $1000$ zeros at $155$ digits,
eigenvectors and interval closure included) takes under two hours on
one desktop core with the \texttt{gmpy2} backend.

\section{Results: the Landau--Widom law}\label{sec:results}

\begin{table}[t]
  \centering
  \begin{tabular}{rrrrrrr}
    \toprule
    $L$ & $n$ & dps & $\lmin(L)\le$ & $-\ln\lmin$ & $N(\Tstar)$ &
    $\dfrac{-\ln\lmin\cdot\ln N}{N}$\\
    \midrule
    0.5 & 24 & 90 & $1.05\times10^{-6}$ & 13.76 & 1 & ---\\
    0.6 & 24 & 90 & $1.82\times10^{-9}$ & 20.13 & 1 & ---\\
    0.7 & 24 & 90 & $4.99\times10^{-13}$ & 28.33 & 3 & 10.37\\
    0.8 & 24 & 90 & $2.27\times10^{-17}$ & 38.32 & 4 & 13.28\\
    0.9 & 31 & 95 & $5.91\times10^{-23}$ & 51.18 & 6 & 15.28\\
    1.0 & 40 & 110 & $8.08\times10^{-30}$ & 66.99 & 8 & 17.42\\
    1.1 & 53 & 110 & $2.78\times10^{-38}$ & 86.48 & 12 & 17.90\\
    1.2 & 70 & 125 & $9.98\times10^{-49}$ & 110.53 & 16 & 19.16\\
    1.4 & 119 & 155 & $1.91\times10^{-77}$ & 176.65 & 30 & 20.02\\
    1.6 & 200 & 260 & $8.61\times10^{-121}$ & 276.46 & 55 & 20.15\\
    1.8 & 333 & 420 & $7.86\times10^{-186}$ & 426.22 & 96 & 20.27\\
    2.0 & 549 & 560 & $3.19\times10^{-283}$ & 650.47 & 165 & 20.13\\
    \bottomrule
  \end{tabular}
  \caption{Unconditional certified upper bounds for $\lmin(L)$, from
    geometric-side interval evaluation of $Q(f_L)/\|f_L\|_2^2$
    (Section~\ref{sec:geocert}).  Here $n$ is the number of odd sine
    modes retained, dps is the geometric working precision, $N(\Tstar)$
    is the exact zero count below $\Tstar=2\pi e^{2L}$, and the last
    column is the normalized per-zero cost, which plateaus at
    $20.1\approx2\pi^2=19.74$.  The constant dimension $n=24$ for
    $L\le0.8$ is the floor of $48$ in the rule of
    Section~\ref{sec:pipeline}.}
  \label{tab:main}
\end{table}

\begin{figure}[t]
  \centering
  \includegraphics[width=\textwidth]{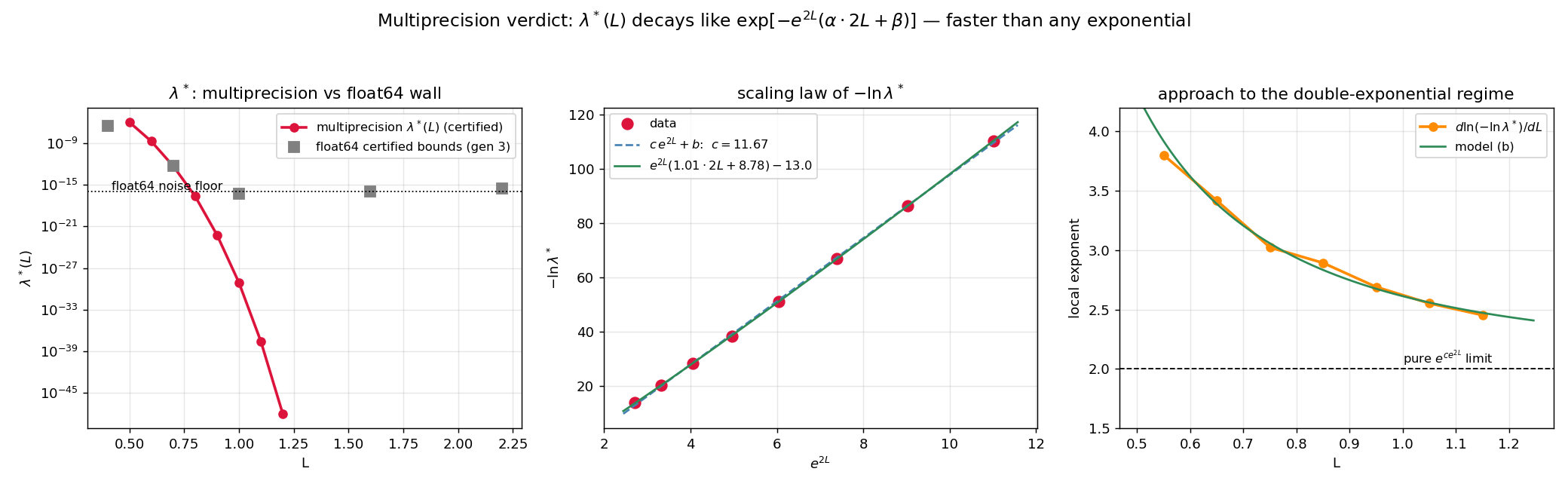}
  \caption{Multiprecision results for $L\le1.2$: the certified bounds
    (left) fall through the float64 wall; $-\ln\lmin$ against $e^{2L}$
    (centre); the local exponent $d\ln(-\ln\lmin)/dL$ decreasing
    towards its asymptote (right).}
  \label{fig:verdict}
\end{figure}

\begin{figure}[t]
  \centering
  \includegraphics[width=\textwidth]{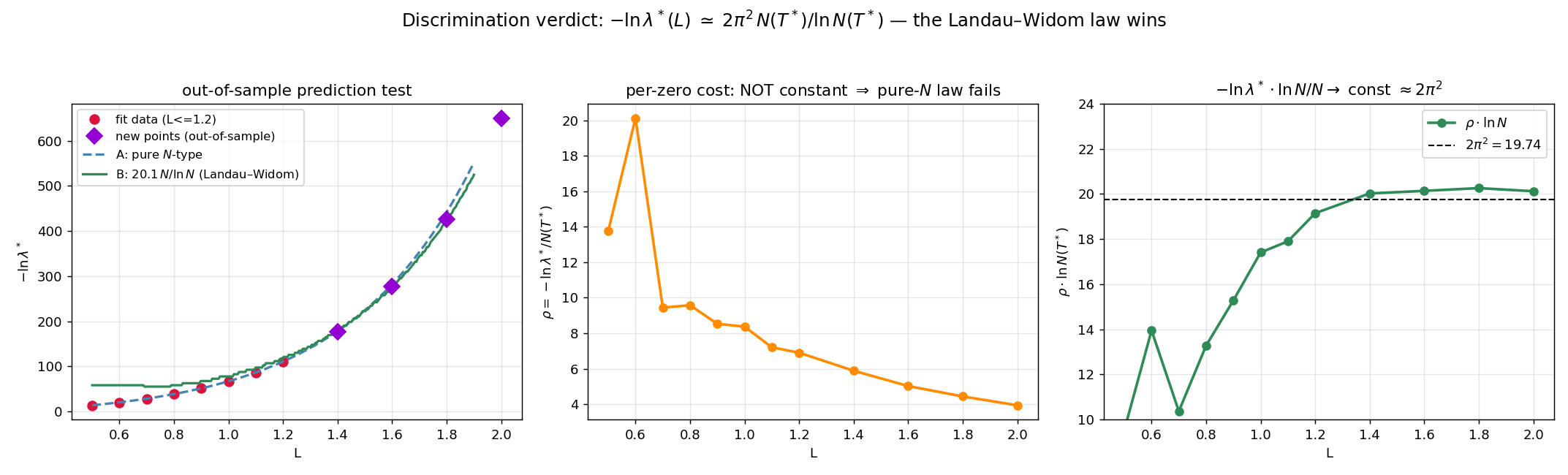}
  \caption{Model discrimination.  Left: out-of-sample prediction of
    the new points $L=1.4,1.6,1.8$ by the two laws calibrated on
    $L\le1.2$.  Centre: the per-zero cost $-\ln\lmin/N(\Tstar)$ is not
    constant, excluding the pure-$N$ law.  Right: multiplied by
    $\ln N(\Tstar)$ it plateaus at $20.1\approx2\pi^{2}$.}
  \label{fig:discrimination}
\end{figure}

\begin{figure}[t]
  \centering
  \includegraphics[width=\textwidth]{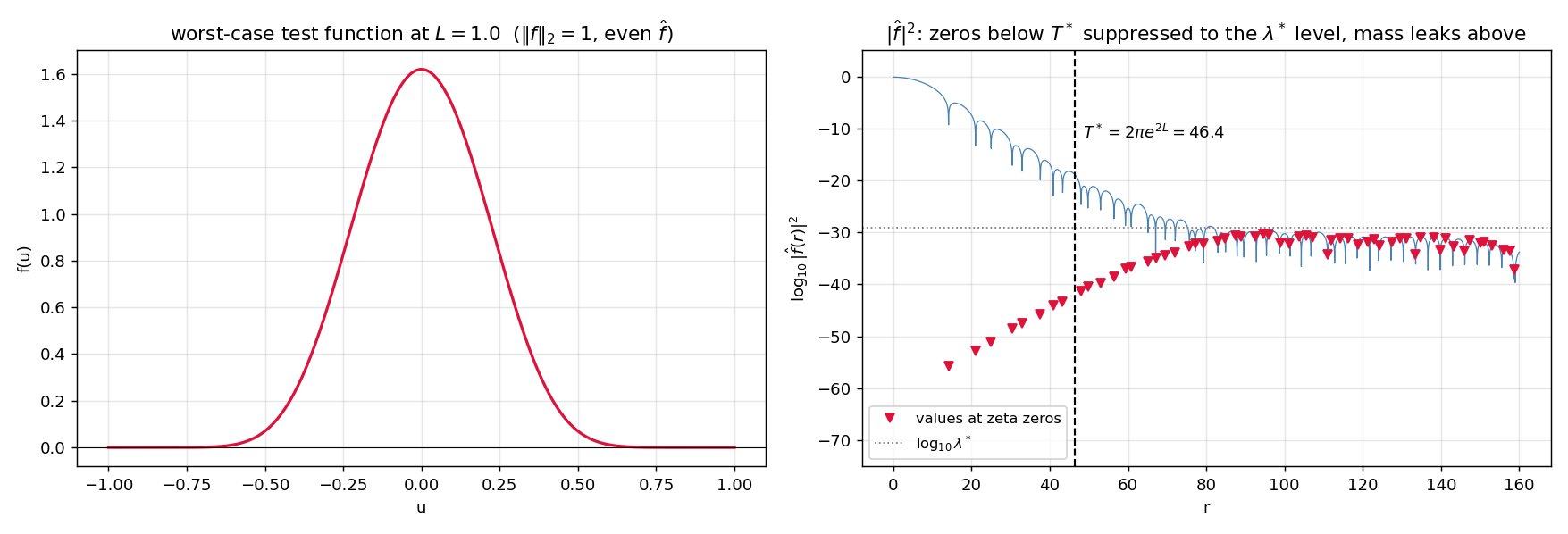}
  \caption{Portrait of the minimizer at $L=1.0$.  Left: the worst-case
    test function.  Right: $|\fhat|^2$ on a logarithmic scale; the
    values at zeta zeros below $\Tstar$ are suppressed to the level
    $\lmin$, and the cost is paid in the transition region around
    $\Tstar$.}
  \label{fig:portrait}
\end{figure}

Table~\ref{tab:main} lists the certified bounds;
Figures~\ref{fig:verdict}--\ref{fig:portrait} display them.  In this
section ``$\lmin$'' refers to these certified upper bounds; per
Remark~\ref{rem:scope}, every statement below is established for the
bounds and conjectural for the infimum itself.  Three observations.

\emph{(i) The law.}  The per-zero cost $-\ln\lmin/N(\Tstar)$ decreases
steadily (from $9.4$ at $L=0.7$ to $3.9$ at $L=2.0$), excluding a law
proportional to $N(\Tstar)$; multiplied by $\ln N(\Tstar)$ it
plateaus: $20.02,\,20.14,\,20.26,\,20.12$ at the last four points,
about $2\%$ above $2\pi^2=19.74$.  A single-constant fit to the four
asymptotic points gives
\[
  -\ln\lmin(L)\;=\;(20.13\pm0.10)\,\frac{N(\Tstar)}{\ln N(\Tstar)},
  \qquad \frac{20.13}{2\pi^{2}}=1.020,
\]
with in-sample residuals below $0.7\%$; the parameter-free value
$C=2\pi^{2}$ reproduces the same four points to within
$1.4\%$--$2.6\%$.  These are fit residuals, and must be distinguished
from the out-of-sample test of Section~\ref{sec:intro-upper}.  Because
the plateau constant is still drifting over the calibration range
($19.15$ at $L=1.2$ in the last column, against $20.02$--$20.26$
beyond), any single constant calibrated on $L\le1.2$ necessarily
underpredicts the new points, here by about $5\%$.  What discriminates
the two laws is that these residuals carry no trend, whereas those of
the pure-$N$ model grow monotonically.  We read the residual offset
from $2\pi^2$, between $1.4\%$ and $2.6\%$ on the plateau and with no
visible trend at these sizes, as an $O(\ln\ln N/\ln N)$ finite-size
correction of the Landau--Widom asymptotics.

\emph{(ii) The rejected model.}  The three-parameter model
$-\ln\lmin=e^{2L}(\alpha\cdot2L+\beta)+\delta_0$ fits the range
$L\le1.2$ with maximal residual $0.2$, and its leading coefficient
$\alpha=1.01$ seemed to identify a ``one nat per zero'' law.  The
out-of-sample test rejects it: its prediction error grows monotonically
($+0.8\%$, $+2.0\%$, $+3.7\%$, $+5.8\%$), whereas the one-parameter
Landau--Widom form, calibrated on the same range, errs by $-4.4\%$,
$-4.9\%$, $-5.5\%$, $-4.8\%$, larger in size but with no trend and with
two fewer parameters.  We record the comparison because the naive
reading goes the other way: a multi-parameter model fitting a short
range to high accuracy is weaker evidence than a one-parameter law
with an identifiable constant that predicts new data.

\emph{(iii) The minimizer.}  Figure~\ref{fig:portrait} shows the
minimizing test function at $L=1.0$: $|\fhat|^{2}$ dips to the working
level at every zero below $\Tstar\approx46$, and releases its mass
just above $\Tstar$, in quantitative agreement with the interpolation
picture of Section~\ref{sec:mechanism}.

\begin{conjecture}\label{conj:law}
As $L\to\infty$,
\[
  -\ln\lmin(L)\;=\;2\pi^{2}\,\frac{N(\Tstar)}{\ln N(\Tstar)}
  \,\bigl(1+o(1)\bigr),
  \qquad \Tstar=2\pi e^{2L},\quad
  N(\Tstar)=e^{2L}(2L-1)+O(L).
\]
\end{conjecture}

\begin{remark}[the constant is fitted]
\label{rem:rival}
The shape $N/\ln N$ is what the data above discriminate; the constant
$2\pi^{2}$ is read off the plateau and is not derived here.  Writing
$R_{1}=-\ln\lmin/\bigl(2\pi^{2}N/\ln N\bigr)$, the numbers of
Table~\ref{tab:main} give
\[
  R_{1}=1.015,\,1.020,\,1.027,\,1.020
  \qquad (L=1.4,1.6,1.8,2.0),
\]
flat to within $1.3\%$, which is the evidence for
Conjecture~\ref{conj:law} at these sizes.  But $R_{1}$ has overshot $1$
and is drifting slowly upwards, and the residual offset from $1$, which
observation (i) reads as an $O(\ln\ln N/\ln N)$ finite-size correction,
is also what a law with a slightly larger asymptotic constant, or a
different slowly varying factor, would produce.  Four points at $L\le2$
cannot separate these readings; we state Conjecture~\ref{conj:law} as
the reading of the present data.
\end{remark}

\section{A conditional double-exponential upper bound}
\label{sec:theorem}

We now prove Theorem~\ref{thm:main-intro}.  Its scope is limited: the
theorem captures only the qualitative content of the empirical law,
namely decay faster than exponential, at least double-exponential with
exponent $e^{L}$.  Neither the scale $\Tstar=2\pi e^{2L}$ nor the
constant $2\pi^{2}$ follows from it; both remain conjectural.
Throughout, RH is assumed, so that \eqref{eq:zerosside} holds, and we
write $T=2\pi e^{L}$ and $\nu(t)=\frac1{2\pi}\ln\frac{t}{2\pi}$, so
that $\nu(T)=\frac{L}{2\pi}$, half of the type budget $L/\pi$.  The
construction thus operates at half saturation, and that is what limits
the exponent to $e^{L}$.

\subsection{Standard facts}

We use two unconditional facts and one classical constant.  Write
\begin{equation}\label{eq:RvM}
  N(t)\;=\;\frac{t}{2\pi}\ln\frac{t}{2\pi e}\;+\;\frac78\;+\;R(t),
  \qquad |R(t)|\;\le\;C_0\ln(t+2)\quad(t\ge0),
\end{equation}
the Riemann--von Mangoldt formula with Backlund's bound on the
remainder \cite[\S9.3--9.4]{Titchmarsh1986}.  Note that
$\frac{d}{dt}\bigl(\frac{t}{2\pi}\ln\frac{t}{2\pi e}\bigr)=\nu(t)$.
Second, from $\sum_{\rho}\rho^{-1}(1-\rho)^{-1}=2+\gamma_E-\ln4\pi$
\cite[Ch.~12]{Davenport2000} and $\rho(1-\rho)=\gamma^2+\tfrac14$ on
RH,
\begin{equation}\label{eq:gammasum}
  \sum_{\gamma>0}\frac{1}{\gamma^{2}}
  \;\le\;\Bigl(1+\frac{1}{4\gamma_1^{2}}\Bigr)
  \sum_{\gamma>0}\frac{1}{\gamma^{2}+\frac14}
  \;=\;\Bigl(1+\frac{1}{4\gamma_1^{2}}\Bigr)
  \frac{2+\gamma_E-\ln4\pi}{2}\;<\;0.0232 .
\end{equation}
Third, since the $\le C_1L$ zeros lying in $[T,T+1]$ cut that interval
into at most $C_1L+1$ gaps, one of the gaps has length
$\ge1/(C_1L+1)$; replacing $T$ by the midpoint of that gap moves $T$
by at most $1$ (changing $N(T)$, $\nu(T)$ and $LT$ by $O(L)$, which is
absorbed in every error term below) and ensures
\begin{equation}\label{eq:gap}
  \mathrm{dist}\bigl(T,\{\gamma_j\}\bigr)\;\ge\;\frac{c_0}{L}
\end{equation}
for an absolute $c_0>0$.  We assume \eqref{eq:gap} from now on.

\subsection{Construction}

Set
\[
  \fhat_0(r)\;=\;B(r)\,h(r),\qquad
  B(r)=\!\!\prod_{0<\gamma_j\le T}\!\Bigl(1-\frac{r^2}{\gamma_j^2}\Bigr),
  \qquad
  h(r)=\Bigl(\frac{\sin(Lr/m)}{Lr/m}\Bigr)^{m},\quad
  m=\Bigl\lceil \frac{LT}{e}\Bigr\rceil .
\]
$B$ is an even polynomial of degree $2N(T)$ (exponential type $0$)
vanishing at every zero up to $T$; $h$ is even, entire of exponential
type exactly $m\cdot(L/m)=L$, with
$|h(r)|\le\min\{1,(m/L|r|)^{m}\}$ on $\R$.  By \eqref{eq:RvM},
$N(T)=e^{L}(L-1)+O(L)$, while $m\ge LT/e=(2\pi/e)Le^{L}>2.31\,Le^{L}$,
so $m>2N(T)+2$ for all large $L$; hence
$\fhat_0(r)=O(|r|^{2N(T)-m})=O(|r|^{-2})$ and
$\fhat_0\in L^{1}\cap L^{2}(\R)$.  By the Paley--Wiener theorem,
$f_0=\mathcal F^{-1}\fhat_0$ is real, even, continuous and supported
in $[-L,L]$; the polynomial decay of $\fhat_0$ places $f_0$ in the
admissible class for the explicit formula (see
\cite[\S2]{Bombieri2000}), so that, under RH,
\[
  Q(f_0)\;=\;\sum_{\gamma>0}2\,\fhat_0(\gamma)^{2}
  \;=\;\sum_{\gamma>T}2\,\fhat_0(\gamma)^{2},
\]
the zeros up to $T$ being annihilated by $B$.

\subsection{The potential bound for the product}

\begin{lemma}[Potential bound]\label{lem:potential}
Let $x\ge1+c_0/(LT)$.  Then
\[
  \ln B(xT)\;\le\;\nu(T)\,T\,\bar g(x)\;+\;C\,L\,(L+\ln 2x),
  \qquad
  \bar g(x)=\int_{0}^{1}\Bigl(\ln\frac{x^{2}-u^{2}}{u^{2}}\Bigr)_{\!+}du,
\]
with an absolute constant $C$.  The profile $\bar g$ has the closed
forms
\[
  \bar g(x)=x\,\ln(3+2\sqrt2)\quad(1\le x\le\sqrt2),
  \qquad
  \bar g(x)=(x{+}1)\ln(x{+}1)-(x{-}1)\ln(x{-}1)\quad(x\ge\sqrt2),
\]
and satisfies $\bar g(x)\le 2+2\ln x$ for $x\ge\sqrt2$.
\end{lemma}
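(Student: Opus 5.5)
We bound $\ln|B(xT)|$; this dominates $\ln B(xT)$ whatever its sign. Write it as a Stieltjes integral against the zero-counting function:
\[
\ln|B(xT)|=\sum_{0<\gamma_j\le T}\ln\Bigl|\frac{x^2T^2}{\gamma_j^2}-1\Bigr|
=\int_{\gamma_1^-}^{T}\phi(t)\,dN(t),\qquad
\phi(t)=\ln\frac{x^2T^2-t^2}{t^2}.
\]
Since $x>1$, $\phi$ is smooth on $[\gamma_1/2,T]$. Split $N=\bar N+\frac78+R$ as in \eqref{eq:RvM}, with $\bar N'=\nu$. The constant $\frac78$ carries no measure, so there are two pieces, the smooth part $\int\phi\,\nu\,dt$ and the fluctuation $\int\phi\,dR$.

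\emph{Smooth part.} On $[\gamma_1,T]$ we have $0<\nu(t)\le\nu(T)$, because $\gamma_1>2\pi$ and $\nu$ is increasing. Hence $\int_{\gamma_1}^T\phi\,\nu\,dt\le\nu(T)\int_0^T\phi_+\,dt$. Dropping the negative part of $\phi$ is exactly what lets us use the monotone bound on $\nu$. The substitution $t=uT$ gives $\nu(T)\,T\,\bar g(x)$.

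\emph{Fluctuation part.} Integrate by parts:
\[
\int\phi\,dR=R(T)\phi(T)-R(\gamma_1^-)\phi(\gamma_1)-\int_{\gamma_1}^T R\,\phi'\,dt .
\]
We use $|R|\le C_0\ln(T+2)=O(L)$ throughout.
\begin{itemize}
\item $\phi(T)=\ln(x^2-1)$. This is the one delicate place. The hypothesis $x\ge1+c_0/(LT)$ gives $\ln(x^2-1)\ge\ln(2c_0/(LT))=-O(L)$, and also $\ln(x^2-1)\le2\ln 2x$, so $|R(T)\phi(T)|\le CL(L+\ln2x)$.
\item $|\phi(\gamma_1)|\le\ln(x^2T^2/\gamma_1^2)=O(L+\ln x)$.
\item $\phi'(t)=-2x^2T^2/\bigl(t(x^2T^2-t^2)\bigr)$ has constant sign, so $\int|\phi'|$ equals $|\phi(\gamma_1)-\phi(T)|=O(L+\ln 2x)$, again using the lower bound on $x^2-1$.
\end{itemize}
Altogether the fluctuation part is $\le CL(L+\ln2x)$, which is the stated error term. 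I expect this endpoint analysis near $x=1$, the logarithmic singularity of $\phi(T)$, to be the main obstacle. It is precisely why the hypothesis $x\ge1+c_0/(LT)$, inherited from the gap condition \eqref{eq:gap}, appears in the lemma.

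\emph{Closed forms.} The integrand $\ln\bigl((x^2-u^2)/u^2\bigr)$ is positive iff $u<x/\sqrt2$.
\begin{itemize}
\item For $x\ge\sqrt2$ the positive part is the whole of $[0,1]$. Summing $\int_0^1\ln(x+u)\,du=(x+1)\ln(x+1)-x\ln x-1$, $\int_0^1\ln(x-u)\,du=x\ln x-(x-1)\ln(x-1)-1$ and $-2\int_0^1\ln u\,du=2$ gives the stated expression.
\item For $1\le x\le\sqrt2$ we integrate over $[0,x/\sqrt2]$ and substitute $u=xv$. This gives $x\int_0^{1/\sqrt2}\ln\bigl((1-v^2)/v^2\bigr)\,dv$, a constant which an antiderivative computation identifies as $\ln(3+2\sqrt2)$.
\end{itemize}
For the inequality $\bar g\le2+2\ln x$ on $x\ge\sqrt2$, set $D(x)=2+2\ln x-\bar g(x)$. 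Then $D'(x)=2/x-\ln\frac{x+1}{x-1}<0$, so $D$ decreases, while $D(x)\to0^+$ as $x\to\infty$ by the expansion $\bar g(x)=2+2\ln x-O(x^{-2})$. Hence $D>0$ on $[\sqrt2,\infty)$.
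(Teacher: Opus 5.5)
Your proof is correct and is essentially the paper's argument. It uses the same Stieltjes representation against $dN$, the same split into the smooth density $\nu$ (with the negative part of $\varphi$ discarded so that $\nu\le\nu(T)$ applies) plus the Riemann--von Mangoldt remainder, the same integration by parts with $x\ge1+c_0/(LT)$ controlling $\varphi(T)=\ln(x^2-1)$, and the same closed-form and derivative computations for $\bar g$. The differences are cosmetic: starting the integral at $\gamma_1^-$ avoids the paper's handling of $t<2\pi$, where $\nu<0$ and $\varphi$ is singular, at the cost of a harmless boundary term. Your reading of the bound as one for $\ln|B(xT)|$ is the right one, since $B(xT)$ has sign $(-1)^{N(T)}$.
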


\begin{proof}
Write $r=xT$ and $\varphi(t)=\ln\frac{r^{2}-t^{2}}{t^{2}}$ for
$0<t\le T<r$, so that $\ln B(r)=\int_{0}^{T}\varphi(t)\,dN(t)$ as a
Stieltjes integral.  Let
$\widetilde R(t)=N(t)-\frac{t}{2\pi}\ln\frac{t}{2\pi e}$; by
\eqref{eq:RvM}, $|\widetilde R(t)|\le C_0\ln(t+2)+1$, and
$\widetilde R(t)=-\frac{t}{2\pi}\ln\frac{t}{2\pi e}\to0$ as $t\to0^+$.
Then
\begin{equation}\label{eq:split}
  \ln B(r)\;=\;\int_{0}^{T}\varphi(t)\,\nu(t)\,dt
  \;+\;\int_{0}^{T}\varphi(t)\,d\widetilde R(t).
\end{equation}

\emph{Smooth part.}  We claim
$\varphi(t)\nu(t)\le\varphi(t)_{+}\,\nu(T)$ pointwise on $(0,T]$.
Indeed, $\nu$ is increasing with $\nu(t)\le\nu(T)$; if
$\varphi(t)\ge0$ the claim follows at once (also when $\nu(t)<0$,
i.e.\ $t<2\pi$); and $\varphi(t)<0$ forces $t>r/\sqrt2\ge T/\sqrt2
>2\pi$, where $\nu(t)>0$, making the left side negative.  Hence,
substituting $t=uT$ and using $r=xT$,
\[
  \int_{0}^{T}\varphi\,\nu\,dt
  \;\le\;\nu(T)\int_{0}^{T}\varphi(t)_{+}\,dt
  \;=\;\nu(T)\,T\int_{0}^{1}
  \Bigl(\ln\frac{x^{2}-u^{2}}{u^{2}}\Bigr)_{\!+}du
  \;=\;\nu(T)\,T\,\bar g(x).
\]

\emph{Remainder.}  Integrating by parts (the boundary term at $0$
vanishes since $\widetilde R(t)=O(t\ln\frac1t)$ while
$\varphi(t)=O(\ln\frac1t)$ there),
\[
  \int_{0}^{T}\varphi\,d\widetilde R
  \;=\;\varphi(T)\,\widetilde R(T)
  \;-\;\int_{0}^{T}\varphi'(t)\,\widetilde R(t)\,dt .
\]
For the boundary term, $\varphi(T)=\ln(x^{2}-1)$; by the hypothesis
$x-1\ge c_0/(LT)$ we get
$|\ln(x^2-1)|\le\ln(LT/c_0)+\ln2x+C=O(L+\ln 2x)$, and
$|\widetilde R(T)|=O(\ln T)=O(L)$.  For the integral,
$\varphi'(t)=-\frac{2t}{r^{2}-t^{2}}-\frac{2}{t}$ has constant sign,
and
\[
  \int_{0}^{T}\frac{2t}{r^{2}-t^{2}}\,dt
  =\ln\frac{x^{2}}{x^{2}-1}
  = O(L+\ln 2x),
  \qquad
  \int_{\gamma_1}^{T}\frac{2}{t}\,dt=O(L),
\]
while on $(0,\gamma_1)$ one has
$\frac2t|\widetilde R(t)|\le\frac1\pi(\ln\frac{2\pi}{t}+1)$, which is
integrable with $O(1)$ integral.  Multiplying by
$\sup_{t\le T}|\widetilde R(t)|=O(L)$ where appropriate, the remainder
is $O\bigl(L(L+\ln 2x)\bigr)$, proving the bound.

\emph{Closed forms.}  The integrand of $\bar g$ is positive exactly
for $u<x/\sqrt2$.  If $x\ge\sqrt2$ this is all of $[0,1]$ and a direct
computation gives
$\int_0^1\ln(x^2-u^2)\,du=(x{+}1)\ln(x{+}1)-(x{-}1)\ln(x{-}1)-2$ and
$-2\int_0^1\ln u\,du=2$.  If $1\le x\le\sqrt2$, the substitution
$u=xv$ gives
$\bar g(x)=x\int_{0}^{1/\sqrt2}\bigl(\ln(1-v^{2})-2\ln v\bigr)dv
=x\,\bar g(1)$, and evaluating the antiderivative
$v\ln(1-v^{2})+\ln\frac{1+v}{1-v}-2v\ln v$ at $v=1/\sqrt2$ (the two
$\ln2$ terms cancel) yields
$\bar g(1)=\ln\frac{\sqrt2+1}{\sqrt2-1}=\ln(3+2\sqrt2)$; the two
closed forms agree at $x=\sqrt2$ because
$\ln(\sqrt2+1)=-\ln(\sqrt2-1)=\tfrac12\ln(3+2\sqrt2)$.  Finally,
$\psi(x):=2+2\ln x-\bar g(x)$ satisfies
$\psi'(x)=\frac2x-\ln\frac{x+1}{x-1}=\frac2x-2\,\mathrm{artanh}\frac1x
<0$ and $\psi(x)\to0$ as $x\to\infty$, so $\psi>0$ on
$[\sqrt2,\infty)$.
\end{proof}

\subsection{Mass of the candidate}

\begin{lemma}[Mass]\label{lem:mass}
For all large $L$,\; $\int_{0}^{1}\fhat_0(r)^{2}\,dr\ge0.9$, and hence
$\|f_0\|_{2}^{2}=\frac1{2\pi}\int_{\R}\fhat_0^{2}\ge\frac{0.9}{\pi}
\ge\frac14$.
\end{lemma}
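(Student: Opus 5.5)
We show that on $[0,1]$ both factors of $\fhat_0=B\,h$ are close to $1$, so $\fhat_0(r)^2\ge 0.9$ pointwise there.

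\emph{The window $h$.} For $|r|\le1$ put $y=Lr/m$. Then $|y|\le L/m\le e/T=e^{1-L}/(2\pi)\to0$. From $\frac{\sin y}{y}\ge 1-\frac{y^2}{6}$ and Bernoulli's inequality,
\[
h(r)\;\ge\;\Bigl(1-\frac{y^{2}}{6}\Bigr)^{m}\;\ge\;1-\frac{m y^{2}}{6}\;\ge\;1-\frac{L^{2}}{6m}.
\]
Since $m\ge LT/e$, the correction $L^2/(6m)=O(e^{-L})$ tends to $0$. So $h(r)\ge 1-o(1)$ uniformly on $[0,1]$.

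\emph{The product $B$.} For $0\le r\le1$ and every ordinate, $r^2/\gamma_j^2\le 1/\gamma_1^2<1/196$, since $\gamma_1\approx14.13$. Each factor therefore lies in $(0,1]$, and $\ln(1-u)\ge -u/(1-u)$ for $0\le u<1$ gives
\[
\ln B(r)\;\ge\;-\frac{1}{1-\gamma_1^{-2}}\sum_{0<\gamma_j\le T}\frac{r^{2}}{\gamma_j^{2}}\;\ge\;-\frac{1}{1-\gamma_1^{-2}}\sum_{\gamma>0}\frac1{\gamma^2}\;>\;-0.0234 .
\]
Here we invoke the bound \eqref{eq:gammasum}, which is uniform in $T$ and hence in $L$. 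Thus $B(r)\ge e^{-0.0234}>0.976$ on $[0,1]$.

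\emph{Conclusion.} Both factors are positive on $[0,1]$, so
\[
\fhat_0(r)^2\;\ge\;(0.976)^2\,(1-o(1))^2\;\ge\;0.95\,(1-o(1))\;\ge\;0.9
\]
for all large $L$. Integrating over $[0,1]$ gives $\int_0^1\fhat_0^2\ge0.9$.

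For the norm, $\fhat_0$ is even and real, so Parseval gives
\[
\|f_0\|_2^2\;=\;\frac1{2\pi}\int_\R\fhat_0^2\;\ge\;\frac{1}{2\pi}\cdot 2\int_0^1\fhat_0^2\;\ge\;\frac{0.9}{\pi}\;>\;\frac14 .
\]

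The only point needing care is that the lower bound on $B$ must be uniform in $L$, even though $B$ has about $e^{L}L$ factors. The absolutely convergent sum \eqref{eq:gammasum} over all ordinates supplies exactly this, since it does not depend on the cutoff $T$. The gap normalisation \eqref{eq:gap} plays no role here, because $[0,1]$ lies far below every ordinate.
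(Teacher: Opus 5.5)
Your proof is correct and follows the paper's argument: the same lower bound on $\ln B$ via $\ln(1-y)\ge -y/(1-y)$ together with the $T$-uniform sum \eqref{eq:gammasum}, and the observation that $h\to1$ uniformly on $[0,1]$ because $L/m\to0$. The only cosmetic difference is the window bound: you use Bernoulli's inequality to get $h\ge 1-L^2/(6m)$, while the paper uses $1-z^2/6\ge e^{-z^2/4}$ to get $h\ge e^{-L^2/(4m)}$; either suffices.
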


\begin{proof}
Let $0\le r\le1$.  Since $\gamma_1>14$, each factor of $B$ satisfies
$0<1-r^{2}/\gamma_j^{2}<1$, and by $\ln(1-y)\ge-\frac{y}{1-y}$ for
$0\le y<1$, together with \eqref{eq:gammasum},
\[
  \ln B(r)\;\ge\;-\frac{r^{2}}{1-\gamma_1^{-2}}
  \sum_{\gamma>0}\frac{1}{\gamma^{2}}
  \;\ge\;-\,0.0234,
\]
so $B(r)^{2}\ge e^{-0.047}\ge0.953$.  For the window, put
$z=Lr/m\le L/m\to0$; from
$\frac{\sin z}{z}\ge1-\frac{z^{2}}{6}\ge e^{-z^{2}/4}$ (valid for
$0\le z\le1.6$) we get $h(r)\ge e^{-mz^{2}/4}\ge e^{-L^{2}/(4m)}\to1$.
Hence $\int_0^1\fhat_0^2\ge0.953\,(1-o(1))\ge0.9$ for large $L$, and
the $L^{2}$ statement follows from Plancherel and evenness.
\end{proof}

\subsection{Pointwise decay and the profile \texorpdfstring{$F$}{F}}

\begin{lemma}[Pointwise tail bound]\label{lem:pointwise}
For every zero $\gamma=xT>T$,
\[
  \ln|\fhat_0(\gamma)|\;\le\;LT\,F(x)+C\,L\,(L+\ln 2x),
  \qquad
  F(x)=\frac{\bar g(x)}{2\pi}-\frac{1}{e}\,\ln(ex).
\]
\end{lemma}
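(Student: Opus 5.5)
The bound will follow by estimating the two factors of $\fhat_0(\gamma)=B(\gamma)h(\gamma)$ separately and adding logarithms. For the product we apply Lemma~\ref{lem:potential} at $x=\gamma/T$. Its hypothesis $x\ge1+c_0/(LT)$ holds because $\gamma>T$ is an ordinate and \eqref{eq:gap} gives $\gamma-T\ge c_0/L$, hence $x-1\ge c_0/(LT)$.

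There is one subtlety. Lemma~\ref{lem:potential} bounds $\ln B$, whereas we need $\ln|B|$. For $r>T$ every factor $1-r^2/\gamma_j^2$ is negative, so $|B(r)|=\prod(r^2/\gamma_j^2-1)$. This is exactly the Stieltjes integral $\int_0^T\varphi\,dN$ used in that proof, so the lemma in fact bounds $\ln|B(xT)|$. Substituting $\nu(T)=L/(2\pi)$ then gives
\[
\ln|B(\gamma)|\le LT\,\frac{\bar g(x)}{2\pi}+CL(L+\ln2x).
\]

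For the window we use $|h(r)|\le(m/(L|r|))^m$, so
\[
\ln|h(\gamma)|\le -m\ln\frac{L\gamma}{m}=-m\ln\frac{LxT}{m}.
\]
With $m=\lceil LT/e\rceil$, write $m=LT/e+\theta$ with $0\le\theta<1$. Then $LxT/m=ex\cdot(1+e\theta/(LT))^{-1}$, and therefore
\[
-m\ln\frac{LxT}{m}=-\frac{LT}{e}\ln(ex)-\theta\ln(ex)+m\ln\Bigl(1+\frac{e\theta}{LT}\Bigr).
\]
The last term is at most $m\cdot e/(LT)=O(1)$, and $-\theta\ln(ex)\le0$ since $x>1$. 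Hence
\[
\ln|h(\gamma)|\le-\frac{LT}{e}\ln(ex)+O(1).
\]
The sign bookkeeping for the rounding error in $m$ is the only point needing care. Once the lower-order terms are shown to be nonpositive or $O(1)$, they are absorbed into $CL(L+\ln2x)$.

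Adding the two estimates gives
\[
\ln|\fhat_0(\gamma)|\le LT\Bigl(\frac{\bar g(x)}{2\pi}-\frac1e\ln(ex)\Bigr)+CL(L+\ln2x),
\]
with a possibly enlarged absolute constant $C$. This is the claim with $F$ as stated.

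The main obstacle is making sure the potential bound applies uniformly in $x$, including $x$ very close to $1$, where $\ln(x^2-1)$ is large. That case is handled precisely by the gap normalization \eqref{eq:gap}, which keeps the boundary term $O(L+\ln2x)$.
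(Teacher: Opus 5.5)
Your proof is correct and follows the paper's argument. You apply Lemma~\ref{lem:potential} to the product, with its hypothesis secured by \eqref{eq:gap}; you bound the window by $|h(r)|\le(m/L|r|)^m$, where the rounding in $m=\lceil LT/e\rceil$ costs only $O(1)$; and you add logarithms. Your observation that the Stieltjes integral $\int_0^T\varphi\,dN$ in that lemma is really $\ln|B(xT)|$, because every factor is negative for $r>T$, is a point the paper passes over silently, and it is exactly what is needed to reach $\ln|\fhat_0(\gamma)|$.
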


\begin{proof}
By \eqref{eq:gap}, $x-1=(\gamma-T)/T\ge c_0/(LT)$, so
Lemma~\ref{lem:potential} applies and gives, with $\nu(T)T=LT/2\pi$,
$\ln B(xT)\le\frac{LT}{2\pi}\bar g(x)+O(L(L+\ln2x))$.  For the window,
$LxT/m\ge ex/(1+e/LT)>1$, so $|h(xT)|\le(m/LxT)^{m}$ and
\[
  \ln|h(xT)|\;\le\;-m\ln\frac{LxT}{m}
  \;\le\;-m\Bigl(\ln(ex)-\frac{e}{LT}\Bigr)
  \;\le\;-\frac{LT}{e}\,\ln(ex)+2,
\]
using $LT/e\le m\le LT/e+1$ and $\ln(ex)\ge1$.  Adding the two bounds
proves the lemma.
\end{proof}

\begin{lemma}[The profile is uniformly negative]\label{lem:profile}
$\displaystyle\sup_{x\ge1}F(x)=F(1)=\frac{\ln(3+2\sqrt2)}{2\pi}
-\frac1e=-0.0873\ldots$, and
\[
  F(x)\;\le\;-\Bigl(\frac1e-\frac1\pi\Bigr)(1+\ln x)
  \;\le\;-0.0495\,(1+\ln x)
  \qquad(x\ge\sqrt2).
\]
\end{lemma}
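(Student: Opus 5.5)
The proof treats the two ranges of the closed form of $\bar g$ from Lemma~\ref{lem:potential} separately. The linear bound for $x\ge\sqrt2$ is too weak to show $\sup F=F(1)$, so that part needs a finer analysis of the exact profile.

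\emph{The range $1\le x\le\sqrt2$.} Here $\bar g(x)=x\,c$ with $c=\ln(3+2\sqrt2)\approx1.7627$, so
\[
F(x)=\frac{c}{2\pi}\,x-\frac1e\,(1+\ln x),
\qquad
F''(x)=\frac{1}{ex^2}>0 .
\]
Since $F$ is convex on this interval, its maximum is at an endpoint. We have $F(1)=c/(2\pi)-1/e=-0.0873\ldots$. At the other end, $F(\sqrt2)=\sqrt2\,c/(2\pi)-(1+\tfrac12\ln2)/e\approx-0.099$. Both values are elementary to evaluate with explicit rational enclosures, and the comparison gives $\max_{[1,\sqrt2]}F=F(1)$.

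\emph{The linear bound for $x\ge\sqrt2$.} Lemma~\ref{lem:potential} gives $\bar g(x)\le2+2\ln x$, hence
\[
F(x)\le\frac{1+\ln x}{\pi}-\frac{1+\ln x}{e}=-\Bigl(\frac1e-\frac1\pi\Bigr)(1+\ln x).
\]
The numerical constant follows from $1/e-1/\pi=0.04957\ldots\ge0.0495$.

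\emph{Comparison with $F(1)$ on $[\sqrt2,\infty)$.} This is the main obstacle: at $x=\sqrt2$ the linear bound only gives about $-0.067$, which is larger than $F(1)$. I will therefore work with the exact profile $\bar g(x)=(x+1)\ln(x+1)-(x-1)\ln(x-1)$ on a compact range and use the linear bound only far out.

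\begin{enumerate}
\item \emph{Far range.} Once $-0.0495(1+\ln x)\le F(1)$, i.e.\ for $\ln x\ge0.0873/0.0495-1\approx0.764$ (so $x\ge2.15$ suffices), the linear bound already gives $F(x)\le F(1)$.
\item \emph{Shape on $[\sqrt2,2.15]$.} Here
\[
F'(x)=\frac{1}{2\pi}\ln\frac{x+1}{x-1}-\frac1{ex}.
\]
Setting $y=1/x$, the equation $F'=0$ becomes $\operatorname{artanh}(y)/y=\pi/e$. The left side is increasing in $y$, so $F'$ has a single zero $x_0\approx1.6$. $F$ increases on $[\sqrt2,x_0]$ and decreases afterwards.
\item \emph{Value at the critical point.} It remains to bound $F(x_0)$. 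I will bracket $x_0$ in a short interval, say $[1.55,1.65]$, by evaluating the sign of $F'$ at the endpoints. On that interval I bound $F$ using monotonicity of each term: $\bar g$ is increasing and $\ln(ex)$ is increasing, so $F(x)\le\bar g(1.65)/(2\pi)-\ln(1.55e)/e$ for $x$ in the bracket. This gives a value near $-0.095$, below $F(1)=-0.0873$.
\end{enumerate}

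All numerical inequalities involved are a handful of explicit logarithm evaluations and can be certified with a few digits of interval arithmetic.
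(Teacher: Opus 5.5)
Your route is the paper's: convexity on $[1,\sqrt2]$, the bound $\bar g\le2+2\ln x$ for the logarithmic decay, and unimodality of $F$ on $[\sqrt2,\infty)$ via monotonicity of $\mathrm{artanh}(y)/y$. That last function is the paper's $H(x)=2ex\,\mathrm{artanh}(1/x)$ in other variables. Your far-range step is harmless but not needed, since $F'<0$ on all of $(x_0,\infty)$.

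There is a genuine gap in step 3, which is the only delicate part of the lemma. The numbers there are wrong.

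\emph{The bracket misses the critical point.} At $x=1.65$ one has $\mathrm{artanh}(y)/y\approx1.1594>\pi/e\approx1.1557$, so $F'(1.65)>0$. In fact $x_0\approx1.665$, so the bracket $[1.55,1.65]$ does not contain it; the paper uses $(1.6,1.7)$. Your own endpoint sign check would reveal this.

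\emph{The one-piece bound is too weak.} The monotone-term bound over a width-$0.1$ interval does not give ``about $-0.095$''. On your bracket, $\bar g(1.65)/(2\pi)-\ln(1.55e)/e\approx0.4556-0.5291=-0.0735$. On the correct bracket, $\bar g(1.7)/(2\pi)-\ln(1.6e)/e\approx-0.0742$. Both exceed $F(1)\approx-0.0873$, so the comparison fails. The value near $-0.095$ you quote is roughly $F(x_0)\approx-0.0965$ itself, not the bound you proposed.

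The obstruction is quantitative. The margin $F(1)-F(x_0)$ is only about $0.009$. Near $x_0$, both $\bar g'(x)/(2\pi)$ and $1/(ex)$ are about $0.22$ (they are equal at $x_0$). Hence the bound $\bar g(b)/(2\pi)-\ln(ea)/e$ exceeds $F$ on $[a,b]$ by roughly $0.22\,(b-a)$. You therefore need pieces of width below about $0.04$. The paper splits $[1.6,1.7]$ into four pieces of width $0.025$ and gets the bounds $-0.0908,\dots,-0.0911$.

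Alternatively, you can avoid subdividing by using concavity. One has $F''(x)=\frac{1}{ex^{2}}-\frac{1}{\pi(x^{2}-1)}$, which is negative for $x^{2}<\pi/(\pi-e)$, i.e.\ on $[\sqrt2,2.72]$. The tangent line at $1.6$, together with $x_0\le1.7$ (from $F'(1.7)<0$) and $F'(1.6)>0$, gives
$F(x_0)\le F(1.6)+0.1\,F'(1.6)\approx-0.0966+0.0003<F(1)$.
Either repair completes your argument. As written, step 3 does not establish $\sup_{x\ge1}F=F(1)$.
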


\begin{proof}
\emph{On $[1,\sqrt2]$.}  Here $F(x)=\frac{\kappa}{2\pi}x
-\frac{1+\ln x}{e}$ with $\kappa=\ln(3+2\sqrt2)$, so
$F''(x)=1/(ex^{2})>0$: $F$ is convex and attains its maximum at an
endpoint.  Numerically $F(1)=-0.08733\ldots$ and
$F(\sqrt2)=-0.09862\ldots$, so the maximum is $F(1)$.

\emph{On $[\sqrt2,\infty)$.}  Now
$F'(x)=\frac1{2\pi}\ln\frac{x+1}{x-1}-\frac1{ex}
=\frac{H(x)-2\pi}{2\pi e x}$ with
$H(x)=2ex\,\mathrm{artanh}\frac1x$.  Since
$\mathrm{artanh}\frac1x=\sum_{k\ge0}\frac{x^{-2k-1}}{2k+1}
<\sum_{k\ge0}x^{-2k-1}=\frac{x}{x^{2}-1}$, we get
$H'(x)=2e\bigl(\mathrm{artanh}\frac1x-\frac{x}{x^{2}-1}\bigr)<0$: $H$
is strictly decreasing, from $H(\sqrt2)=e\sqrt2\,\kappa=6.776>2\pi$ to
$\lim_{x\to\infty}H=2e=5.437<2\pi$.  Hence $F$ increases up to the
unique point $x_1$ with $H(x_1)=2\pi$ and decreases afterwards.  From
$H(1.6)=6.377>2\pi>6.238=H(1.7)$ we get $x_1\in(1.6,1.7)$.  Since
$\bar g$ and $\ln$ are increasing, on any subinterval $[a,b]$ one has
$F\le\bar g(b)/2\pi-(1+\ln a)/e$; applying this on $[1.6,1.625]$,
$[1.625,1.65]$, $[1.65,1.675]$, $[1.675,1.7]$ gives the four bounds
$-0.0908$, $-0.0909$, $-0.0910$, $-0.0911$, so
$F(x_1)\le-0.0908<F(1)$.  Together with the monotonicity on
$[\sqrt2,1.6]$ and $[1.7,\infty)$ this proves $\sup_{x\ge1}F=F(1)$.

\emph{Logarithmic decay.}  For $x\ge\sqrt2$,
Lemma~\ref{lem:potential} gives $\bar g(x)\le2+2\ln x$, whence
$F(x)\le\frac{2+2\ln x}{2\pi}-\frac{1+\ln x}{e}
=-(\frac1e-\frac1\pi)(1+\ln x)$, and
$\frac1e-\frac1\pi=0.04957\ldots>0.0495$.
\end{proof}

\subsection{Proof of the theorem}

\begin{proof}[Proof of Theorem~\ref{thm:main-intro}]
Write $\theta=-2F(1)=0.1746\ldots$, so that $2LTF(1)=-\theta LT$.
Split the zeros $\gamma=x_\gamma T>T$ at $x_\gamma=e$.

\emph{Near zone ($1<x_\gamma\le e$).}  By
Lemmas~\ref{lem:pointwise}--\ref{lem:profile}, each term satisfies
$2\ln|\fhat_0(\gamma)|\le2LTF(1)+O(L^{2})$, and by \eqref{eq:RvM} the
number of such zeros is $N(eT)=O(T\ln T)=e^{O(L)}$.  Hence
\[
  \sum_{T<\gamma\le eT}2\,\fhat_0(\gamma)^{2}
  \;\le\;e^{-\theta LT+O(L^{2})}.
\]

\emph{Far zone ($x_\gamma>e$).}  By Lemma~\ref{lem:profile} and
$1+\ln x\ge2$ there,
\[
  2LTF(x)+CL(L+\ln2x)\;\le\;-0.099\,LT\,(1+\ln x)+CL(L+\ln 2x)
  \;\le\;-\theta LT-\tfrac{1}{50}LT\ln x
\]
for all large $L$, because $0.099(1+\ln x)\ge0.198\ge\theta+0.02$ and
$LT\gg L$ absorbs the error term.  Summing over dyadic blocks
$e2^{k}T<\gamma\le e2^{k+1}T$, which contain $O(2^{k}T(L+k))$ zeros
each,
\[
  \sum_{\gamma>eT}2\,\fhat_0(\gamma)^{2}
  \;\le\;2\,e^{-\theta LT}\sum_{k\ge0}O\bigl(2^{k}T(L+k)\bigr)\,
  e^{-\frac{1}{50}LT(1+k\ln2)}
  \;\le\;e^{-\theta LT+O(L)} .
\]

Combining the two zones, $Q(f_0)\le e^{-\theta LT+O(L^{2})}$.  With
Lemma~\ref{lem:mass},
\[
  \lmin(L)\;\le\;\frac{Q(f_0)}{\|f_0\|_2^2}
  \;\le\;4\,e^{-\theta LT+O(L^2)}
  \;=\;\exp\Bigl(-\bigl(\tfrac{4\pi}{e}-2\ln(3+2\sqrt2)\bigr)
  L\,e^{L}+O(L^{2})\Bigr),
\]
where $\theta\cdot2\pi=\frac{4\pi}{e}-2\ln(3+2\sqrt2)
=1.0974\ldots>1$.  Hence $\lmin(L)\le\exp(-Le^{L})$ for all
$L\ge L_0$.
\end{proof}

\begin{remark}[Sharp form of the bound]\label{rem:sharp}
The proof gives the slightly stronger statement
\[
  -\ln\lmin(L)\;\ge\;
  \Bigl(\frac{4\pi}{e}-2\ln(3+2\sqrt2)\Bigr)L\,e^{L}\,
  \bigl(1+O(L\,e^{-L})\bigr),
  \qquad
  \frac{4\pi}{e}-2\ln(3+2\sqrt2)=1.0974\ldots
\]
The two constants $\ln(3+2\sqrt2)$ (the potential cost of the product
at the edge, Lemma~\ref{lem:potential}) and $1/e$ (the optimal decay
rate of a power-of-sinc window per unit of type) are both forced by
the construction; improving either requires a genuinely different
window, cf.\ Remark~\ref{rem:obstruction}.
\end{remark}

\begin{remark}[The obstruction at full saturation]\label{rem:obstruction}
Running the same computation with $T=2\pi e^{2aL}$ shows that the
construction closes only for $a\lesssim0.65$: at $a=1$ the required
decay rate of the window over $[T,2T]$ exceeds what powers of sinc
achieve, by a factor of about $1.5$.  Reaching the true exponent
$e^{2L}$ requires the optimal trade-off between suppression and decay
for band-limited functions, which is exactly the regime described by
the Landau--Widom asymptotics, and whose $\pi^{2}/\ln$ plunge rate
appears as the constant of Conjecture~\ref{conj:law}.  Turning that
conjecture into a theorem thus appears to be a problem about
concentration operators on unions of many intervals, which we leave
open.  In the reverse direction, a lower bound of the form
$\lmin(L)\ge e^{-CN(\Tstar)}$ valid unconditionally for all $L$ would
imply RH, so the realistic target is the conditional statement.
\end{remark}

\section{The barrier}\label{sec:barrier}

We can now prove Theorem~\ref{thm:barrier} and substantiate
Remarks~\ref{rem:barrier-rate} and~\ref{rem:barrier-interp}.

\subsection{Proof of Theorem~\ref{thm:barrier}}
$\Tsharp>T_1=2\pi e^{A_L}$ is forced in
Theorem~\ref{thm:reduction} because $\beta^*>0$ requires it.  By
Lemma~\ref{lem:sharp} the supremum of the comb equals $A_L$ exactly,
so no pointwise bound on the comb can do better.  By the PNT,
$A_L=\sum_{n<e^{2L}}\frac{2\Lambda(n)}{\sqrt n}\sim4e^L$, so
$T_1=2\pi\exp\bigl((4+o(1))e^L\bigr)$: the certificate size is doubly
exponential in the support.  This part is unconditional.

\subsection{The measured margin (Remark~\ref{rem:barrier-rate})}
The margin that any certificate must resolve is the floor itself, and
its decay is now a measured law rather than folklore: by
Table~\ref{tab:main} and \eqref{eq:law}, the window floor collapses at
the Landau--Widom rate $\exp(-2\pi^2N(\Tstar)/\ln N(\Tstar))$ with
$\Tstar=2\pi e^{2L}$, falling through $17$ orders of magnitude at
$L=0.8$, $48$ at $L=1.2$ and $283$ at $L=2$.  The reduced form $R$
tracks the window floor closely (Section~\ref{sec:synthesis}).  The gap
between $\lammin(R)$ and $\lmin$ is governed by the artificial well set
$W_{\Tsharp}=\{t\le\Tsharp:\Psi_L(t)<\beta^*\}$ created by the
splitting: band-limited functions can concentrate on $W_{\Tsharp}$, and
each Shannon mode $\frac{2L|W_{\Tsharp}|}{2\pi}$ of the well set costs
a definite fraction of a decade of floor.  At $L=0.8$ we measure a
Shannon load $\sim5$ against a gap of one decade, so about $0.2$ decade
per mode; the load is $\sim20$--$25$ in the exploratory $L=1.19$ run of
Section~\ref{sec:L119}.  The logical status here is that the law
\eqref{eq:law} is established for certified upper bounds only.
Converting it into a proved lower bound on the cost of any positivity
certificate would require the sampling theory of low zeta zeros, the
window floor being twice the squared smallest singular value of a
low-zero sampling operator, and that remains open.

\subsection{Interpretation (Remark~\ref{rem:barrier-interp})}
The two exponential scales meet head on.  A certificate of the
one-stroke type must reach the comb-alignment threshold
$T_1=2\pi e^{(4+o(1))e^L}$, because at $t$ slightly beyond $T_1$ the
symbol's lower envelope is barely positive and the certificate must
verify that band-limited functions cannot exploit near-alignments of
$(t\log p)_p$.  Below $T_1$ such alignments do null the symbol; above
$T_1$ they cannot.  But proving a sharper threshold than $T_1$, that
is, proving quantitatively that the comb phases avoid simultaneous
alignment, is a statement about linear forms in $\{\log p\}$, and that
is the same species of information as zero-density estimates encode.
Meanwhile the quantity to be certified collapses at the Landau--Widom
rate in the resolution height $\Tstar=2\pi e^{2L}$.  Certifying
positivity past support $\approx3.2$ ($T_1\approx10^7$) by any
pointwise-envelope method is therefore computationally void.  In this
sense the positivity route, continued past the wall, runs back into the
arithmetic of the zeros it was trying to circumvent.

\section{Failed routes (for the record)}\label{sec:failures2}

Three natural alternatives to Theorem~\ref{thm:reduction} were
implemented and abandoned; we record them because their failure modes
motivate the frequency-space formulation.  (They are the geometric-side
counterparts of the float64 pathologies of
Section~\ref{sec:failures}.)
(1) \emph{Galerkin solve-lemmas} (certify $Q\succeq0$ on a window,
lift by solving $Gx=\varphi$): the inverses are rough in $L^2[-L,L]$,
Galerkin residuals decay only polynomially, and the required
$10^{-80}$ tolerances are unreachable.
(2) \emph{Symbol capping} (replace $\Psi_L$ beyond $T_\delta$ by a
constant minorant): provably fatal, since once the window resolves the
cap the form becomes indefinite (measured $\lammin\approx-2.7$).
Capping the symbol amounts to admitting fake zeros.
(3) \emph{Order-space block certificates} (exact window block + tail
bounds + Schur coupling): the prime terms are shift operators, so any
hard cut in Legendre-order space leaves $O(1)$ coupling across the
interface; the collective edge directions lose $\approx-2.4$ against a
supply that grows only logarithmically.
(4) \emph{Per-prime envelope sharpening}: grouping the comb by primes
sharpens its minimum but cannot lower its maximum, which equals $A_L$
exactly (Lemma~\ref{lem:sharp}).  An earlier draft used the sharpened
constant in the envelope, which is the wrong direction, and the
resulting support-$2.38$ claim is retracted in
Section~\ref{sec:L119}.  The error is an easy one to make, since the
well depths of $\Psi_L$, governed by $\Aeff$, and its high-frequency
floor, governed by $A_L$, are two different extremes of the same comb.
The frequency split, by contrast, has no interface and only
super-exponentially small coupling.  It appears to be the right
formulation, and we suspect, without proving, that it is essentially
unique in this respect.

\section{Discussion}\label{sec:discussion}

\textbf{Interpretation.}  The profile $\lmin(L)$ quantifies the
difficulty of approaching RH through compactly supported test
functions, and this paper determines it from both sides.  An
unconditional certificate pushes the positive-side frontier from
$\log2$ to $1.6$, while certified variational bounds show that the
margin does not merely shrink exponentially but collapses at the
Landau--Widom rate $\exp(-2\pi^{2}N/\ln N)$ in the number $N$ of zeros
resolved by the window.  Any finite-dimensional positivity argument
that operates within a window $[-L,L]$, in particular any truncation in
the Connes--Consani prolate program \cite{ConnesConsani2021}, has to
certify a quantity this small.  Conversely, a spectral construction
whose positivity margin decayed more slowly than \eqref{eq:law} on
windows would contradict our variational upper bounds.

\textbf{The constant $2\pi^2$.}  Its appearance links the arithmetic
variational problem \eqref{eq:lambdastar} to the spectral theory of
time--band limiting in a quantitative way that, to our knowledge, is
new.  It is tempting to read \eqref{eq:law} as saying that near the
resolution height the zeros of $\zeta$ behave, as far as band-limited
suppression is concerned, like a generic saturated sampling set, the
arithmetic entering only through the density $\nu$ and hence through
$N(\Tstar)$, with the constant universal.

\textbf{The two heights.}  The synthesis of the two halves isolates a
structural asymmetry of the Weil form on windows.  Defeating positivity,
that is, finding small Rayleigh quotients, is governed by the resolution
height $\Tstar=2\pi e^{2L}$, singly exponential, because the minimizer
only needs to interpolate the zeros it can resolve.  Certifying
positivity through the geometric side is governed by the alignment
threshold $T_1=2\pi e^{(4+o(1))e^{L}}$, doubly exponential, because the
certificate must exclude worst-case phase alignment of the prime comb,
and by Lemma~\ref{lem:sharp} no pointwise envelope can do better.  The
gap between $e^{2L}$ and $e^{4e^L}$ is the gap between knowing where
the zeros are and proving that the primes cannot conspire.  Closing it
is an arithmetic problem, concerning simultaneous Diophantine
properties of $\{\log p\}$, and not a computational one.

\textbf{Open problems.}  (i) Prove Conjecture~\ref{conj:law},
conditionally on RH, presumably via multi-interval Landau--Widom
asymptotics; the upper-bound half with some constant follows if the
window obstruction of Remark~\ref{rem:obstruction} can be resolved.
(ii) Identify the $O(1/\ln N)$ correction, for which our data at
$L=1.4$--$2.0$ already provide constraints, the last column of
Table~\ref{tab:main} reading $20.02,20.14,20.26,20.12$ there.
(iii) Prove a lower bound on the cost of any window positivity
certificate, not just of pointwise-envelope ones, via
the sampling theory of low zeta zeros.  (iv) Repeat both experiments
for Dirichlet $L$-functions: the law \eqref{eq:law} should hold with
the corresponding zero-counting function if the mechanism is
universal, and the one-stroke reduction applies verbatim with the
comb of the corresponding conductor.  (v) Extend the certificates.  A
valid support-$2.38$ certificate needs $\Tsharp\approx1.2\times10^4$
and $N\approx2\times10^4$ (Section~\ref{sec:L119}), which is compiled
double-double territory, unless the interval route sketched there,
certifying $\min\Psi_L$ directly on the finite range where the
envelope is pessimistic, brings $\Tsharp$ back toward the resolution
scale.  (vi) On the upper-bound side, at $L=2.4$ the predicted bound
is $-\ln\lmin\approx1.5\times10^{3}$ (precision $\sim800$ digits,
dimension $\sim2\times10^{3}$), within reach of a few core-days with
GMP-backed arithmetic.

\section{Reproducibility}\label{sec:repro}

Both pipelines run on a single machine in Python~3 with
\texttt{mpmath} \cite{mpmath} (with the \texttt{gmpy2} backend for the
largest runs).  Lower-bound certification is $\approx40$ CPU-hours in
total, plus about two CPU-hours for the parity-sector runs of
Section~\ref{sec:parity}.  On the upper-bound side, the geometric
certificate at $L\le1.4$ finishes in under two minutes per point; the
$L=2.0$ run ($365$-digit geometric arithmetic, $549$ odd modes) is the
bottleneck.

For the certified claims the accompanying files are part of the proof
rather than a courtesy.  Source for both pipelines, software versions,
zero data, certified matrices and Cholesky factors, run logs, error
budgets and SHA-256 hashes are provided as supplementary material,
together with an independent verifier (\texttt{verify\_certificate.py},
depending on \texttt{mpmath} alone) that re-factorises an archived
matrix, recomputes the Cholesky residual in doubled precision, and
re-derives the quadrature budget $c_{\rm err}$ and the tail constants
$\varepsilon_D,\varepsilon_B$ of \eqref{eq:twoblock}.  On the archived
$M_{200}$ it returns $Q(f)\ge9.0\times10^{-18}\|f\|_2^2$
(the $8.9\times10^{-18}$ of Theorem~\ref{thm:L08} rounds the budgets
down).


\begin{thebibliography}{99}

\bibitem{Weil1952}
A.~Weil,
\emph{Sur les ``formules explicites'' de la th\'eorie des nombres
premiers},
Comm.\ S\'em.\ Math.\ Univ.\ Lund, Tome Suppl\'ementaire (1952),
252--265.

\bibitem{Barner1981}
K.~Barner,
\emph{On A. Weil's explicit formula},
J.~reine angew.\ Math.\ \textbf{323} (1981), 139--152.

\bibitem{Yoshida1992}
H.~Yoshida,
\emph{On Hermitian forms attached to zeta functions},
in: Zeta Functions in Geometry, Adv.\ Stud.\ Pure Math.\ \textbf{21},
1992, 281--325.

\bibitem{Bombieri2000}
E.~Bombieri,
\emph{Remarks on Weil's quadratic functional in the theory of prime
numbers, I},
Atti Accad.\ Naz.\ Lincei Cl.\ Sci.\ Fis.\ Mat.\ Natur.\ Rend.\ Lincei
(9) Mat.\ Appl.\ \textbf{11} (2000), 183--233.

\bibitem{Connes1999}
A.~Connes,
\emph{Trace formula in noncommutative geometry and the zeros of the
Riemann zeta function},
Selecta Math.\ (N.S.) \textbf{5} (1999), 29--106.

\bibitem{ConnesConsani2021}
A.~Connes and C.~Consani,
\emph{Weil positivity and trace formula, the archimedean place},
Selecta Math.\ (N.S.) \textbf{27} (2021), no.~4, Paper No.~77.

\bibitem{ConnesMoscovici2022}
A.~Connes and H.~Moscovici,
\emph{The UV prolate spectrum matches the zeros of zeta},
Proc.\ Natl.\ Acad.\ Sci.\ USA \textbf{119} (2022), no.~22,
e2123174119.

\bibitem{ConnesVanSuijlekom2025}
A.~Connes and W.~D.~van Suijlekom,
\emph{Quadratic forms, real zeros and echoes of the spectral action},
preprint, arXiv:2511.23257 (2025).

\bibitem{CCMZeta2025}
A.~Connes, C.~Consani and H.~Moscovici,
\emph{Zeta spectral triples},
preprint, arXiv:2511.22755 (2025).

\bibitem{Suzuki2026}
M.~Suzuki,
\emph{Weil's quadratic form via the screw function},
preprint, arXiv:2606.09096 (2026).

\bibitem{Groskin2026}
A.~Groskin,
\emph{A finite Guinand--Weil dictionary and archimedean tail order for
the truncated Weil quadratic form},
preprint, arXiv:2607.02828 (2026).

\bibitem{SlepianPollak1961}
D.~Slepian and H.~O.~Pollak,
\emph{Prolate spheroidal wave functions, Fourier analysis and
uncertainty, I},
Bell System Tech.\ J.\ \textbf{40} (1961), 43--63.

\bibitem{LandauWidom1980}
H.~J.~Landau and H.~Widom,
\emph{Eigenvalue distribution of time and frequency limiting},
J.\ Math.\ Anal.\ Appl.\ \textbf{77} (1980), 469--481.

\bibitem{RodgersTao2020}
B.~Rodgers and T.~Tao,
\emph{The de Bruijn--Newman constant is non-negative},
Forum Math.\ Pi \textbf{8} (2020), e6.

\bibitem{Polymath15}
D.~H.~J. Polymath,
\emph{Effective approximation of heat flow evolution of the Riemann
$\xi$ function, and a new upper bound for the de Bruijn--Newman
constant},
Res.\ Math.\ Sci.\ \textbf{6} (2019), no.~31.

\bibitem{CCM2013}
E.~Carneiro, V.~Chandee and M.~B.~Milinovich,
\emph{Bounding $S(t)$ and $S_1(t)$ on the Riemann Hypothesis},
Math.\ Ann.\ \textbf{356} (2013), 939--968.

\bibitem{IK2004}
H.~Iwaniec and E.~Kowalski,
\emph{Analytic Number Theory},
AMS Colloq.\ Publ.\ \textbf{53}, 2004.

\bibitem{Rump2010}
S.~M.~Rump,
\emph{Verification methods: rigorous results using floating-point
arithmetic},
Acta Numerica \textbf{19} (2010), 287--449.

\bibitem{Titchmarsh1986}
E.~C.~Titchmarsh,
\emph{The Theory of the Riemann Zeta-function},
2nd ed., revised by D.~R.~Heath-Brown, Oxford University Press, 1986.

\bibitem{Seip2004}
K.~Seip,
\emph{Interpolation and Sampling in Spaces of Analytic Functions},
University Lecture Series \textbf{33}, American Mathematical Society,
2004.

\bibitem{Davenport2000}
H.~Davenport,
\emph{Multiplicative Number Theory},
3rd ed., revised by H.~L.~Montgomery, Graduate Texts in Mathematics
\textbf{74}, Springer, 2000.

\bibitem{PlattTrudgian2021}
D.~Platt and T.~Trudgian,
\emph{The Riemann hypothesis is true up to $3\cdot10^{12}$},
Bull.\ Lond.\ Math.\ Soc.\ \textbf{53} (2021), 792--797.

\bibitem{mpmath}
F.~Johansson et al.,
\emph{mpmath: a Python library for arbitrary-precision floating-point
arithmetic}, \url{https://mpmath.org}.

\end{thebibliography}
\end{document}